\documentclass[a4paper,11pt,reqno]{amsart}
\usepackage{amsthm}
\usepackage{amsmath}
\usepackage{enumerate}
\usepackage{amsfonts}
\usepackage{amssymb}
\usepackage{fullpage}
\usepackage{amsmath,amscd}
\usepackage{graphicx}
\usepackage{array}
\usepackage{amsmath,amssymb,amsfonts}
\usepackage[utf8]{inputenc} 
\usepackage[T1]{fontenc}
\usepackage[english,french]{babel}

   \usepackage{gfsartemisia}

 \usepackage{xspace}
\usepackage{xparse}
\usepackage{graphicx}
\usepackage{float}
\usepackage{pdfpages}
\usepackage[a4paper, margin = 3cm, bottom = 3cm]{geometry}
\usepackage{ifpdf}
\usepackage{marginnote}
\usepackage{hyperref}
\usepackage{tikz}
\usepackage{csquotes}
\usetikzlibrary{calc,matrix,arrows,shapes,decorations.pathmorphing,decorations.markings,decorations.pathreplacing}
\hypersetup{pdfborder=0 0 0, 
	    colorlinks=true,
	    citecolor=black,
	    linkcolor=blue,
	    urlcolor=black,
	    pdfauthor={Guillaume Tahar,Quentin Gendron}
	   }

\usepackage[
	    style=alphabetic,
            isbn=false,
            doi=false,
            url=false,
            backend=bibtex, 
            maxnames = 5,
            sorting=nty,
           ]{biblatex}
\defbibheading{bibliography}[\bibname]{%
  \section*{Références}%
}

\DeclareFieldFormat[article]{title}{{\it #1}}
\DeclareFieldFormat{journaltitle}{{\rm #1}}
\renewbibmacro{in:}{%
  \ifentrytype{article}{}{\printtext{\bibstring{in}\intitlepunct}}}
  \bibliography{res_biblio}

\newcommand{\RR}{\mathbf{R}}

\newcommand{\CC}{\mathbf{C}}
\newcommand{\NN}{\mathbf{N}}
\newcommand{\ZZ}{\mathbf{Z}}
\newcommand{\PP}{{\mathbb{P}}}

\newcommand{\pgcd}{{\rm pgcd}}

\newcommand{\ord}{\operatorname{ord}\nolimits}

\renewcommand{\tilde}{\widetilde}

\newcommand{\rot}{{\rm rot}} 
\newcommand{\ind}{{\rm Ind}}

\newcommand\Res{\operatorname{Res}}

\newcommand{\appres}[1][g]{\mathfrak{R}_{#1}}
\DeclareDocumentCommand{\appresk}{ O{g} O{k}}{\mathfrak{R}_{#1}^{#2}}
\newcommand{\espres}[1][g]{\mathcal{R}_{#1}}
\DeclareDocumentCommand{\espresk}{O{g} O{k}}{\mathcal{R}_{#1}^{#2}}

\newcommand{\barmoduli}[1][g]{{\overline{\mathcal M}}_{#1}}

\newcommand{\omoduli}[1][g]{{\Omega\mathcal M}_{#1}}



\newcommand{\Weierstrass}{Weierstra\ss{}}

\def\={\;=\;} 
\newtheorem{thm}{Théorème}[section]
\newtheorem{cor}[thm]{Corollaire}
\newtheorem{prop}[thm]{Proposition}
\newtheorem{lem}[thm]{Lemme}

\theoremstyle{definition}
\newtheorem{defn}[thm]{Définition}
\theoremstyle{remark}
\newtheorem{rem}[thm]{Remarque}
\theoremstyle{definition}

\theoremstyle{definition}
\newtheorem{ex}[thm]{Exemple}
\theoremstyle{definition}

\theoremstyle{definition}

\numberwithin{equation}{section} 
\pagestyle{plain} 

  \usepackage{gfsartemisia}

\title{différentielles abéliennes à singularités prescrites}
%

\author{Quentin Gendron}
\address[Quentin Gendron]{Centro de Investigación en Matemáticas, Guanajuato, Gto.,  AP 402, CP 36000, México}
\email{quentin.gendron@cimat.mx}
\author{Guillaume Tahar}
\address[Guillaume Tahar]{Institut de Math{\'e}matiques de Jussieu - UMR CNRS 7586}
\curraddr{Faculty of Mathematics and Computer Science, Weizmann Institute of Science,
Rehovot, 7610001, Israel}
\email{tahar.guillaume@weizmann.ac.il}

\date{\today}
\keywords{Abelian differential, Flat surface, Strata, Residue}

%
\begin{document}

\selectlanguage{english}

\begin{abstract}
The local invariants of a meromorphic Abelian differential on a Riemann surface of genus $g$ are the orders of zeros and poles, and the residues at the poles. The main result of this paper is that with few exceptions, every pattern of orders and residues can be obtain by an Abelian differential. These exceptions are two families in genus zero when the orders of the poles are either all simple or all nonsimple. Moreover, we even show that the pattern can be realized in each connected component of strata. Finally we give consequences of these results in algebraic and flat geometry. The main ingredient of the proof is the flat representation of the Abelian differentials.
\end{abstract}

\selectlanguage{french}


\maketitle
\setcounter{tocdepth}{1}
\tableofcontents

\section{Introduction}

Soient $X$ une surface de Riemann compacte de genre $g$ et $K_{X}$ son fibré en droites canonique. Une {\em différentielle abélienne} de $X$ est une section méromorphe non nulle de $K_{X}$. Localement, elle s'écrit $f(z)dz$, où $f$ est une fonction méromorphe.
Il est bien connu (voir par exemple \cite[Encadré~III.2]{dSG}) que les invariants en un point~$P$ d'une différentielle abélienne~$\omega$ sont l'{\em ordre} de la différentielle en~$P$ et le {\em résidu $\Res_{P}(\omega)$}  dans le cas où~$P$ est un pôle de $\omega$. 

Ces invariants ne sont pas arbitraires mais vérifient certaines propriétés. Tout d'abord le résidu d'un pôle simple est toujours non nul.  Ensuite, la somme des ordres des zéros et des pôles d'une différentielle est égale à $2g-2$. Enfin la somme des résidus  s'annule. Réciproquement, on peut se demander s'il existe des différentielles méromorphes possédant des invariants locaux prescris satisfaisant ces relations.
Deux théorèmes célèbres apportent des réponses partielles à ce problème. 
\par
Le premier est le théorème de Riemann-Roch (voir le théorème 9.3 de \cite{reyssat}) qui permet d'obtenir certaines différentielles avec des ordres fixés. Toutefois, ce théorème ne donne aucune information sur les résidus de ces différentielles. De plus, ce résultat ne permet de montrer qu'il existe des différentielles pour tous les ordres que nous pouvons considérer. Par exemple, le fait qu'il existe des différentielles holomorphes dont les ordres sont donnés par une partition quelconque de $2g-2$ à été montré dans \cite{masm} par des méthodes complètement différentes.
\par
Le second résultat est le théorème de Mittag-Leffler (voir la proposition 9.3 de \cite{reyssat}) qui démontre l'existence de différentielles avec l'ordre des pôles et les résidus correspondants imposés. Toutefois, ce théorème ne donne aucune information sur les zéros de ces différentielles.

Dans cet article, nous nous proposons de répondre à la question suivante.
\begin{center}
{\em \'Etant donnés les ordres des zéros et pôles ainsi que les résidus aux pôles,\\ existe-t-il une surface de Riemann avec une différentielle possédant ces invariants locaux?}
\end{center}

\smallskip
\par
\subsection{Définitions et résultats principaux}
Afin de préciser la question précédente, nous introduisons un certain nombre de notions. Nous désignerons par 
$$\mu:=(a_{1},\dots,a_{n};-b_{1},\dots,-b_{p};\underbrace{-1,\dots,-1}_{s})$$ une partition de $2g-2$  où les $a_{i}$ sont des nombres strictement positifs et les $b_{i}$ sont supérieurs ou égaux à $2$. 
 La {\em strate} $\omoduli(\mu)$ paramètre les différentielles de type~$\mu$. Les strates (non vides) de différentielles sont des orbifoldes de dimension $2g-1+n$. 
 Le théorème des résidus implique que les strates $\omoduli(a_{1},\dots,a_{n};-1)$ sont vides. Dans la suite nous considérerons les partitions $\mu$  de $2g-2$ qui ne sont pas de la forme $(a_{1},\dots,a_{n};-1)$. Notons qu'en général, les strates ne sont pas connexes et leurs composantes connexes  ont été classifiées par \cite{Bo} dans le cas des  différentielles méromorphes. 
 
 Soit $\omoduli(\mu)$ une strate abélienne, on définit l'{\em espace résiduel de type $\mu$} par 
 \begin{equation}
\espres(\mu) := \left\{ (r_{1},\dots,r_{p+s})\in \CC^{p}\times(\CC^{\ast})^{s}:\ \sum_{i=1}^{p+s} r_{i}=0 \right\}.
\end{equation}
Cet espace paramètre l'ensemble des configurations de résidus que peut prendre une différentielle de $\omoduli(\mu)$.
Soit $\{P_{i}\}_{i\in\{1,\dots,p+s \}}$ l'ensemble des pôles de $\omega$, l'{\em application résiduelle} est 
\begin{equation}
\appres(\mu) \colon \omoduli(\mu) \to \espres(\mu):\ (X,\omega) \mapsto (\Res_{P_{i}}(\xi))_{i \in \{1,\dots,p+s\} }\,.
\end{equation}
Nous décrivons maintenant l'image de l'application résiduelle de chaque composante connexe des strates de différentielles abéliennes. Tout d'abord dans le cas des différentielles sur les surfaces de Riemann de genre $g\geq1$.
\begin{thm}\label{thm:ggeq1abel}
Soit $\omoduli(\mu)$ une strate de différentielles abéliennes avec $g\geq1$. La restriction à chaque composante connexe de $\omoduli(\mu)$ de l'application résiduelle $\appres(\mu)\colon\omoduli(\mu) \to \espres(\mu)$  est surjective.
\end{thm}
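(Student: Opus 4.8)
\emph{Esquisse de la stratégie proposée.} L'idée directrice, conforme à l'annonce de l'introduction, est d'exploiter la représentation plate des différentielles. Une paire $(X,\omega)\in\omoduli(\mu)$ s'interprète comme une surface plate à singularités coniques: un zéro d'ordre $a_i$ devient un point conique d'angle $2\pi(a_i+1)$, tandis que chaque pôle correspond à un bout infini. Le résidu en un pôle $P_i$ est, à un facteur $2\mathrm{i}\pi$ près, la période $\oint_{\gamma_i}\omega$ le long d'un petit lacet autour de $P_i$. Le but est donc de construire, dans une composante connexe fixée de $\omoduli(\mu)$, une surface plate dont les bouts réalisent un vecteur de résidus $R=(r_1,\dots,r_{p+s})\in\espres(\mu)$ prescrit arbitrairement. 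L'unique contrainte $\sum_i r_i=0$ définissant $\espres(\mu)$ n'est autre que le théorème des résidus, automatiquement satisfaite par toute construction plate fermée.

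Je commencerais par fixer les blocs de construction locaux au voisinage des pôles. Un pôle simple de résidu $r\neq0$ se modélise par un demi-cylindre infini de circonférence $2\pi\lvert r\rvert$, sa direction encodant l'argument de $r$. Un pôle d'ordre $b\geq2$ admet quant à lui un domaine polaire plat réalisant \emph{n'importe quel} résidu $r\in\CC$ (y compris $r=0$), obtenu en recollant un nombre convenable de demi-plans le long de fentes, le saut de période total valant $2\mathrm{i}\pi\,r$. C'est précisément cette liberté offerte par les pôles d'ordre $\geq2$ qui rend le problème abordable. Ces bouts se relient ensuite à une partie compacte portant le genre et les zéros, en découpant des fentes et en recollant par translation, opérations qui laissent inchangées les périodes des lacets entourant les bouts, donc les résidus.

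Pour couvrir toutes les partitions $\mu$ et tous les genres $g\geq1$, je procéderais par récurrence à partir d'un petit nombre de cas de base explicites, au moyen de chirurgies plates locales et à résidus contrôlés: l'ajout d'une anse (qui augmente $g$ de $1$ sans modifier les résidus), l'éclatement d'un zéro (qui scinde un $a_i$ en préservant $\sum_i a_i$ et ne touche pas aux bouts), ou la création d'une paire de pôles de résidus opposés. Le cas $g\geq1$ est favorable: la présence d'au moins une anse fournit des cycles, donc des périodes libres supplémentaires, qui permettent d'ajuster $R$ même lorsque les pôles sont peu nombreux. On remarquera d'ailleurs que les résidus figurent parmi les coordonnées périodes locales de $\omoduli(\mu)$, soumises à la seule relation linéaire $\sum_i r_i=0$; l'image par $\appres(\mu)$ de toute composante connexe est donc ouverte dans $\espres(\mu)$, et il ne reste qu'à garantir qu'elle est aussi surjective.

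La principale difficulté réside dans le contrôle de la composante connexe. D'après la classification de \cite{Bo}, les composantes de $\omoduli(\mu)$ se distinguent par l'hyperellipticité et, le cas échéant, par la parité de la structure spin; il faut donc suivre ces invariants le long de chaque chirurgie et vérifier qu'à vecteur de résidus fixé on atteint bien chacune d'elles. Les points que j'anticipe comme les plus délicats sont: (a) les composantes hyperelliptiques, où l'involution impose une symétrie des résidus et restreint la souplesse des constructions; (b) le suivi de la parité spin à travers l'ajout d'anses et l'éclatement des zéros; (c) les strates dont tous les pôles sont d'ordre $\geq2$, ou au contraire tous simples, pour lesquelles les mécanismes d'ajustement diffèrent. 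Enfin, les configurations de petit genre et de peu de singularités, où l'espace manque pour effectuer les chirurgies, devront être traitées à la main comme cas initiaux de la récurrence. C'est bien la nécessité de fixer simultanément l'hyperellipticité, la parité et un résidu arbitraire qui constitue le \coeur de la preuve.
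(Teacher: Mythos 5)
Votre boîte à outils est bien celle du papier (représentation plate, parties polaires comme briques locales, couture d'anse, éclatement de zéro) et vous identifiez correctement la difficulté centrale — atteindre chaque composante connexe — mais votre esquisse laisse précisément cette difficulté sans solution, et le mécanisme que vous proposez pour la traiter n'est pas celui qui fonctionne. Le papier ne «~suit~» jamais l'hyperellipticité ni la parité spin le long des chirurgies~: il invoque les propositions 6.1 et 7.1 de \cite{Bo}, selon lesquelles toute composante connexe d'une strate minimale de genre $g\geq1$ s'obtient en cousant une anse à une différentielle de genre $g-1$, et toute composante d'une strate à $n\geq2$ zéros s'obtient en éclatant le zéro d'une différentielle de la strate minimale. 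Ces deux opérations préservant les résidus (propositions~\ref{prop:eclatZero} et~\ref{prop:attachanse}), tout le contrôle des composantes se ramène par récurrence au genre $1$, où la classification de \cite{Bo} se fait par le \emph{nombre de rotation} — invariant que votre texte ne mentionne jamais (l'hyperellipticité et le spin ne classifient les composantes méromorphes qu'en genre $\geq2$). Votre inquiétude (a) est d'ailleurs un faux problème~: les composantes hyperelliptiques de la classification de Boissy ne concernent que des partitions à au plus deux pôles, pour lesquelles la contrainte de symétrie imposée par l'involution coïncide avec celle du théorème des résidus.

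Le trou le plus sérieux est le cas qui constitue le \coeur technique de la preuve et qu'aucun de vos arguments ne peut atteindre~: en genre $1$, pour une strate minimale $\omoduli[1](a;-b_{1},\dots,-b_{p})$ dont tous les pôles sont d'ordre $\geq2$, il faut réaliser le vecteur de résidus \emph{nul} dans chaque composante connexe. Votre argument d'ouverture de l'image est sans force ici (une image ouverte peut très bien éviter un point, et $0$ est fixé par l'action de $\CC^{\ast}$, donc aucun argument d'invariance n'aide)~; la couture d'anse depuis le genre $0$ échoue aussi, puisque le théorème~\ref{thm:geq0keq1} (cas i) montre précisément que l'origine peut être exclue de l'image en genre $0$. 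Le papier traite ce cas par des constructions plates explicites — collages cycliques de parties polaires triviales, plus deux variantes ad hoc pour les strates $\omoduli[1](2p;(-2^{p}))$ — suivies d'un calcul d'indices de lacets montrant que tous les nombres de rotation, donc toutes les composantes, sont réalisés. Sans ce point, et sans la réduction du problème des composantes au genre $1$ via Boissy, votre récurrence ne peut ni démarrer correctement ni conclure.
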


Le cas des différentielles sur la sphère de Riemann est donné dans le résultat le suivant.
\begin{thm}\label{thm:geq0keq1}
Soit $\mu=(a_{1},\dots,a_{n};-b_{1},\dots,-b_{p};-1\dots,-1)$ une partition de $-2$ où $-1$ apparaît~$s$ fois, l'image de l'application résiduelle de la strate $\omoduli[0](\mu)$ est donnée par l'un des cas suivant.
\begin{itemize}
\item[i)]  Si $s=0$ et qu'il existe un indice $i$ tel que
\begin{equation}\label{eq:genrezeroresiduzerofr}
 a_{i}>\sum_{j=1}^{p}b_{j}-(p+1),
\end{equation}
alors l'image de $\appres[0](\mu)$ est $\espres[0](\mu)\setminus\left\{0\right\}$.
\item[ii)] L'image de $\appres[0](a_{1},\dots,a_{n};-1,\dots,-1)$ avec $s\geq2$ est le complémentaire de l'union des plans $\CC^{\ast} \cdot (x_{1},\dots,x_{s_{1}},-y_{1},\dots,-y_{s_{2}})$ où $x_{i},y_{j} \in \NN$ sont premiers entre eux et  
\begin{equation}\label{eq:polessimples}
 \sum_{i=1}^{s_{1}} x_{i} = \sum_{j=1}^{s_{2}} y_{j} \leq \max(a_{1},\dots,a_{n})\,.
\end{equation}
\item[iii)] Dans les autres cas, l'application résiduelle $\appres[0](\mu)$ est surjective.
\end{itemize}
\end{thm}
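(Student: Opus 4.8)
The plan is to treat the two inclusions separately: first to pin down the algebraic obstructions responsible for the excluded configurations, and then to realise every remaining configuration by a flat-geometric construction. The obstruction part is short and elementary; the realisation part is the technical core, and it is there that the flat representation of the differentials is decisive.

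For the obstructions, the point is that in genus zero a differential is rigidly determined by its residues as soon as these are special enough. Suppose first that $s=0$ and that all residues vanish. Since $\PP^{1}$ carries no nonzero holomorphic differential and the residues are the sole obstruction to integrating $\omega$, the differential is exact, $\omega=dF$ with $F$ a rational function whose poles are those of $\omega$, of order $b_{j}-1$; hence $\deg F=\sum_{j}b_{j}-p$. A zero of $\omega$ of order $a_{i}$ is a ramification point of $F$ of index $a_{i}+1\le\deg F$, so $a_{i}\le\sum_{j}b_{j}-(p+1)$. This is exactly the negation of \eqref{eq:genrezeroresiduzerofr}: under the hypothesis of (i) the origin is excluded, while when no zero is that large the bound is empty. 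The same idea yields the forbidden lines of (ii). If all poles are simple and the residues equal $\lambda\cdot(n_{1},\dots,n_{s})$ with $(n_{i})=(x_{1},\dots,x_{s_{1}},-y_{1},\dots,-y_{s_{2}})$ a primitive integer vector, set $R=\prod_{i}(z-P_{i})^{n_{i}}$, a rational function of degree $N=\sum_{i}x_{i}=\sum_{j}y_{j}$; then $\omega-\lambda\,dR/R$ is holomorphic on $\PP^{1}$, hence zero, so $\omega=\lambda\,dR/R$. The zeros of $\omega$ are the ramification points of $R$ lying over $\C^{\ast}$, of order at most $\deg R-1=N-1$, so a zero of order $\max_{k}a_{k}$ forces $N>\max_{k}a_{k}$, the negation of \eqref{eq:polessimples}. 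This excludes precisely the announced lines.

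For the realisation I would assemble the surfaces from flat pieces. Each pole contributes a polar domain modelled on its local normal form: a semi-infinite cylinder of circumference $2\pi i\,r_{i}$ for a simple pole, and for a pole of order $b_{j}\ge2$ an infinite-area end whose residue is a free parameter that may be set to any prescribed value (including $0$). One glues these ends, along parallel slits, to a compact core carrying the cone points of angle $2\pi(a_{k}+1)$, closing up a translation sphere; the only global requirement is $\sum_{i}r_{i}=0$. When $s=0$ this produces every nonzero configuration, together with the origin whenever the ramification bound above is not violated, which settles (i) and the $s=0$ part of (iii). When at least one higher-order pole is present, the extra moduli of its polar domain remove any proportionality constraint on the remaining residues, giving the surjectivity asserted in the rest of (iii).

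The main obstacle is case (ii) off the forbidden lines, where no higher-order end is available to absorb the constraints and the construction must reproduce the multiplicity profile $(a_{1},\dots,a_{n})$ exactly. I would distinguish two regimes. If the residues are not all real-proportional, the circumferences $2\pi i\,r_{i}$ span $\RR^{2}$, giving genuine two-dimensional freedom to glue the semi-infinite cylinders around a core realising the prescribed cone points. If they are proportional, the configuration lies on a rational line with $N>\max_{k}a_{k}$, and producing $\omega=\lambda\,dR/R$ amounts to constructing a degree-$N$ branched cover $R\colon\PP^{1}\to\PP^{1}$ with ramification profile $(x_{i})$ over $0$, $(y_{j})$ over $\infty$, and one part equal to $a_{k}+1$ over each of $n$ distinct points of $\C^{\ast}$; the Riemann--Hurwitz count $2N-2$ holds automatically, and the strict inequality $N>\max_{k}a_{k}$ is exactly what lets each block $a_{k}+1$ fit inside a fibre. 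The delicate step is to carry out this last construction—equivalently, the associated cylinder decomposition—for every admissible partition and every splitting $s=s_{1}+s_{2}$, while checking that no zero accidentally collides with a pole, so that the image is exactly the complement of the forbidden set rather than merely contained in it.
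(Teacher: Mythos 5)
Your non-existence arguments are correct, and they are genuinely different from the paper's: for case (i) you use exactness ($\omega=dF$ with $\deg F=\sum_{j}b_{j}-p$, so every ramification index $a_{i}+1$ is at most $\deg F$) where the paper uses a degenerate-core/half-plane count (proposition~\ref{prop:coeurdege} and lemme~\ref{lem:condgzero}); for case (ii) you use the rigidity $\omega=\lambda\,dR/R$ where the paper uses integrality of saddle-connection lengths. Both are clean and would be acceptable substitutes for that half of the theorem.

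The realization half, however, is where essentially all of the paper's work lies, and there your proposal has genuine gaps. First, in case (ii) your dichotomy is incomplete: residues that are collinear but with \emph{irrational} ratios lie on no forbidden plane $\CC^{\ast}\cdot(x_{1},\dots,-y_{s_{2}})$, so they must be realized, yet your claim that proportional configurations ``lie on a rational line'' is false for them, and they can never be of the form $\lambda\,dR/R$, so the branched-cover construction cannot produce them; your cylinder-gluing construction is only invoked in the non-collinear case. The paper treats exactly these tuples via the connection-graph criterion (lemme~\ref{lem:gzeropolesimples}) together with the commensurability lemma (lemme~\ref{lem:finietcommens}): incommensurable partial sums never coincide, so the greedy leaf-removal always succeeds. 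Second, for rational collinear residues with $N>\max_{k}a_{k}$ you reduce realizability to a Hurwitz existence problem (a degree-$N$ cover of $\PP^{1}$ with profile $(x_{i})$ over $0$, $(y_{j})$ over $\infty$, and one part $a_{k}+1$ over each of $n$ further points) and explicitly leave it open as ``the delicate step''. But this existence statement \emph{is} the theorem in this case: genus-zero Hurwitz problems are precisely where nontrivial obstructions live (the forbidden planes themselves are such obstructions), so the Riemann--Hurwitz count and the necessary inequality $N>\max_{k}a_{k}$ do not settle it. The paper's induction on $s$ and on $n$ — connection graphs for the single-zero strata, then stable differentials and lemme~\ref{lm:g0p-1plusieurszero} with a minimality argument for $n\geq2$ — is exactly the missing content. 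Third, in cases (i)/(iii) with $s=0$, realizing the origin when every $a_{i}\leq\sum_{j}b_{j}-(p+1)$ is asserted in a single clause, but it requires a construction the sketch does not contain: the paper needs the cyclic gluing of trivial polar parts for two zeros, the bipartite-graph distribution argument for three zeros, and the blow-up induction for $n\geq4$. As it stands, the proposal proves the image is \emph{contained in} the announced sets, but not that it equals them.
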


\smallskip
\par
\subsection{Applications}
\label{sec:apliintro}

Nous donnons deux applications, l'une à la géométrie de l'espace des modules des courbes algébriques pointées et l'autre aux différentielles abéliennes holomorphes. 
\smallskip
\par
\paragraph{\bf Lieu de Weierstra\ss.}
La première conséquence est un résultat sur la géométrie de l'espace des modules des courbes pointées. Notons que la première des deux assertions a été prouvée par Eisenbud et Harris dans \cite[théorème 3.1]{eiha} à l'aide des séries linéaires limites. L'illustration des courbes stables pointées de cet énoncé est donnée dans la figure~\ref{fig:courbesWei}.
\begin{prop}\label{prop:limWei}
Soit $\overline{\mathfrak{W}_{g}}$ l'adhérence dans $\barmoduli[g,1]$ du lieu paramétrant les points de Weierstra\ss{}~des courbes algébriques de genre $g$. 
\begin{itemize}
\item[i)] Le lieu $\overline{\mathfrak{W}_{g}}$ n’intersecte pas le lieu des courbes stables où $g$ courbes elliptiques sont attachées à un $\PP^{1}$ contenant le point marqué.
\item[ii)] Le lieu $\overline{\mathfrak{W}_{g}}$ intersecte le lieu où $g-1$ courbes elliptiques sont attachées à un $\PP^{1}$ contenant le point marqué et l'une de ces courbes elliptiques est attachée par deux points au $\PP^{1}$. 
\end{itemize}
\end{prop}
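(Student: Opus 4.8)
The plan is to detect membership in $\overline{\mathfrak{W}_{g}}$ through the degeneration theory of Abelian differentials, and then to reduce both assertions to Theorem~\ref{thm:geq0keq1} on the central $\PP^{1}$. Recall that a smooth point $P$ of a genus $g$ curve is a \Weierstrass{} point exactly when some holomorphic differential vanishes to order at least $g$ at $P$; for instance the minimal stratum $\omoduli(2g-2)$ realizes this with a single zero. Hence a stable pointed curve lies in $\overline{\mathfrak{W}_{g}}$ once it carries a twisted differential, compatible with a level function and satisfying the global residue condition, whose component $Y$ through $P$ has a zero of order $\geq g$ at $P$. In both configurations $Y$ is the central $\PP^{1}$, so $\deg\divisor{\omega_{Y}}=-2$; a zero of order $\geq g$ at $P$ then forces poles of total order at least $g+2$, which can only sit at the nodes. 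A node carries a pole of $\omega_{Y}$ only if the elliptic curve glued there lies strictly above $Y$; such a curve, having a single marked point, must carry its translation-invariant (nowhere vanishing) differential, so $\omega_{Y}$ acquires exactly a double pole there. Taking all $g$ elliptic tails above $Y$, the differential $\omega_{Y}$ thus lies in $\omoduli[0](2g-2;-2,\dots,-2)$ with $g$ double poles.

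Next I would read off the residues that the global residue condition imposes at these double poles. An elliptic tail attached along a single node is its own connected component of the part lying above $Y$, so the condition forces the residue of $\omega_{Y}$ at that node to vanish; the one tail of configuration~(ii) attached along two nodes forms a single upper component meeting $Y$ twice, so the condition only asks that its two residues be opposite, leaving a pair $(r,-r)$ with $r$ free. For $\mu=(2g-2;-2,\dots,-2)$ one has $s=0$, a single zero of order $a_{1}=2g-2$, and $\sum_{j}b_{j}-(p+1)=2g-(g+1)=g-1$, so the inequality~\eqref{eq:genrezeroresiduzerofr} reads $2g-2>g-1$ and holds for $g\geq 2$. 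Therefore Theorem~\ref{thm:geq0keq1}~i) applies and the image of $\appres[0](\mu)$ is exactly $\espres[0](\mu)\setminus\{0\}$.

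The two assertions are now the two faces of this statement. In configuration~(i) every tail meets $Y$ in a single node, so the residue vector of $\omega_{Y}$ is forced to be $(0,\dots,0)$, which is precisely the excluded value; no admissible $\omega_{Y}$ exists and $\overline{\mathfrak{W}_{g}}$ avoids this boundary stratum. In configuration~(ii) the forced residue vector is $(r,-r,0,\dots,0)$, a nonzero element of $\espres[0](\mu)$ and hence in the image; choosing such an $\omega_{Y}$ and gluing the invariant differentials on the $g-1$ elliptic tails yields a twisted differential meeting every matching and global residue condition, so this boundary stratum does meet $\overline{\mathfrak{W}_{g}}$.

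The delicate points are the twisted-differential criterion itself and the exclusion of the remaining level functions, in particular those in which some elliptic tails sit below $Y$. These are handled uniformly. If $m\leq g$ tails lie above $Y$, then placing $P$ at infinity gives $\omega_{Y}=\sum_{i=1}^{m}A_{i}(z-c_{i})^{-2}\,dz$ with vanishing residues, and a zero of order $\geq g$ at $P$ imposes $\sum_{i}A_{i}c_{i}^{k}=0$ for $0\leq k\leq g-1$, whose Vandermonde matrix in the distinct $c_{i}$ forces $A_{i}=0$ and hence $\omega_{Y}\equiv 0$. Equivalently, the associated genus zero stratum still falls under case~i) of Theorem~\ref{thm:geq0keq1}, since the zero of order $\geq g$ at $P$ exceeds the bound $\sum_{j}b_{j}-(p+1)=m-1$, so the forced residue vector $0$ stays excluded. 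Thus configuration~(i) is obstructed for every admissible level structure, while the single pair of opposite nonzero residues allowed in configuration~(ii) is exactly what renders it realizable.
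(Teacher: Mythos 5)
Your proposal is correct and follows the same overall route as the paper: membership in $\overline{\mathfrak{W}_{g}}$ is tested through smoothable twisted differentials via \cite{BCGGM}, everything is reduced to the differential carried by the central $\PP^{1}$, configuration i) is excluded because the global residue condition forces the residue vector $(0,\dots,0)$, which Theorem~\ref{thm:geq0keq1}~i) rules out, and configuration ii) is realized by allowing a pair $(r,-r)$, $r\neq0$, at the two nodes of the doubly-attached elliptic curve. Two points of execution differ, and both of yours are sound. For i), the paper relies solely on Theorem~\ref{thm:geq0keq1} (the zero of order $\geq g$ beats the bound $\sum_{j} b_{j}-(p+1)\leq g-1$), whereas you add a self-contained linear-algebra proof: with double poles and vanishing residues one has $\omega_{Y}=\sum_{i} A_{i}(z-c_{i})^{-2}\,dz$, and a zero of order $\geq g$ at infinity yields the Vandermonde system $\sum_{i}A_{i}c_{i}^{k}=0$ for $0\leq k\leq g-1$, forcing $\omega_{Y}\equiv0$; this is more elementary and treats all level functions uniformly (note, though, that like the paper you implicitly exclude an order $-1$ pole at a node, which could a priori occur if a marked simple zero of the type $(g,(1^{g-2}))$ differential sat on an elliptic tail; it is impossible because on an elliptic curve $Q-q\sim0$ forces $Q=q$). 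For ii), the paper builds a twisted differential of type $(g,(1^{g-2}))$ --- exactly the type whose incidence variety compactification is identified with $\overline{\mathfrak{W}_{g}}$ in \cite{BCGGM} --- using a differential of $\omoduli[0](g,(1^{g-2});(-2^{g}))$ with $g-2$ vanishing residues, while you use the minimal type, taking $\omega_{Y}\in\omoduli[0](2g-2;(-2^{g}))$ with residues $(r,-r,0,\dots,0)$; this is equally valid, but it requires the supplementary (easy, and duly stated by you) remark that the projection of $\omoduli(2g-2)$ lies in the Weierstrass locus, so that limits of its points indeed lie in $\overline{\mathfrak{W}_{g}}$, whereas the paper's choice plugs directly into the cited identification.
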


\smallskip
\par
\paragraph{\bf Cylindres sur une surface plate.}
La seconde application est sur  la géométrie des surfaces de translations associées à une différentielle abélienne holomorphe.  Naveh a montré dans \cite{Na} que le nombre maximal de cylindres disjoints dans une différentielle holomorphe de la strate $\omoduli(a_{1},\dots,a_{n})$  est $g+n-1$ et que cette borne est toujours atteinte. Nous précisons ce résultat dans deux directions. 

Tout d'abord, nous décrivons les périodes possibles des circonférences de ces cylindres. Notons que la période d'un cylindre est un nombre complexe non nul bien définie modulo multiplication par~$\pm 1$.
\begin{prop}\label{prop:cylindres}
 Soient $S:=\omoduli(a_{1},\dots,a_{n})$ une strate de différentielles abéliennes holomorphes et $\lambda:=(\lambda_{1},\dots,\lambda_{t})\in (\mathbb{C}^{\ast}/\{\pm1\})^{t}$.
Il existe  une différentielle dans $S$ avec $t$ cylindres disjoints dont les circonférences sont $\lambda_{1},\dots,\lambda_{t}$ si et seulement s'il existe une
différentielle stable $(X,\omega)$ avec   $t$ nœuds, telle que:
\begin{enumerate}
 \item la restriction de $\omega$ à chaque composante irréductible possède un pôle simple aux points nodaux;
 \item les résidus aux points nodaux correspondant à un nœud sont $\pm \lambda_{i}$;
 \item les singularités de la restriction de $\omega$ à la partie lisse de $X$ sont d'ordres  $(a_{1},\dots,a_{n})$.
\end{enumerate}
\end{prop}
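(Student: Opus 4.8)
The statement is an equivalence, and both directions rest on the same flat-geometric dictionary between a finite flat cylinder and a node carrying two simple poles. The local model is the following: a finite flat cylinder whose core curve $\gamma$ has holonomy $\int_{\gamma}\omega=\lambda$ degenerates, as its height tends to infinity with $\lambda$ fixed, to a pair of half-infinite cylinders glued at a node; on each of the two branches the differential acquires a simple pole, and by the residue theorem applied to a loop homotopic to $\gamma$ the two residues are opposite and equal to $\pm\lambda$ in the normalization used here. Conversely, two half-infinite cylinders of equal circumference $\lambda$ attached at a node can be truncated at a common finite height and reglued to produce a finite flat cylinder of circumference $\lambda$. The plan is to run these two surgeries in families of $t$ disjoint cylinders, respectively $t$ nodes.

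\textbf{Forward direction.} Suppose $(X_{0},\omega_{0})\in S$ has $t$ disjoint cylinders $C_{1},\dots,C_{t}$ with circumferences $\lambda_{1},\dots,\lambda_{t}$, and let $\gamma_{i}$ be the core curve of $C_{i}$. I would pinch the $t$ curves $\gamma_{i}$, which are disjoint and hence simultaneously pinchable, equivalently stretch each $C_{i}$ to infinite height while keeping its circumference fixed. The resulting limit is a stable differential $(X,\omega)$ whose irreducible components are the connected pieces of $X_{0}$ cut along the $\gamma_{i}$, each completed by half-infinite cylinders, whose $t$ nodes are the images of the $\gamma_{i}$, and on which $\omega$ has at the two branches of the $i$-th node simple poles with residues $\pm\lambda_{i}$; this gives (1) and (2). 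Since the zeros of $\omega_{0}$ all lie outside the cylinders, they are untouched and give (3), with orders $(a_{1},\dots,a_{n})$. The arithmetic genus is preserved by the degeneration, so $(X,\omega)$ is the desired stable differential.

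\textbf{Backward direction.} Conversely, let $(X,\omega)$ be a stable differential with $t$ nodes satisfying (1)--(3). Condition (1) means that near the two branches of the $i$-th node $\omega$ looks like a pair of simple poles, that is, two half-infinite flat cylinders; condition (2) forces these two cylinders to have the same circumference $\lambda_{i}$, so they can be truncated at a finite height and glued along their boundary circles, smoothing the $i$-th node and inserting a finite flat cylinder of circumference $\lambda_{i}$. Performing this surgery at all $t$ nodes yields a connected smooth translation surface $(X_{0},\omega_{0})$. The inserted pieces are genuine flat cylinders carrying neither zero nor pole, and they are pairwise disjoint by construction, so $\omega_{0}$ is holomorphic and has $t$ disjoint cylinders of circumferences $\lambda_{1},\dots,\lambda_{t}$. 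By (3) the zeros away from the nodes are unchanged, of orders $(a_{1},\dots,a_{n})$; a degree count on the dualizing sheaf shows that the arithmetic genus of $X$ equals $g$, so smoothing the $t$ nodes produces a surface of genus $g$ and $(X_{0},\omega_{0})$ lies in $S=\omoduli(a_{1},\dots,a_{n})$.

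\textbf{Main obstacle.} The delicate point is the forward degeneration: one must check that stretching the $t$ cylinders converges to a genuine stable differential with exactly the nodes and residues claimed, with no extra component collapsing and no spurious vanishing of $\omega$ on a component, and one must track the signs in the identification of $\pm\lambda_{i}$ with the residues through the residue theorem. In the backward direction the analogous care is needed to ensure that the plumbing creates no new zeros and keeps the surface in the prescribed stratum and genus; the gluing itself is a clean flat surgery once the circumferences are matched, which is precisely the role played by the hypothesis (2) on the residues.
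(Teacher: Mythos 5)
Your proof is correct and follows essentially the same route as the paper: your forward direction is the paper's surgery of cutting each cylinder along a core periodic geodesic and completing the two halves by half-infinite cylinders of the same circumference, and your backward direction (truncating the two half-infinite cylinders at a node and gluing them into a finite flat cylinder) is exactly the plumbing underlying the smoothing lemma (lemme~\ref{lem:lisspolessimples}) that the paper invokes. The convergence issue you flag as the main obstacle is moot in the paper's formulation: no limit is taken, since the cut-and-replace operation directly produces the stable differential, so your construction is already rigorous once read as a surgery rather than as a degeneration.
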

Cette proposition permet de montrer que tous les $t$-uplets sont des circonférences si $t<g$. Toutefois, il existe en général des restrictions si $t=g$. Ces faits sont énoncés dans le corollaire~\ref{cor:cylindres}.

D'autre part nous montrons que la borne sur le nombre de cylindres est atteinte dans chaque composante connexe des strates de différentielles holomorphes.
\begin{prop}\label{prop:cylcc}
 Dans chaque composante connexe de $\omoduli(a_{1},\dots,a_{n})$ il existe une différentielle abélienne avec $g+n-1$ cylindres.
\end{prop}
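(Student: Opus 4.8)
Le plan est de ramener l'énoncé à une construction sur une courbe dégénérée via la proposition~\ref{prop:cylindres}, puis de contrôler la composante connexe du lissage obtenu. Si la strate $\omoduli(a_{1},\dots,a_{n})$ est connexe, la proposition~\ref{prop:cylindres} fournit directement une différentielle à $g+n-1$ cylindres et il n'y a rien de plus à faire. Dans le cas général, en appliquant cette proposition avec $t=g+n-1$, il suffit de produire une différentielle stable $(X_{0},\omega_{0})$ à $g+n-1$ n\oe{}uds, dont la restriction à chaque composante irréductible n'a que des pôles simples aux points nodaux et dont les singularités sur la partie lisse sont d'ordres $(a_{1},\dots,a_{n})$, puis de déterminer la composante connexe dans laquelle le lissage correspondant se situe.

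Je cherche $(X_{0},\omega_{0})$ maximalement dégénérée, c'est-à-dire dont toutes les composantes sont de genre $0$. Le graphe dual $\Gamma$ a alors $g+n-1$ arêtes, et comme toutes les composantes sont de genre $0$, le genre total vaut $g=b_{1}(\Gamma)$; étant connexe, $\Gamma$ possède exactement $n$ sommets. Je répartis les zéros de sorte que le $i$-ème sommet porte un unique zéro d'ordre $a_{i}$: puisque $\deg K_{\PP^{1}}=-2$, ce sommet est de degré $a_{i}+2$, ce qui est cohérent avec $\sum_{i}(a_{i}+2)=2(g+n-1)$, soit le double du nombre d'arêtes. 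Sur chaque sommet je réalise une différentielle de la strate $\omoduli[0](a_{i};-1,\dots,-1)$ à $a_{i}+2$ pôles simples. Pour recoller, j'affecte à chaque arête une valeur non nulle, opposée à ses deux extrémités, de manière que la somme des résidus s'annule en chaque sommet: l'espace de ces affectations s'identifie à $H_{1}(\Gamma;\CC)$, de dimension $g$. Un choix générique place le vecteur des résidus de chaque sommet dans l'image décrite par le théorème~\ref{thm:geq0keq1}, ce qui garantit l'existence de chaque brique de genre $0$ et fournit $(X_{0},\omega_{0})$.

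Le point véritablement délicat est de contrôler la composante connexe contenant le lissage. Je rappelle la classification de Kontsevich--Zorich (voir aussi \cite{Bo}): une éventuelle composante hyperelliptique et, lorsque tous les $a_{i}$ sont pairs, deux composantes distinguées par la parité de la structure spin. Pour la composante hyperelliptique, je choisis $\Gamma$ et les briques de genre $0$ invariants sous une involution réalisant $-\omega_{0}$ (une configuration symétrique de sphères), de sorte que le lissage hérite d'une involution hyperelliptique. Pour les composantes spin, j'utilise la représentation plate: le lissage est une surface décomposée en $g+n-1$ cylindres, et la parité de la structure spin se calcule directement sur cette décomposition par la formule de Kontsevich--Zorich. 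En modifiant la donnée de recollement au voisinage d'un seul n\oe{}ud, je fais varier cette parité d'une unité, atteignant ainsi les deux composantes. La principale difficulté réside précisément dans cette comptabilité: vérifier que la configuration dégénérée choisie se lisse dans la composante visée, en contrôlant simultanément la parité spin et l'hyperellipticité.
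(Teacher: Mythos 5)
Your overall skeleton --- smooth a maximally degenerate stable differential whose components are all rational, with simple poles at the nodes, so that each of the $g+n-1$ nodes becomes a cylinder (proposition~\ref{prop:cylindres}, lemme~\ref{lem:lisspolessimples}) --- is the same as the paper's, which likewise works with geometric genus $0$ configurations (an irreducible rational curve with $g$ nodes for the minimal strata, degenerated further afterwards to split off the remaining zeros). But the entire content of the proposition lies exactly in the step you leave open: arranging the \emph{discrete} data of the configuration so that the smoothing lands in a \emph{prescribed} connected component, and you concede this yourself (\emph{la principale difficulté réside précisément dans cette comptabilité}). Worse, your one concrete mechanism for the spin components is wrong as stated: the gluing data at a node (circumference, height, twist of the plumbing cylinder) are continuous parameters, and all smoothings of a fixed stable differential with fixed combinatorics form a connected family in the stratum; since the spin parity is locally constant, \emph{modifier la donnée de recollement au voisinage d'un seul n\oe{}ud} cannot change it. The parity can only change through a discrete change of the combinatorics (which poles are glued to which, and in which cyclic order), and this must then be certified by a computation. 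That computation is the paper's proof: glue the negative cylinders in the orders $g,1,2,\dots,g-1$ and $g,1,2,\dots,g-3,g-1,g-2$, exhibit a symplectic basis $(\alpha_{i},\beta_{i})$ adapted to the cylinder decomposition, compute the indices of these loops, and apply the parity formula (équation~(4) de \cite{kozo}); one must also check that these two models are \emph{not} hyperelliptic, while the hyperelliptic component comes from the order-reversing, symmetric gluing --- your \emph{configuration symétrique de sphères} is that idea, but it too is not carried out.

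Two secondary gaps in your generic-residue step should also be flagged. First, an edge of $\Gamma$ that is a bridge forces the corresponding residue to vanish (the value of any flow across a bridge is zero), contradicting the requirement that all poles be simple; so $\Gamma$ must be chosen $2$-arête-connexe, which you do not impose. Second, the restriction of $H_{1}(\Gamma;\CC)$ to the star of a vertex is in general a \emph{proper} subspace of the sum-zero hyperplane (this already fails at a cut vertex), so genericity inside $H_{1}(\Gamma;\CC)$ does not by itself guarantee that each vertex's residue vector avoids the excluded locus of the théorème~\ref{thm:geq0keq1}; one needs an argument that this image is not contained in that locus. These two points are repairable by choosing $\Gamma$ suitably; the component bookkeeping, however, is the real missing content, and without it the proposal is a plan rather than a proof.
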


\smallskip
\par
\subsection{Liens avec d'autres travaux}
\label{sec:travauxintro}

Le présent texte est une version améliorée de la partie sur les différentielles abéliennes du papier \cite{geta}. En particulier, les théorèmes~\ref{thm:ggeq1abel} et~\ref{thm:geq0keq1}  correspondent respectivement à la proposition~1.3 et au théorème~1.5 de \cite{geta}. L'amélioration la plus substantielle à été opérée dans le point ii) du théorème~\ref{thm:geq0keq1}. La partie de \cite{geta} consacrée aux différentielles quadratiques apparaîtra dans \cite{getaquad} et celle  consacrée aux $k$-différentielles avec $k\geq 3$ dans \cite{getakdiff}.  

Jusqu'à présent la question que nous traitons dans cet article a été considérée uniquement dans certains cas particuliers en genre zéro. Quelques résultats se trouvent dans la Section~2 de \cite{eiha} en lien avec les séries linéaires limites. \cite{Erm} donne de manière indépendante un énoncé équivalent au point ii) du théorème~\ref{thm:geq0keq1} en lien avec les métriques sphériques. Un cas simple du point i) du théorème~\ref{thm:geq0keq1} apparaît dans \cite[Lemme 3.6]{BCGGM} en lien avec les dégénérations de différentielles abéliennes.   Plus précisément, l'article \cite{BCGGM} nous permet de comprendre les limites  des différentielles d'une strate. Dans ce cas, on a besoin de connaître l'existence de différentielles dont les ordres des zéros et des pôles ainsi que des résidus ont été fixés. Notre article peut donc être vu comme la dernière pierre dans la description du bord des strates de différentielles abéliennes donnée par~\cite{BCGGM}.

En plus des applications que nous donnons dans la section~\ref{sec:appli}, les résultats de cet article sont utilisés en géométrie tropicale \cite{MUW}, pour étudier la dégénération des points de \Weierstrass~de manière plus précise \cite{genWei} ou encore pour l'étude de certains espaces d'Hurwitz \cite{mulSecond}.

\smallskip
\par
\subsection{Organisation de cet article}

Le schéma de la preuve de ces théorèmes est le suivant. Dans un premier temps nous utilisons la correspondance entre les différentielles méromorphes et certaines classes de surfaces plates introduites par \cite{Bo} et étudiées dans \cite{tahar}. Cette correspondance nous permet de construire explicitement des différentielles ayant les propriétés souhaitées lorsque le genre et le nombre de singularités sont petits. 

Dans un second temps, nous déduisons de  ces résultats les autres cas grâce à deux opérations introduites par \cite{kozo}. Ces deux opérations sont l'{\em éclatement d'un zéro} et la {\em couture d'anse}. La première de ces opérations permet d'augmenter le nombre de singularités sans changer le genre d'une différentielle. La seconde préserve le nombre de singularités mais augmente le genre de la surface de Riemann sous-jacente.

Enfin, dans les cas où l'application résiduelle n'est pas surjective, nous développons des méthodes ad hoc afin de montrer la non-existence de différentielles possédant certains invariants locaux. 
\smallskip
\par
L'article s'organise comme suit. Pour terminer cette introduction, nous posons quelques conventions. Dans la section~\ref{sec:bao} nous faisons les rappels nécessaires sur les représentations plates des différentielles méromorphes et sur les deux opérations précédemment citées. De plus, nous introduisons les briques élémentaires qui nous permettrons de construire les différentielles avec les propriétés souhaitées. La section~\ref{sec:g0} est dédiée au cas du genre $g=0$ et la section~\ref{sec:ggeq1} à celui des genres $g\geq1$.  Enfin les preuves des applications de la section~\ref{sec:apliintro} sont données dans la section~\ref{sec:appli}.

\smallskip
\par
\subsection{Conventions}
Dans cet article nous définirons le {\em résidu} d'une forme différentielle comme étant le résidu usuel multiplié par la constante $2i\pi$. En particulier, le théorème des résidus s'énonce 
\[\int_{\gamma}\omega=\sum_{i} \Res_{P_{i}}\omega \,,\]
où les $P_{i}$ sont les pôles de $\omega$ encerclés par $\gamma$.   Cette convention n'a aucune incidence sur les énoncés des résultats, mais rend les preuves plus agréables.

Si une strate paramètre des différentielles avec $m$ singularités égales à $a$, alors nous noterons cela $(a^{m})$. Par exemple $\omoduli[3](3,3;-1,-1)$ pourra être notée $\omoduli[3]((3^{2});(-1^{2}))$. Plus généralement, si nous considérons une suite $(a,\dots,a)$ de $m$ nombres complexes tous identiques, nous noterons cette suite $(a^{m})$. Nous espérons que ces notations seront claires dans le contexte.

\smallskip
\par
\subsection{Remerciements} 
Nous remercions Corentin Boissy pour des discussions enrichissantes liées à cet article. Une question de David Aulicino est à l'origine de la proposition~\ref{prop:cylindres}. Le premier auteur remercie l'{\em Institut für algebraische Geometrie} de la {\em Leibniz Universität Hannover} et le {\em Centro de Ciencias Matemáticas} de la {\em Universidad Nacional Autonoma de México} où il a élaboré une grande partie de ce texte. Le deuxième auteur est financé par l’Israel Science Foundation (grant No. 1167/17) et le European Research Council (ERC) dans le cadre du European Union Horizon 2020 Research and Innovation Programme (grant agreement No. 802107).
Enfin, nous remercions le rapporteur anonyme d'une première version de cet article pour sa relecture attentive et ses remarques qui ont grandement amélioré la qualité de cet article.

\section{Boîte à outils}
\label{sec:bao}

Dans cette section, nous introduisons les objets et les opérations de base de nos constructions.  Nous introduisons dans la section~\ref{sec:briques} les briques élémentaires pour construire des surfaces plates. Nous poursuivons par un rappel sur les différentielles entrelacées et les opérations d’éclatement d'un zéro et de couture d'anse dans la section~\ref{sec:pluridiffentre}. Enfin, nous discuterons un cas spécial de surfaces plates dans la section~\ref{sec:coeur}. 

\subsection{Briques élémentaires}
\label{sec:briques}

Dans ce paragraphe, nous introduisons des surfaces plates à bord qui nous serviront de briques pour construire les différentielles méromorphes ayant les propriétés locales souhaitées.

Les éléments de base de nos constructions seront une généralisation des {\em domaines basiques} introduit par \cite{Bo}.  Étant donnés des vecteurs $(v_{1},\dots,v_{l})$ dans $(\CC^{\ast})^{l}$. Nous considérons la ligne brisée $L$ dans $\CC$ donnée par la concaténation d'une demi-droite correspondant à $\RR_{-}$, des $v_{i}$ pour $i$ croissant et d'une demi-droite correspondant à $\RR_{+}$.
Nous supposerons que les $v_{i}$ sont tels que $L$ ne possède pas de points d'auto-intersection. 
 
Le {\em domaine basique positif (resp. négatif)} $D^{+}(v_{1},\dots,v_{l})$ (resp. $D^{-}(v_{1},\dots,v_{l})$) est l'adhérence de la composante connexe de $\CC\setminus L$ contenant les nombres complexes au dessus (resp. en dessous) de~$L$.
Étant donné un domaine positif $D^{+}(v_{1},\dots,v_{l})$ et un négatif $D^{-}(w_{1},\dots,w_{l'})$, on construit le {\em domaine basique ouvert à gauche (resp. droite)} $D_{g}(v_{1},\dots,v_{l};w_{1},\dots,w_{l'})$ (resp. $D_{d}(v_{1},\dots,v_{l};w_{1},\dots,w_{l'})$) en collant par translation les deux demi-droites correspondant à~$\RR_{+}$ (resp. $\RR_{-}$).

On se donne $b\geq2$ et $\tau\in\left\{1,\dots,b-1\right\}$.
Soient $(v_{1},\dots,v_{l};w_{1},\dots,w_{l'})$ des vecteurs de~$\CC^{\ast}$ tels que la partie réelle de leurs sommes est positive et que l'argument (pris dans $\left]-\pi,\pi\right]$) des $v_{i}$ est décroissant, des $w_{j}$ est croissant.
La  {\em partie polaire d'ordre $b$ et de type $\tau$ associée à $(v_{1},\dots,v_{l};w_{1},\dots,w_{l'})$} est la surface plate à bord obtenue de la façon suivante. Prenons l'union disjointe de  $\tau-1$ domaines basiques ouverts à gauche associé à la suite vide, $b-\tau-1$   domaines basiques ouverts à droite associé à la suite vide. Enfin prenons le domaine positif associé aux $v_{i}$ et le domaine négatif associé aux $w_{j}$. On colle alors par translation la demi-droite  inférieure du $i$-ième domaine polaire ouvert à gauche à la demi-droite supérieure du $(i+1)$-ième. La demi-droite inférieure du domaine $\tau-1$ est identifiée à la demi droite de gauche du domaine positif. La demi-droite de gauche du domaine négatif est identifiée à la positive du premier domaine ouvert à gauche. On procède de même à droite. La  figure~\ref{fig:ordreplusmoins} illustre cette construction.

\begin{figure}[htb]
\center
\begin{tikzpicture}[scale=1]

\begin{scope}[xshift=-6cm]
\fill[fill=black!10] (0,0)  circle (1cm);

\draw[] (0,0) coordinate (Q) -- (-1,0) coordinate[pos=.5](a);

\node[above] at (a) {$\tau$};
\node[below] at (a) {$1$};

\fill (Q)  circle (2pt);

\node at (1.5,0) {$\dots$};
\end{scope}

\begin{scope}[xshift=-3cm]
\fill[fill=black!10] (0,0)  circle (1cm);

\draw[] (0,0) coordinate (Q) -- (-1,0) coordinate[pos=.5](a);

\node[above] at (a) {$2$};
\node[below] at (a) {$3$};

\fill (Q)  circle (2pt);
\end{scope}
\begin{scope}[xshift=0cm]
\fill[fill=black!10] (0,0)  circle (1.5cm);
      \fill[color=white]
      (-.4,0.02) -- (.4,0.02) -- (.4,-0.02) -- (-.4,-0.02) --cycle;

\draw[] (-.4,0.02)  -- (.4,0.02) coordinate[pos=.5](a);
\draw[] (-.4,-0.02)  -- (.4,-0.02) coordinate[pos=.5](b);

\node[above] at (a) {$v$};
\node[below] at (b) {$v$};

\draw[] (-.4,0) coordinate (q1) -- (-1.5,0) coordinate[pos=.5](d);
\draw[] (.4,0) coordinate (q2) -- (1.5,0) coordinate[pos=.5](e);

\node[above] at (d) {$1$};
\node[below] at (d) {$2$};
\node[above] at (e) {$\tau+1$};
\node[below] at (e) {$\tau+2$};

\fill (q1) circle (2pt);
\fill[white] (q2) circle (2pt);
\draw (q2) circle (2pt);

\node at (4.5,0) {$\dots$};
\end{scope}

\begin{scope}[xshift=3cm]
\fill[fill=black!10] (0,0) coordinate (Q) circle (1cm);

\draw[] (0,0) coordinate (Q) -- (1,0) coordinate[pos=.5](a);

\node[above] at (a) {$\tau +2$};
\node[below] at (a) {$\tau+3$};

\fill[white] (Q)  circle (2pt);
\draw (Q)  circle (2pt);
\end{scope}
\begin{scope}[xshift=6cm]
\fill[fill=black!10] (0,0) coordinate (Q) circle (1cm);

\draw[] (0,0) coordinate (Q) -- (1,0) coordinate[pos=.5](a);

\node[above] at (a) {$b$};
\node[below] at (a) {$\tau+1$};

\fill[white] (Q)  circle (2pt);
\draw (Q)  circle (2pt);
\end{scope}

\end{tikzpicture}
\caption{Une partie polaire d'ordre $b$ de type $\tau$ associée à $(v;v)$. Les demi-droites dont les labels coïncident sont identifiés par translation.} \label{fig:ordreplusmoins}
\end{figure}

Si $\sum v_{i} =\sum w_{j}$ nous dirons que cette partie polaire est {\em triviale}. Dans le cas contraire, nous dirons que la partie polaire est {\em non triviale}. Sur la figure~\ref{fig:partiesnontrivial}, le dessin de gauche illustre une partie polaire non triviale.

On se donne maintenant des vecteurs $(v_{1},\dots,v_{l})$ avec $l\geq1$ tels que la concaténation $V$ de ces vecteurs dans cet ordre n'a pas de points d'auto-intersection. De plus, on suppose qu'il existe deux demi-droites parallèles  $L_{D}$ et $L_{F}$ de vecteur directeur $\overrightarrow{l}$, issues respectivement du point de départ $D$ et final $F$ de $V$, ne rencontrant pas $V$ et telles que $(\overrightarrow{DF},\overrightarrow{l})$ est une base positive de $\RR^{2}$. On définit la {\em partie polaire $C(v_{1},\dots,v_{l})$ d'ordre $1$ associé aux $v_{i}$}  comme le quotient du sous-ensemble de $\CC$ entre $V$ et les demi-droites $L_{D}$ et~$L_{F}$ par l'identification de $L_{D}$ à~$L_{F}$ par translation. Le résidu du pôle simple correspondant est donné par la somme $F-D$ des~$v_{i}$. Une partie polaire d'ordre $1$ est donnée à droite de la figure~\ref{fig:partiesnontrivial}.

\begin{figure}[htb]
\center
\begin{tikzpicture}[scale=1.2]

\begin{scope}[xshift=-7cm]
\fill[fill=black!10] (0,0) coordinate (Q) circle (1.5cm);

\coordinate (a) at (-.5,0);
\coordinate (b) at (.5,0);
\coordinate (c) at (0,.2);

\fill (a)  circle (2pt);
\fill[] (b) circle (2pt);
    \fill[white] (a) -- (c)coordinate[pos=.5](f) -- (b)coordinate[pos=.5](g) -- ++(0,-2) --++(-1,0) -- cycle;
 \draw  (a) -- (c) coordinate () -- (b);
 \draw (a) -- ++(0,-1.1) coordinate (d)coordinate[pos=.5] (h);
 \draw (b) -- ++(0,-1.1) coordinate (e)coordinate[pos=.5] (i);
 \draw[dotted] (d) -- ++(0,-.3);
 \draw[dotted] (e) -- ++(0,-.3);
\node[below] at (f) {$v_{1}$};
\node[below] at (g) {$v_{2}$};
\node[left] at (h) {$1$};
\node[right] at (i) {$1$};

\draw (b)-- ++ (1,0)coordinate[pos=.6] (j);
\node[below] at (j) {$2$};
\node[above] at (j) {$3$};
    \end{scope}

\begin{scope}[xshift=-3.5cm]
\fill[fill=black!10] (0,0) coordinate (Q) circle (1.5cm);

\draw[] (0,0) -- (1.5,0) coordinate[pos=.5](a);

\node[above] at (a) {$2$};
\node[below] at (a) {$3$};
\fill[] (Q) circle (2pt);
\end{scope}

\begin{scope}[xshift=2.5cm,yshift=-1cm]
\coordinate (a) at (-1,0);
\coordinate (b) at (1,0);
\coordinate (c) at (0,.2);

    \fill[fill=black!10] (a) -- (c)coordinate[pos=.5](f) -- (b)coordinate[pos=.5](g) -- ++(0,1.5) --++(-2,0) -- cycle;
    \fill (a)  circle (2pt);
\fill[] (b) circle (2pt);
 \draw  (a) -- (c) coordinate () -- (b);
 \draw (a) -- ++(0,1.3) coordinate (d)coordinate[pos=.5](h);
 \draw (b) -- ++(0,1.3) coordinate (e)coordinate[pos=.5](i);
 \draw[dotted] (d) -- ++(0,.3);
 \draw[dotted] (e) -- ++(0,.3);
\node[below] at (f) {$v_{1}$};
\node[below] at (g) {$v_{2}$};
\node[left] at (h) {$3$};
\node[right] at (i) {$3$};

    \end{scope}
\end{tikzpicture}
\caption{Une partie polaire non triviale associée à $(v_{1},v_{2};\emptyset)$ d'ordre $3$ (de type $1$) à gauche et d'ordre $1$ à droite.} \label{fig:partiesnontrivial}
\end{figure}

\par
D'après le théorème des résidus, l'intégration de $\omega$ le long d'un lacet fermé $\gamma$ est égale à la somme des résidus aux pôles encerclés par $\gamma$. En effet, rappelons notre convention que notre résidu est égal à $2i\pi$ fois le résidu usuel.
Cela a la conséquence suivante qui, bien qu'élémentaire, est primordiale pour notre étude.
\begin{lem}\label{lm:residu}
Soient $(v_{1},\dots,v_{l};w_{1},\dots,w_{l'})$ des nombres complexes, le pôle associé à la partie polaire d'ordre~$b$ et de type $\tau$ associée à $(v_{1},\dots,v_{l};w_{1},\dots,w_{l'})$ est d'ordre $-b$ et de résidu égal à $\sum_{i=1}^{l} v_{i}-\sum_{j=1}^{l'} w_{j}$. 

Soit $(v_{1},\dots,v_{l})$ avec $l\geq1$, le pôle associé au domaine polaire d'ordre~$1$ associé à $v_{i}$ est d'ordre~$-1$ et possède un résidu égal à $\sum_{i=1}^{l} v_{i}$.
\end{lem}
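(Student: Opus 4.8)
The plan is to reduce everything to the residue theorem, exactly as announced in the paragraph preceding the statement. First I would observe that each basic domain is, by construction, a subset of $\CC$ carrying the restriction of the standard holomorphic differential $\omega = dz$, and that every identification used to assemble a polar part is a translation, which preserves $dz$. Hence $\omega$ descends to a well-defined holomorphic $1$-form on the interior of the glued surface, singular only at the point $P$ where the infinite ends of the cyclically arranged pieces meet. With the convention fixed in the Conventions subsection, the residue at $P$ equals $\Res_P(\omega)=\int_\gamma \omega$ for a small positively oriented loop $\gamma$ around $P$, so both assertions follow once I (i) identify the order of $P$ and (ii) evaluate this period.

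For the order I would count half-planes. A basic domain open to the left or to the right associated to the empty sequence is obtained by gluing one positive and one negative half-plane, hence contributes two half-planes, whereas the distinguished positive domain $D^{+}(v_{1},\dots,v_{l})$ and negative domain $D^{-}(w_{1},\dots,w_{l'})$ contribute one half-plane each. Thus the polar part of order $b$ and type $\tau$ is built from $2(\tau-1)+2(b-\tau-1)+1+1 = 2(b-1)$ half-planes glued cyclically around $P$. This is precisely the local flat model of $z^{-b}\,dz$: developing a small loop around $P$ shows that the developing map $z\mapsto\int\omega$ winds $b-1$ times, i.e. the cone angle is $-2\pi(b-1)$, which characterizes a pole of order $-b$. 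For the order-one part $C(v_{1},\dots,v_{l})$, the identification of the two parallel half-lines $L_{D}$ and $L_{F}$ by translation produces a half-infinite flat cylinder whose end is the flat model of $z^{-1}\,dz$, so $P$ is a simple pole.

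It remains to compute the period $\int_\gamma\omega=\int_\gamma dz$, which I would do by developing $\gamma$ into $\CC$ piece by piece: on each domain $\omega=dz$, so the contribution of the arc of $\gamma$ lying in a given domain is just the difference of its endpoints in that domain's coordinate, and crossing a glued half-line amounts to applying the corresponding translation. Traversing $P$ once, the contributions across the empty-sequence domains telescope and cancel, since their boundary broken lines are the straight concatenation of $\RR_{-}$ and $\RR_{+}$ and carry no $v_{i}$ or $w_{j}$; what survives is the displacement $\sum_{i=1}^{l} v_{i}$ along the broken line of the positive domain minus the displacement $\sum_{j=1}^{l'} w_{j}$ along that of the negative domain. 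This yields the total holonomy $\sum_{i} v_{i}-\sum_{j} w_{j}$, hence the claimed residue; for $C(v_{1},\dots,v_{l})$ the same computation degenerates to the single translation vector $F-D=\sum_{i} v_{i}$ defining the cylinder.

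The main obstacle I anticipate is purely bookkeeping: making the telescoping rigorous requires choosing $\gamma$ together with a consistent developing chart, tracking exactly how the coordinate is transported across each of the cyclic gluings, and verifying that the type parameter $\tau$, which merely redistributes the empty half-planes between the two sides, contributes nothing to the period. A secondary point to phrase carefully is the identification of the half-plane count with the order of the pole, which I would either justify directly through the developing map as above or invoke from the flat description of meromorphic differentials in \cite{Bo,tahar}.
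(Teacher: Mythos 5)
Your proof is correct and follows exactly the route the paper intends: the paper states this lemma as an elementary consequence of the residue theorem (the period of a loop around the pole equals the residue, with the $2i\pi$ convention fixed in the Conventions) and gives no further proof. Your development --- translation-invariance of $dz$ under the gluings, the half-plane count $2(\tau-1)+2(b-\tau-1)+1+1=2(b-1)$ identifying the order $-b$, and the telescoping holonomy computation yielding $\sum_{i} v_{i}-\sum_{j} w_{j}$ (resp.\ $F-D=\sum_{i} v_{i}$ for the half-infinite cylinder) --- simply fills in the details the paper leaves implicit.
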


\subsection{Différentielles entrelacées, éclatement de zéros et couture d'anses}
\label{sec:pluridiffentre}

Dans ce paragraphe, nous décrivons certains cas particuliers des résultats obtenus dans \cite{BCGGM} au sujet des différentielles entrelacées. Cela nous permet de rappeler les constructions de l'{\em éclatement des zéros} et de la {\em couture d'anse}. 

Tout d'abord, nous rappelons la définition d'une différentielle entrelacée.
\'Etant donnée une partition~$\mu:=(m_{1},\dots,m_{t})$ telle que $\sum_{i=1}^t m_i = 2g-2$, une {\em différentielle entrelacée $\eta$ de type~$\mu$}
sur une courbe stable $t$-marquée $(X,z_1,\ldots,z_t)$
est une collection de
différentielles non nulles~$\eta_v$ sur les composantes irréductibles~$X_v$ de~$X$ satisfaisant aux conditions suivantes.
\begin{itemize}
\item[(0)] {\bf (Annulation comme prescrit)} Chaque différentielle $\eta_v$ est méromorphe et le support de son diviseur est inclus dans l'ensemble des nœuds et des points marqués de $X_v$. De plus, si un point marqué $z_i$ se trouve sur~$X_v$, alors $\ord_{z_i} \eta_v=m_i$.
\item[(1)] {\bf (Ordres assortis)} Pour chaque nœud de $X$ qui identifie $q_1 \in X_{v_1}$ à $q_2 \in X_{v_2}$,
$$\ord_{q_1} \eta_{v_1}+\ord_{q_2} \eta_{v_2} = -2\, . $$
\item[(2)] {\bf (Résidus assortis aux pôles simples)} Si à un nœud de $X$
qui identifie $q_1 \in X_{v_1}$ avec $q_2 \in X_{v_2}$ on a $\ord_{q_1}\eta_{v_1}=
\ord_{q_2} \eta_{v_2}=-1$, alors l'équation suivante est vérifiée
$$\Res_{q_1}\eta_{v_1} + \Res_{q_2}\eta_{v_2} = 0 \,.$$
\end{itemize}

Ce n'est que pour des cas très particuliers que nous aurons besoin de savoir quand une différentielle entrelacée est lissable dans la strate $\omoduli(\mu)$. Nous rappelons ici uniquement les cas qui nous intéressent.  Le premier cas est celui où l'ordre des différentielles à tous les points nodaux est égal à $-1$.
\begin{lem}\label{lem:lisspolessimples}
Soit $\eta=\left\{\eta_{v}\right\}$ une différentielle entrelacée de type $\mu$. Si l'ordre des différentielles~$\eta_v$ aux nœuds est $-1$, alors $\eta$ est lissable dans la strate $\omoduli(\mu)$ sans modifier les résidus aux pôles aux points non nodaux.
\end{lem}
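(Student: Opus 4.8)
L'idée est d'exploiter la représentation plate des composantes $(X_v,\eta_v)$ et de recoller, en chaque nœud, les deux demi-cylindres infinis associés aux pôles simples en un unique cylindre fini, ce qui lisse simultanément tous les nœuds. Plus précisément, chaque $(X_v,\eta_v)$ admet une représentation plate; au voisinage d'un pôle simple situé en un point nodal $q$, cette surface est isométrique à un demi-cylindre infini dont la circonférence vaut le résidu $\Res_q\eta_v$, ce qui est exactement la partie polaire d'ordre $1$ décrite avant le Lemme~\ref{lm:residu}. Pour un nœud identifiant $q_1\in X_{v_1}$ à $q_2\in X_{v_2}$, la condition (2) des différentielles entrelacées donne $\Res_{q_1}\eta_{v_1}=-\Res_{q_2}\eta_{v_2}$, de sorte que les deux demi-cylindres infinis attachés à ce nœud ont, comme cylindres plats, la même circonférence.

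On tronque alors chacun de ces demi-cylindres infinis à une hauteur finie $h$, ce qui fournit deux cylindres plats de même circonférence, dont les cercles de bord s'identifient par une translation (éventuellement composée d'une rotation, qui fournit un paramètre de twist). Ce recollement remplace le nœud et les deux demi-cylindres infinis par un unique cylindre fini, et lisse donc le nœud. En procédant de la sorte en chaque nœud, on obtient une surface plate fermée $(X_h,\omega_h)$, munie d'une structure de translation puisque les cylindres se recollent par translations. Comme un cylindre plat ne contient aucun zéro et que le recollement est purement local au voisinage des nœuds, les ordres des zéros et des pôles de $\omega_h$ aux points non nodaux coïncident avec ceux de $\eta$; ainsi $(X_h,\omega_h)$ appartient à $\omoduli(\mu)$.

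En faisant tendre $h$ vers $+\infty$, le cylindre fini associé à chaque nœud s'allonge indéfiniment et la famille $(X_h,\omega_h)$ dégénère vers la différentielle entrelacée $(X,\eta)$, ce qui établit que $\eta$ est lissable dans $\omoduli(\mu)$. Enfin, le recollement n'affectant la différentielle qu'au voisinage des nœuds, la forme demeure inchangée au voisinage de chaque pôle non nodal, et les résidus en ces points sont donc préservés. Le point qui demande le plus de soin est de vérifier que le recollement simultané en tous les nœuds est globalement bien défini et ne crée aucun zéro parasite le long des cercles de couture: cela résulte de ce que les identifications sont des translations et qu'un cylindre plat est régulier, de sorte qu'aucune singularité n'apparaît le long de la couture. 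On évite ainsi le recours à la machinerie générale de \cite{BCGGM}, ce cas particulier se traitant directement par recollement plat.
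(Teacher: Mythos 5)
Votre démonstration est correcte, mais elle ne suit pas la même route que le texte : le papier ne démontre pas ce lemme, il le \emph{rappelle} comme cas particulier des résultats de \cite{BCGGM}, en observant qu'une différentielle entrelacée dont tous les ordres aux nœuds valent $-1$ n'est autre qu'une différentielle stable classique, dont le lissage est couvert par le théorème de lissage de loc.\ cit. Vous donnez au contraire un argument autonome de recollement plat : troncature des demi-cylindres infinis associés aux pôles simples nodaux (de circonférences égales grâce à la condition (2) des différentielles entrelacées), recollement par translation en un cylindre fini, puis dégénérescence en faisant tendre la hauteur vers l'infini. C'est précisément le mécanisme que le papier évoque sans le développer --- voir la remarque qui suit le lemme~\ref{lem:lissdeuxcomp} (\enquote{il suffit de remplacer le nœud par un cylindre plat}) et les constructions inverses dans les preuves du lemme~\ref{lm:g0p-1plusieurszero} et de la proposition~\ref{prop:cylindres}, où l'on fait tendre les hauteurs de cylindres finis vers l'infini pour obtenir une différentielle stable. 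Votre approche a l'avantage d'être auto-contenue et d'éviter la machinerie générale de \cite{BCGGM}; elle demande en contrepartie les vérifications que vous faites (absence de zéro parasite le long des cercles de couture, préservation des résidus non nodaux), plus deux points à formuler avec un peu plus de soin : le paramètre de twist est lui-même une translation le long du cercle de recollement (une rotation au sens propre ne préserverait pas la structure de translation, mais sur un cercle plat $\RR/r\ZZ$ les deux notions coïncident), et la convergence $(X_{h},\omega_{h})\to(X,\eta)$ quand $h\to+\infty$ doit s'entendre au sens de la compactification de Deligne--Mumford, c'est-à-dire convergence des courbes sous-jacentes et convergence uniforme des différentielles sur les compacts hors des nœuds.
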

 Notons que dans ce cas, la notion de différentielle entrelacée correspond à la notion classique de différentielle stable. Ainsi nous pourrons nous ramener à ce résultat pour prouver le lemme~\ref{lm:g0p-1plusieurszero} et la proposition~\ref{prop:cylindres}.

Maintenant nous regardons le cas des différentielles entrelacées à deux composantes.
\begin{lem}\label{lem:lissdeuxcomp}
 Supposons que $X$ possède exactement deux composantes $X_{1}$ et $X_{2}$ reliées par un unique nœud  qui identifie $q_1 \in X_{1}$ à $q_2 \in X_{2}$. Si  une différentielle entrelacée de type $\mu$ vérifie $\ord_{q_1} \eta_{1}>-1>\ord_{q_2} \eta_{2}$, alors elle est lissable dans $\omoduli(\mu)$ si et seulement si  $\Res_{q_2}\eta_{2}=0$.
 
 De plus, si cette condition est vérifiée, alors le lissage peut se faire sans modifier les résidus aux pôles de $\eta_{1}$.
\end{lem}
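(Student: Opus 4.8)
The plan is to treat this as the two-level instance of the smoothing criterion of \cite{BCGGM}. The hypothesis $\ord_{q_1}\eta_1>-1>\ord_{q_2}\eta_2$ together with the matched-orders condition~(1) forces $\ord_{q_1}\eta_1=a\geq 0$ and $\ord_{q_2}\eta_2=-a-2$, so that $q_1$ is a zero (or a regular point) of $\eta_1$ while $q_2$ is a pole of order at least two of $\eta_2$. I would first note that this prescribes a level graph with exactly two levels, $X_1$ on top and $X_2$ at the bottom, the node being the unique passage from one level to the other. As the node is not of simple-pole/simple-pole type, condition~(2) is vacuous, so $\eta$ is a twisted differential compatible with this level graph, and lissability in $\omoduli(\mu)$ is governed by the residue condition of \cite{BCGGM}.

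Next I would show that, for this particular graph, that residue condition collapses to a single equation. The top level consists of the single component $X_1$, and the only node descending from it has lower end $q_2$; the residue sum attached to $X_1$ is therefore exactly $\Res_{q_2}\eta_2$. Hence the condition of \cite{BCGGM} holds if and only if $\Res_{q_2}\eta_2=0$, which yields both directions of the equivalence simultaneously: necessity is the soft implication (a smoothable twisted differential satisfies the residue condition), while the converse requires producing an actual family.

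For that converse, and to obtain the statement on residues at the same time, I would build the smoothing through the flat picture used throughout the paper. When $\Res_{q_2}\eta_2=0$, the pole of order $a+2$ of $\eta_2$ at $q_2$ carries no residue, so a neighbourhood of $q_2$ is assembled out of half-planes with no infinite cylinder; the boundary of a large truncation of it is then a closed broken line enclosing a cone of angle $2\pi(a+1)$. On $X_1$ the point $q_1$ is a cone point of $\eta_1$ of this same angle $2\pi(a+1)$. I would excise a small cone neighbourhood of $q_1$ and glue in the truncated polar neighbourhood of $q_2$ along the matching boundary, letting the truncation grow to produce a one-parameter family degenerating onto $\eta$. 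Since the surgery is supported near the node and leaves $(X_1,\eta_1)$ untouched elsewhere, the residues at the poles of $\eta_1$ are unchanged, which is the last assertion.

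The main obstacle is this converse construction. The vanishing of $\Res_{q_2}\eta_2$ is precisely what makes the polar end a finite broken line that can be capped by a cone; the work is to verify that the gluing genuinely lands in $\omoduli(\mu)$, with the correct orders of zeros and poles and no spurious singularities, and that the parameter realises a degeneration onto the given twisted differential rather than some nearby object. This is exactly where one appeals to the plumbing construction of \cite{BCGGM}. A secondary point requiring care is the bookkeeping ensuring that only the residues of the lower component $X_2$ may move under the smoothing, so that those of $\eta_1$ are genuinely preserved.
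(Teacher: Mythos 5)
Your route is the same as the paper's. The paper also treats this lemma as the two-component specialisation of the smoothing criterion of \cite{BCGGM}: the hypothesis on the orders forces the unique level structure with $X_{1}$ on top, condition~(2) is vacuous, and smoothability is decided by the global residue condition; for the supplementary statement the paper argues exactly as you do, namely that when $\Res_{q_2}\eta_{2}=0$ nothing needs to be modified on $X_{1}$ and the node can simply be replaced by a flat cylinder. Your truncated-cone gluing is this same local surgery written in flat coordinates, and, like the paper, you delegate its rigorous verification to the plumbing construction of \cite{BCGGM}. So on the sufficiency direction and on the preservation of the residues of $\eta_{1}$, your proposal and the paper's proof coincide.

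However, one step of your argument fails as written: the claim that the residue condition of \cite{BCGGM} reduces, for this graph, to the single equation $\Res_{q_2}\eta_{2}=0$, so that necessity is ``the soft implication''. The global residue condition only constrains those connected components $Y$ of the upper-level subcurve which contain no marked pole, i.e.\ no $z_{i}$ with $m_{i}<0$. If $\eta_{1}$ has a pole among the marked points of $\mu$ --- which is precisely the situation in the paper's applications, Propositions~\ref{prop:eclatZero} and~\ref{prop:attachanse} --- then the condition is vacuous for $Y=X_{1}$, and the twisted differential is smoothable by \cite{BCGGM} even if $\Res_{q_2}\eta_{2}\neq 0$. Concretely, for $\mu=(2;-2,-2)$ take two projective lines joined at one node, $\eta_{1}=dz$ with its marked double pole at infinity and the node at a regular point, and $\eta_{2}$ carrying the marked zero of order $2$, the marked double pole, and a double pole of residue $r\neq 0$ at the node: this twisted differential satisfies the criterion of \cite{BCGGM} and is smoothable into $\omoduli[0](2;-2,-2)$, whose differentials indeed all have nonzero residues at their poles. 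So the equivalence you assert does not hold in general. To be fair, this defect is already present in the statement of the lemma itself, and the paper's own proof --- essentially a citation of \cite{BCGGM} --- glosses over it exactly as you do; moreover the paper only ever uses the implication ``$\Res_{q_2}\eta_{2}=0$ implies smoothable without changing the residues of $\eta_{1}$'', which is the part your construction does establish.
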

Remarquons que le fait que le lissage peut se faire sans modifier les résidus n'est pas explicitement prouvé dans \cite{BCGGM}. Toutefois, cela se déduit  des techniques de \cite{BCGGM} de la façon suivante. Comme le résidu est nul, il n'y a pas besoin de modifier la différentielle sur $X_{1}$ pour pouvoir lisser. Il suffit donc de remplacer le nœud par un cylindre plat. 
\smallskip
\par
Maintenant, nous donnons deux applications cruciales du lemme~\ref{lem:lissdeuxcomp}.
\begin{prop}[Éclatement d'un zéro]\label{prop:eclatZero}
Soient $(X,\omega)$ une différentielle de type $\mu$ et $z_{0}\in X$ un zéro d'ordre $a_{0}$ de $\omega$. Soit $(\alpha_{1},\dots,\alpha_{t})$ un $t$-uplet d'entiers positifs tels que $\sum_{i}\alpha_{i}=a_{0}$. 

Il existe une opération sur $(X,\omega)$ en $z_{0}$ qui fournit une  différentielle $(X',\omega')$ de type $(\alpha_{0},\dots,\alpha_{t},\mu\setminus\lbrace a_{0}\rbrace)$ qui ne modifie pas les résidus aux pôles de $\omega$. 
\end{prop}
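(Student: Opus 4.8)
The plan is to realise the blow-up as the smoothing of a two-component twisted differential, reducing the whole statement to Lemma~\ref{lem:lissdeuxcomp}; write $\mu':=(\alpha_{1},\dots,\alpha_{t},\mu\setminus\{a_{0}\})$ for the target type (the indexing in the statement is a harmless relabelling). First I would build a rational component carrying exactly the zeros to be created: fix $t$ distinct points $p_{1},\dots,p_{t}$ and a further point $q_{2}$ in $\PP^{1}$, and, in an affine coordinate $z$ in which $q_{2}$ is the point at infinity, set $\eta_{2}:=\bigl(\prod_{i=1}^{t}(z-p_{i})^{\alpha_{i}}\bigr)\,dz$. Then $\eta_{2}$ has a zero of order $\alpha_{i}$ at each $p_{i}$, and since the defining polynomial has degree $\sum_{i}\alpha_{i}=a_{0}$ its only pole is at $q_{2}$, necessarily of order $a_{0}+2$ by the relation $\sum_{i}\alpha_{i}+\ord_{q_{2}}\eta_{2}=-2$ on $\PP^{1}$. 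The crucial point is that, $q_{2}$ being the unique pole of $\eta_{2}$, the residue theorem forces $\Res_{q_{2}}\eta_{2}=0$.

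Next I would assemble the twisted differential. Glue $X_{1}:=X$, carrying $\eta_{1}:=\omega$, to $X_{2}:=\PP^{1}$ by identifying the zero $z_{0}$ with $q_{2}$, and consider the collection $\eta:=\{\eta_{1},\eta_{2}\}$. This is a twisted differential of type $\mu'$: condition~(0) holds since every zero and pole sits at the node or at a marked point; the matching condition~(1) reads $\ord_{z_{0}}\eta_{1}+\ord_{q_{2}}\eta_{2}=a_{0}+(-a_{0}-2)=-2$; and condition~(2) is vacuous, the two orders at the node not both being $-1$.

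Finally I would invoke Lemma~\ref{lem:lissdeuxcomp} with $q_{1}=z_{0}$. Its hypothesis $\ord_{q_{1}}\eta_{1}>-1>\ord_{q_{2}}\eta_{2}$ holds, since $\ord_{z_{0}}\eta_{1}=a_{0}\geq1$ and $\ord_{q_{2}}\eta_{2}=-a_{0}-2\leq-3$, and the smoothability criterion $\Res_{q_{2}}\eta_{2}=0$ was established above; hence $\eta$ is smoothable in $\omoduli(\mu')$, the smoothed curve having arithmetic genus $g$ as required. The "moreover" clause of the lemma ensures that the smoothing does not alter the residues at the poles of $\eta_{1}=\omega$, so the resulting $(X',\omega')$ has type $\mu'$ and the same residues at the poles of $\omega$. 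I expect no serious obstacle here: the only genuine input is the vanishing $\Res_{q_{2}}\eta_{2}=0$, which is the sole condition in Lemma~\ref{lem:lissdeuxcomp}, and it holds automatically because $\PP^{1}$ carries a differential with a single pole.
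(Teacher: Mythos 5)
Your proof is correct and follows essentially the same route as the paper: the paper's own argument also attaches a projective line at $z_{0}$ carrying a differential with the prescribed zero orders and invokes Lemma~\ref{lem:lissdeuxcomp} to smooth the resulting twisted differential. You merely make explicit what the paper leaves implicit, namely the formula $\eta_{2}=\prod_{i}(z-p_{i})^{\alpha_{i}}\,dz$ and the observation that $\Res_{q_{2}}\eta_{2}=0$ holds automatically because the node carries the unique pole of $\eta_{2}$ on $\PP^{1}$.
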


\begin{proof}
 Partons de $(X,\omega)$. On forme une différentielle entrelacée en attachant au point~$z_{0}$ une droite projective avec une différentielle ayant les ordres souhaités. Le lemme~\ref{lem:lissdeuxcomp} implique directement la proposition~\ref{prop:eclatZero}.
\end{proof}

La seconde construction nous permettra en particulier de faire une récurrence sur le genre des surfaces de Riemann.

\begin{prop}[Couture d'anse]\label{prop:attachanse}
 Soient $(X,\omega)$ une différentielle abélienne dans la strate $\omoduli(\mu)$ et $z_{0}\in X$ un zéro d'ordre $a_{0}$ de $\omega$. 
 Il existe une opération qui ne modifie pas les résidus aux pôles de $\omega$ et qui donne une différentielle $(X',\omega')$ dans la strate $\omoduli[g+1](a_{0}+2,\mu\setminus\left\{a_{0}\right\})$. 
\end{prop}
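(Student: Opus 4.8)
The plan is to mimic the proof of Proposition~\ref{prop:eclatZero}, replacing the rational component by an elliptic curve, so that the genus goes up by one while the order of the zero at $z_0$ goes up by $2$. Concretely, I would form an interlaced differential on the stable curve obtained by attaching to $(X,\omega)$, at the point $z_0$, a smooth genus-$1$ curve $E$ through a single node identifying $z_0\in X$ with a chosen point $q\in E$. On the component $X$ I keep $\eta_X=\omega$, so that $\ord_{z_0}\eta_X=a_0\ge 1$. On $E$ I want a meromorphic differential $\eta_E$ whose only singularities are a pole of order $a_0+2$ at $q$ and a single zero of order $a_0+2$ at another point $P_0$. The matching-orders condition at the node then reads $a_0+(-(a_0+2))=-2$, the arithmetic genus of the nodal curve is $g+1$, and the non-nodal special points are the zero of order $a_0+2$ on $E$ together with the singularities of $\mu$ other than $a_0$; that is, the interlaced differential has type $(a_0+2,\mu\setminus\{a_0\})$.

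The key construction is that of $\eta_E$. Writing $E=\mathbb{C}/\Lambda$ with its group law, Abel's theorem tells us that a meromorphic differential with divisor $(a_0+2)\,P_0-(a_0+2)\,q$ exists if and only if $(a_0+2)(P_0-q)=0$ in $E$. So I would fix $q$ arbitrarily and choose $P_0$ with $P_0-q$ a nonzero $(a_0+2)$-torsion point of $E$; an elliptic function with this divisor, multiplied by $dz$, yields the desired $\eta_E$, since $dz$ has neither zeros nor poles. Because $a_0\ge 1$, the pole at $q$ has order $a_0+2\ge 3$, and since it is the \emph{unique} pole of $\eta_E$ on the compact surface $E$, the residue theorem forces $\Res_q\eta_E=0$ with no further work.

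With this $\eta_E$ in hand, the hypotheses of Lemma~\ref{lem:lissdeuxcomp} are all met: the curve has exactly two components joined by one node, with $\ord_{z_0}\eta_X=a_0>-1>-(a_0+2)=\ord_q\eta_E$, and $\Res_q\eta_E=0$. Hence the interlaced differential is smoothable in $\omoduli[g+1](a_0+2,\mu\setminus\{a_0\})$, and the final clause of that lemma guarantees that the smoothing can be carried out without modifying the residues at the poles of $\eta_X=\omega$. This is precisely the assertion of the proposition.

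The only genuinely non-routine point is the existence of $\eta_E$, and I expect that to be the main obstacle to verify carefully; it is resolved by the torsion/Abel argument above, together with the observation that the required residue vanishing is automatic for a single pole on a compact surface. Everything else—the genus count, the matching of orders, and the identification of the resulting type—is bookkeeping identical in form to the éclatement case treated in Proposition~\ref{prop:eclatZero}.
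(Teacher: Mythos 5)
Your proof is correct and follows essentially the same route as the paper: attach an elliptic curve carrying a differential of type $(a_{0}+2;-a_{0}-2)$ at $z_{0}$ to form an interlaced differential, then invoke Lemma~\ref{lem:lissdeuxcomp} to smooth it without changing the residues. The only difference is that you spell out the existence of that elliptic differential (via Abel's theorem and a nonzero $(a_{0}+2)$-torsion point) and the automatic vanishing of its residue, details the paper leaves implicit.
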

\begin{proof}
Partons de $(X,\omega)$. On forme une différentielle entrelacée en attachant au point $z_{0}$ une courbe elliptique avec une différentielle de type $(a_{0}+2;-a_{0}-2)$. Le lemme~\ref{lem:lissdeuxcomp} permet de conclure.
\end{proof}

Pour terminer, il est intéressant de noter que l'existence de différentielles holomorphes pour toute partition~$\mu$ de $2g-2$ prouvée par \cite{masm} se déduit des deux propositions précédentes et du fait que la strate $\omoduli[2](2)$ est non vide. En effet, pour montrer que les strates de la forme $\omoduli(2g-2)$ ne sont pas vides, on coud successivement $g-2$ anses à une différentielle de cette strate. Les strates avec $n\geq2$ zéros s’obtiennent en éclatant le zéro de ces strates. Ces deux chirurgies sont introduites dans la Section 4.2 de \cite{kozo}.

\subsection{Différentielles à cœur dégénéré}
\label{sec:coeur}

Les différentielles à cœur dégénéré constitue une famille particulièrement simple d'exemples de différentielles abéliennes. En particulier, beaucoup de problèmes géométriques se simplifient sur ces différentielles en des problèmes combinatoires.

Rappelons que le {\em cœur} d'une différentielle est l'enveloppe convexe des singularités coniques pour la métrique induite par la différentielle. On dit que le cœur est \textit{dégénéré} s'il est d'intérieur vide, c'est-à-dire s'il est réduit à l'union d'un nombre fini de liens selles.

Le complémentaire du cœur d'une surface de translations $S$ possède autant de composantes connexes que de pôles. On appelle \textit{domaine polaire} la composante à laquelle un pôle appartient. Le bord d'un domaine polaire est toujours formé par un nombre fini de liens selles (voir le lemme~2.1 de \cite{tahar}). De plus, si le cœur de $S$ est dégénéré, alors il y a exactement $2g+n+p-2$ connexions de selles dans $S$, où $n$ est le nombre de zéros et~$p$ le nombre de pôles de~$S$.

L'intérêt de cette notion pour notre problème est donné par la proposition suivante.
\begin{prop}\label{prop:coeurdege}
Dans une strate donnée, le lieu des différentielles abéliennes dont tous les résidus sont nuls est soit vide soit contient une différentielle à cœur dégénéré.
\end{prop}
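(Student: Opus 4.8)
Le plan est de partir d'une différentielle $(X,\omega)$ arbitraire du lieu à résidus nuls, supposé non vide, et de la déformer en restant dans ce lieu jusqu'à rendre son cœur dégénéré. L'outil central est l'action de $\mathrm{GL}_2^+(\RR)$ sur les surfaces plates. Cette action étant $\RR$-linéaire sur les périodes, elle envoie le résidu d'un pôle, vu comme un vecteur de $\RR^2\simeq\CC$, sur son image par l'élément correspondant; en particulier un résidu nul reste nul. Comme elle préserve par ailleurs les ordres des zéros et des pôles, donc la strate (et même chacune de ses composantes connexes), le lieu à résidus nuls est stable sous cette action.

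Je commencerais par appliquer une rotation générique à $(X,\omega)$, choisie de sorte qu'aucun lien selle ne soit horizontal; c'est loisible car l'ensemble des directions de liens selles est dénombrable. J'appliquerais ensuite la famille $g_{\lambda}=\mathrm{diag}(\lambda,1)$ pour $\lambda\in(0,1]$, qui contracte la direction horizontale. Le cœur étant l'enveloppe convexe des singularités coniques, son aire est multipliée par $\det g_{\lambda}=\lambda$ et tend donc vers $0$ lorsque $\lambda\to0$. De plus, un lien selle de vecteur $(x,y)$ devient $(\lambda x,y)$, de longueur $\sqrt{\lambda^{2}x^{2}+y^{2}}$: puisqu'aucun lien selle n'est horizontal, ces longueurs restent minorées par une constante strictement positive, de sorte qu'aucun zéro n'entre en collision avec un autre et qu'aucune connexion ne se contracte en un point.

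Il resterait à faire tendre $\lambda$ vers $0$ et à vérifier que la limite fournit une différentielle à cœur dégénéré dans la même strate. Les domaines polaires, étant non compacts, s'étendent à l'infini dans toutes les directions au voisinage de leur pôle; ils restent donc d'aire infinie et de même type sous la contraction, seules les régions bornées, c'est-à-dire le cœur, s'effondrant sur un segment vertical. Les longueurs des liens selles demeurant bornées et minorées, et les ordres des pôles étant inchangés, la famille $(g_{\lambda}\cdot\omega)_{\lambda}$ reste dans une partie relativement compacte de la strate; sa limite $(X_{0},\omega_{0})$ est alors une différentielle de la strate dont toutes les singularités coniques sont alignées, puisque tous les vecteurs de liens selles deviennent verticaux. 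Son cœur est d'intérieur vide, donc dégénéré, et ses résidus sont restés nuls.

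Le point délicat sera le contrôle de cette limite: il faut garantir que $(g_{\lambda}\cdot\omega)$ ne s'échappe pas vers le bord de l'espace des modules mais converge vers un point intérieur de la strate. Cela demande de montrer que le type combinatoire de la décomposition de la surface en cœur et domaines polaires se stabilise pour $\lambda$ assez petit, et que le recollement limite définit encore une surface plate connexe du même type topologique. La condition de résidus nuls est ici précieuse: par le lemme~\ref{lm:residu} elle rend toutes les parties polaires triviales, c'est-à-dire $\sum_{i} v_{i}=\sum_{j} w_{j}$, ce qui assure que le développement de chaque domaine polaire reste univalent et que ces domaines se recollent correctement le long du cœur dégénéré à la limite.
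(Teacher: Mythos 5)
Your opening move --- the locus of zero residues is ${\rm GL}^{+}(2,\RR)$-invariant (and closed, since the residue map is continuous), so it suffices to degenerate the core within an orbit --- is exactly the paper's first step, and your horizontal contraction $g_{\lambda}=\mathrm{diag}(\lambda,1)$ is morally how the degenerate-core surface is produced. But the paper does not prove this degeneration statement: it quotes Lemma~2.2 of \cite{tahar}, according to which every ${\rm GL}^{+}(2,\RR)$-orbit contains (in its closure) a flat surface with degenerate core. You are therefore re-proving that lemma from scratch, and your proof of it has a genuine gap, which you yourself flag as \og le point délicat \fg{} without closing it.

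Concretely, two steps are unjustified. First, the uniform lower bound on saddle-connection lengths: a \emph{fixed} saddle connection of period $(x,y)$ indeed keeps length $\geq |y|>0$, but a surface with higher-order poles may a priori carry infinitely many saddle connections (they all lie in the compact core, but when the core has positive area --- the only case at stake --- they can wind around it and be arbitrarily long), and their directions can then accumulate at the horizontal even after your generic rotation, so that $\inf |y| = 0$. Nothing then prevents, as $\lambda \to 0$, two distinct zeros from colliding or an essential loop from pinching: the shortest curves at time $\lambda$ are realized by longer and longer saddle connections with small vertical part, and the family escapes to the boundary of the stratum. Excluding this requires the finiteness of the set of saddle connections (and of cylinders) of a flat surface with poles of higher order, which is one of the results of \cite{tahar} and which you never invoke. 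Second, even granted a uniform lower bound on the systole, the claim that $(g_{\lambda}\cdot\omega)_{\lambda}$ stays in a relatively compact part of the stratum and converges to an \emph{interior} point requires a compactness criterion for strata of meromorphic differentials together with an identification of the limit; your last paragraph only asserts that the combinatorics stabilize and that trivial polar parts reglue correctly, which is a statement of intent, not an argument. Since this convergence is precisely the mathematical content of the cited lemma, your proposal is incomplete at its decisive step: the fix is either to invoke Lemma~2.2 of \cite{tahar}, as the paper does, or to actually prove the finiteness of saddle connections and the compactness statement.
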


\begin{proof}
Le lieu d'une strate où les résidus sont tous nuls est ${\rm GL}^{+}(2,\mathbb{R})$-invariant. Or, d'après le lemme 2.2 de \cite{tahar}, chaque ${\rm GL}^{+}(2,\mathbb{R})$-orbite contient une surface plate dont le cœur est dégénéré.
\end{proof}



\section{Les différentielles sur la sphère de Riemann}
\label{sec:g0}

Cette section est dédié au cas des différentielles sur une surface de Riemann de genre~$0$. Nous y donnons la preuve du Théorème~\ref{thm:geq0keq1}. Dans la section~\ref{sec:geng0}, nous traitons le cas général. Puis nous considérons dans la section~\ref{sec:casreszero} le cas où tous les résidus sont nuls. Enfin, nous traitons le cas des résidus colinéaires dans les strates dont tous les pôles sont simples dans la section~\ref{sec:caslier}.

\subsection{Le cas général}
\label{sec:geng0}
Dans cette section nous montrons l'existence de différentielles sur la sphère de Riemann possédant les invariants souhaités dans la majorité des cas. Plus précisément, nous  traitons les cas des résidus non nuls si il existe un pôle d'ordre $b_{i}\geq2$ et des résidus non tous colinéaires si tous les pôles sont simples.
\smallskip
\par
L'objet géométrique clef de la démonstration est le suivant. Soit $v:=(v_1,\ldots, v_{t})$ un uplet avec $v_{i}\neq 0$ et $\sum_{i=1}^{t}v_{i}=0$. Nous supposerons, quitte à permuter les indices, que l'argument des vecteurs $v_{1},\ldots,v_{t}$ est décroissant dans $\left]-\pi,\pi\right]$.
Le {\em polygone résiduel } $\mathfrak{P}(v)$ est le polygone obtenu en concaténant les vecteurs $v_{1}, \ldots,v_{t}$ dans cet ordre. Remarquons que~$\mathfrak{P}(v)$ est un polygone convexe, éventuellement dégénéré, c'est à dire d'intérieur vide.

Nous commençons par traiter le cas des résidus non tous colinéaires. Rappelons que deux éléments $v_{1}$ et $v_{2}$ de $\CC$ sont {\em colinéaires} si au moins un des $v_{i}$ est zéro ou s'il existe $\alpha\in \RR$ tel que $v_{1} = \alpha v_{2}$.
\begin{lem}\label{lem:gzerogen1}
 Soit $\mu:=(a;-b_{1},\dots,-b_{p};(-1^{s}))$ une partition  de $-2$. Si $r=(r_{1}, \dots , r_{p+s})\in\espres[0](\mu)$ est  un élément de l'espace résiduel de $\omoduli[0](\mu)$ qui possède deux éléments non colinéaires,  alors $r$ est dans l’image de l'application résiduelle  $\appres[0](\mu)$.
\end{lem}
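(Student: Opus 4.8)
The plan is to build the genus-zero differential directly from the flat bricks introduced in Section~\ref{sec:briques}, using the residue configuration $r=(r_1,\dots,r_{p+s})$ as the data prescribing the polar parts. Since the differential lives on $\mathbb{P}^1$ and has a single zero of order $a$, the idea is to assemble the flat surface out of one block carrying the zero together with the polar parts realizing each of the $p+s$ poles. Each residue $r_i$ is a nonzero complex number (for the simple poles $r_i\in\mathbb{C}^\ast$, and for the higher-order poles $r_i$ may a priori be zero); by Lemma~\ref{lm:residu}, a polar part of order $b$ and type $\tau$ associated to vectors $(v_1,\dots,v_l;w_1,\dots,w_{l'})$ realizes a pole of order $-b$ and residue $\sum v_i-\sum w_j$, while a polar part of order $1$ associated to $(v_1,\dots,v_l)$ realizes a simple pole of residue $\sum v_i$. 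So the first step is to choose, for each $i$, vectors whose signed sum equals the prescribed $r_i$, which is always possible.

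\textbf{Using non-colinearity.} The hypothesis that two residues, say $r_1$ and $r_2$, are not colinear is exactly what lets me close up the flat surface into a genus-zero translation surface with the correct combinatorics. The residue theorem forces $\sum_i r_i=0$, and the polygon $\mathfrak P(r)$ obtained by concatenating the residue vectors is then a genuinely two-dimensional (non-degenerate) convex polygon precisely because the $r_i$ are not all colinear. The plan is to use this polygon as the scaffold: its boundary edges are identified with the outgoing directions of the polar parts, and the convexity/non-degeneracy guarantees that the boundary identifications can be carried out without self-intersection and that the resulting surface has a single conical singularity of the prescribed angle, i.e. a zero of order exactly $a$. The fact that the cyclic order of the $r_i$ around the polygon is compatible with the cyclic arrangement of the polar parts around the zero is what makes the gluing consistent.

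\textbf{Verifying the type and genus.} Once the surface is assembled, I would check that its combinatorial invariants match $\mu$. The total cone angle at the unique zero must be $2\pi(a+1)$, which follows from the angle deficits contributed by the polar parts summing correctly; the order-matching of each pole to $-b_i$ (resp. $-1$) comes directly from Lemma~\ref{lm:residu}; and the genus is forced to be $0$ by an Euler-characteristic count once one verifies there is a single zero and $p+s$ poles. The residues are correct by construction since each polar part was built to have the prescribed signed sum. This places $(X,\omega)$ in $\omoduli[0](\mu)$ with residue vector exactly $r$, showing $r$ lies in the image of $\appres[0](\mu)$.

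\textbf{Main obstacle.} The delicate point is not the residue bookkeeping but the planarity and connectedness of the gluing: I must arrange the polar parts around the zero so that the broken lines defining the bricks never self-intersect and so that the identifications produce a connected surface of genus zero with a \emph{single} singularity rather than several, or an unwanted handle. The non-colinearity of $r_1,r_2$ is the key resource here because it guarantees $\mathfrak P(r)$ has nonempty interior, giving enough angular room to order the residue directions strictly and attach the polar parts in a consistent cyclic order. The argument that this ordering can always be realized—especially when some higher-order residue $r_i$ happens to vanish, so that the corresponding direction is not constrained by $\mathfrak P(r)$—is the step I expect to require the most care, and I would handle it by placing the vanishing-residue polar parts (which are \emph{trivial} in the sense of Section~\ref{sec:briques}) in positions that do not disrupt the convex scaffold determined by the nonzero residues.
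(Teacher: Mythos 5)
Your strategy coincides with the paper's: build the (non-degenerate) residue polygon out of the \emph{nonzero} residues, attach to each pole $P_i$ with $r_i\neq0$ a polar part of order $b_i$ associated to $(r_i;\emptyset)$, and feed the zero-residue poles in as trivial polar parts. But what you have written is a plan rather than a proof: the two steps you defer are precisely the content of the paper's argument. First, the insertion of the zero-residue poles is not a delicate matter of ``angular positioning'' at all; the paper resolves it by a concrete chaining device. Each pole $P_j$ with $r_j=0$ receives a \emph{trivial} polar part associated to $(r_{i_j};r_{i_j})$ for some chosen nonzero residue $r_{i_j}$; these are concatenated by gluing the segment $r_{i_j}$ of one to the corresponding segment of the next, and the last segment in the chain is glued to the edge $r_{i_j}$ of the polygon $\mathfrak{P}(r^{\ast})$. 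Writing this down removes the ``main obstacle'' you flag, and no convexity of the scaffold beyond non-degeneracy of $\mathfrak{P}(r^{\ast})$ is ever invoked.

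Second, and more seriously, your topological verification is circular. Knowing that the surface has a single conical point and $p+s$ poles does \emph{not} force $g=0$ by an Euler-characteristic count: the relation $a'-\sum_j b_j - s = 2g-2$, where $a'$ is the order of the unique zero, determines $g$ only once you know $a'$ (equivalently, the cone angle), and computing that cone angle is exactly the bookkeeping you postponed; conversely, asserting that the cone angle is $2\pi(a+1)$ ``because the angle deficits sum correctly'' presupposes the conclusion. The paper replaces all of this by a short separation argument that is absent from your proposal: in the glued surface, every saddle connection is either a boundary edge of a polar part or a diagonal of $\mathfrak{P}(r^{\ast})$, so cutting along any saddle connection disconnects the surface. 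This property rules out two distinct zeros (an embedded arc joining two distinct singularities never disconnects a connected surface) and rules out positive genus (a handle would carry a non-separating cycle of saddle connections). That argument --- or, failing it, an honest vertex-edge-face count of your explicit gluing --- is what is needed to conclude that the constructed differential lies in $\omoduli[0](\mu)$; as it stands, the proposal does not establish this.
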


\begin{proof}
 Notons par $r^{\ast}$ l'ensemble des résidus non nuls.  
 Comme les $r_{i}$ ne sont pas colinéaires, le polygone résiduel  $\mathfrak{P}(r^{\ast})$ est non dégénéré. Pour tous les pôles~$P_{i}$ d'ordre~$-b_{i}$ ayant un résidu non nul~$r_{i}$, on prend une partie polaire d'ordre~$b_{i}$ associée à $(r_{i};\emptyset)$ (et de type arbitraire). Pour tous les pôles $P_{j}$ d'ordre $-b_{j}$ ayant un résidu nul, on prend une partie triviale d'ordre $b_{j}$ associée à $(r_{i_{j}};r_{i_{j}})$, où $r_{i_{j}}\neq0$ est le résidu (non nul) au pôle $P_{i_{j}}$. Les collages de ces parties polaires avec le polygone résiduel se font de la façon suivante.

\'Etant donnée une partie polaire non triviale associée à un pôle $P_{i}$. S'il existe un pôle~$P_{j}$ qui à une partie polaire associée à $(r_{i};r_{i})$, alors nous collons le segment~$r_{i}$ de $P_{i}$ au segment~$r_{i}$ du domaine basique négatif de $P_{j}$. Nous continuons ces collages jusqu'à ce qu'il n'y ait plus de pôles sans résidu avec une partie polaire associée à $(r_{i};r_{i})$. Puis nous collons le dernier segment $r_{i}$ au segment $r_{i}$ du polygone résiduel. Nous faisons de même pour tous les pôles de résidus non nuls. Cette construction est illustrée par la figure~\ref{fig:casgeneralgenrezero}.

 \begin{figure}[htb]
 \center
\begin{tikzpicture}[scale=1.2]

\begin{scope}[yshift=-.2cm]
\coordinate (a) at (0,0);
\coordinate (b) at (-1,0);
\coordinate (c) at (-1,-1);

\filldraw[fill=black!10] (a) -- (b)coordinate[pos=.5](d) -- (c)coordinate[pos=.5](e) -- (a) coordinate[pos=.5](f);

\fill (a)  circle (2pt);
\fill (b) circle (2pt);
\fill (c) circle (2pt);

\node[above] at (d) {$1$};
\node[left] at (e) {$2$};
\node[below right] at (f) {$3$};
\end{scope}

\begin{scope}[yshift=-.5cm]
\coordinate (a) at (-1,1);
\coordinate (b) at (0,1);

    \fill[fill=black!10] (a)  -- (b)coordinate[pos=.5](f) -- ++(0,1) --++(-1,0) -- cycle;
    \fill (a)  circle (2pt);
\fill[] (b) circle (2pt);
 \draw  (a) -- (b);
 \draw (a) -- ++(0,.9) coordinate (d)coordinate[pos=.5](h);
 \draw (b) -- ++(0,.9) coordinate (e)coordinate[pos=.5](i);
 \draw[dotted] (d) -- ++(0,.2);
 \draw[dotted] (e) -- ++(0,.2);
\node[above] at (f) {$1$};
\node[left] at (h) {$4$};
\node[right] at (i) {$4$};
\end{scope}

\fill[fill=black!10] (-2.8,-.5) coordinate (Q) circle (1.1cm);
\coordinate (a) at (-2.8,0);
\coordinate (b) at (-2.8,-1);
\fill (a)  circle (2pt);
\fill (b) circle (2pt);
\draw (a) -- (b)coordinate[pos=.5](d);

\node[left] at (d) {$2$};
\node[right] at (d) {$5$};

\fill[fill=black!10] (-5.3,-.5) coordinate (Q) circle (1.1cm);
\coordinate (a) at (-5.4,0);
\coordinate (b) at (-5.4,-1);
\fill (a)  circle (2pt);
\fill (b) circle (2pt);
    \fill[white] (a) --  (b)coordinate[pos=.5](d) -- ++(1.3,0) --++(0,1) -- cycle;
 \draw  (a) -- (b)coordinate[pos=.5](g);
 \draw (a) -- ++(1,0) coordinate (e)coordinate[pos=.5] (h);
 \draw (b) -- ++(1,0) coordinate (f)coordinate[pos=.5] (i);
 \draw[dotted] (f) -- ++(.2,0);
 \draw[dotted] (e) -- ++(.2,0);
 
 \node[right] at (g) {$5$};
\node[above] at (h) {$6$};
\node[below] at (i) {$6$};

 \begin{scope}[yshift=.1cm]
\fill[fill=black!10] (2,-.5) coordinate (Q) circle (1.3cm);
\coordinate (a) at (2.4,.1);
\coordinate (b) at (1.4,-.9);
\fill (a)  circle (2pt);
\fill (b) circle (2pt);
    \fill[white] (a) --  (b)coordinate[pos=.5](d) -- ++(0,1.8) --++(1,0) -- cycle;
 \draw  (a) -- (b)coordinate[pos=.5](g);
 \draw (a) -- ++(0,.5) coordinate (e)coordinate[pos=.6] (h);
 \draw (b) -- ++(0,1.4) coordinate (f)coordinate[pos=.6] (i);
 \draw[dotted] (f) -- ++(0,.2);
 \draw[dotted] (e) -- ++(0,.1);
 
 \node[below right] at (g) {$3$};
\node[left] at (h) {$9$};
\node[right] at (i) {$9$};
 
 \draw (a) -- ++(.7,0) coordinate[pos=.5] (j);
\node[above] at (j) {$8$};
\node[below] at (j) {$7$};
\end{scope}

\fill[fill=black!10] (4.6,-.5) coordinate (Q) circle (1.1cm);

\draw[] (Q) -- ++(1.1,0) coordinate[pos=.5](a);

\node[above] at (a) {$7$};
\node[below] at (a) {$8$};
\fill[] (Q) circle (2pt);

\end{tikzpicture}
\caption{Une différentielle dans la strate $\omoduli[0](4;-2,-2,-3;-1)$ dont les résidus sont $(0,i,-1-i,1)$.} \label{fig:casgeneralgenrezero}
\end{figure}

La différentielle associée à cette surface plate possède clairement les invariants désirés aux pôles. Vérifions maintenant qu'elle est de genre zéro et possède un unique zéro. Remarquons que si l'on coupe cette surface le long d'un lien selle, on obtient deux surfaces connexes disjointes. En effet, les liens selles correspondent soit au bord des parties polaires, soit aux diagonales du polygone résiduel. Une telle propriété implique qu'il existe un unique zéro, car s'il y en avait deux, un lien selle entre les deux ne déconnecterait pas la surface. De manière similaire, on en déduit que le genre de la surface est nul. 
\end{proof}

Nous traitons maintenant le cas des résidus colinéaires non tous nuls dans le cas où il existe au moins un pôle d'ordre supérieur ou égal à $2$.
\begin{lem}\label{lem:gzerogen2}
 Soit $\mu:=(a;-b_{1},\dots,-b_{p};(-1^{s}))$ une partition  de $-2$ telle que $p\neq0$. Un  élément $r=(r_{1}, \dots , r_{p+s})\in\espres[0](\mu)$
  non nul de l’espace résiduel est dans l’image de l'application résiduelle~$\appres[0](\mu)$.
\end{lem}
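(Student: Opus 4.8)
Le plan est de traiter d'abord le cas non colinéaire par le lemme précédent, puis de se ramener par homogénéité à des résidus réels, et enfin de remplacer le polygone résiduel dégénéré par un polygone non dégénéré fabriqué à l'aide du pôle non simple. Si les résidus $r_{1}, \dots, r_{p+s}$ ne sont pas tous colinéaires, le lemme~\ref{lem:gzerogen1} s'applique directement et $r$ est dans l'image de $\appres[0](\mu)$. Il reste donc le cas où tous les $r_{i}$ sont colinéaires. Comme l'application résiduelle est équivariante pour l'action de $\CC^{\ast}$ (multiplier $\omega$ par $\lambda$ multiplie tous les résidus par $\lambda$) et comme $r\neq 0$, on peut supposer après une telle multiplication que tous les $r_{i}$ sont réels. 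On dispose alors de résidus réels de somme nulle, ceux des pôles simples étant de plus non nuls.

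L'idée essentielle sera d'exploiter l'hypothèse $p\neq 0$. Nous fixerions un pôle $P_{1}$ d'ordre $-b_{1}$ avec $b_{1}\geq 2$ et de résidu réel $r_{1}$. Au lieu de lui associer une partie polaire définie par le seul vecteur réel $r_{1}$ (ce qui laisserait la configuration colinéaire), nous lui associerions une partie polaire d'ordre $b_{1}$ définie par un couple de vecteurs non réels $(v;w)$ avec $v-w=r_{1}$ et $\Ima(v)=\Ima(w)\neq 0$. D'après le lemme~\ref{lm:residu}, le résidu du pôle correspondant reste $v-w=r_{1}$, et le bord de cette partie polaire fournit deux segments non réels, $v$ et $-w$. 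En concaténant $v$, $-w$ et les résidus réels $r_{i}$ ($i\neq 1$) des autres pôles rangés par argument décroissant, on obtiendrait cette fois un polygone convexe \emph{non dégénéré} $\mathfrak{P}$, les parties imaginaires de $v$ et $-w$ se compensant tout en lui donnant un intérieur non vide. On reprendrait alors la construction par collages du lemme~\ref{lem:gzerogen1} : on colle à chaque côté réel de $\mathfrak{P}$ la partie polaire du pôle correspondant (triviale pour les pôles de résidu nul, exactement comme dans le lemme~\ref{lem:gzerogen1}), puis on colle les deux côtés $v$ et $-w$ respectivement au domaine positif et au domaine négatif de la partie polaire de $P_{1}$.

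Le calcul des ordres et des résidus aux pôles de la surface plate ainsi obtenue est alors immédiat par le lemme~\ref{lm:residu}. La partie délicate, et c'est là le principal obstacle, sera de contrôler le lieu singulier : il faut montrer que la surface est de genre zéro et ne possède qu'un \emph{unique} zéro, d'ordre $a$. Dans le lemme~\ref{lem:gzerogen1}, ceci découlait du fait que le polygone résiduel était non dégénéré et que tout lien selle y déconnectait la surface ; ici le pôle $P_{1}$ contribue \emph{deux} côtés au polygone, de sorte que le recollement de sa partie polaire le long de ces deux côtés risque a priori de créer une anse ou un second point conique. Le cœur de la preuve consistera donc à choisir $v$ et $w$ (avec même partie imaginaire et une partie réelle convenablement placée par rapport aux résidus positifs et négatifs) ainsi que l'ordre des collages, de façon que tout lien selle déconnecte encore la surface ; ceci forcera à nouveau le genre nul et l'unicité du zéro.

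Je m'attends à ce que l'analyse combinatoire des identifications des sommets du polygone $\mathfrak{P}$ soit l'étape qui demande le plus de soin, notamment pour exclure l'apparition d'un point conique parasite lorsque les pôles simples de résidus de signes opposés se retrouvent sur des côtés éloignés de $\mathfrak{P}$. En cas de difficulté persistante, une variante du plan consisterait à faire porter la brisure de colinéarité par plusieurs vecteurs $(v_{1},\dots,v_{l};\emptyset)$ non réels dans la partie polaire de $P_{1}$, dont la somme vaut $r_{1}$, afin de disposer d'un bord polygonal de $P_{1}$ assez riche pour que tous les sommets de $\mathfrak{P}$ se recollent en un seul zéro.
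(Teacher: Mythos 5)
Votre réduction est correcte et identique à celle du papier dans sa première moitié : cas non colinéaire réglé par le lemme~\ref{lem:gzerogen1}, puis normalisation à des résidus réels par l'action de $\CC^{\ast}$. En revanche, pour le cas colinéaire, votre construction diffère de celle du papier et comporte une lacune véritable, que vous signalez d'ailleurs vous-même : toute la difficulté est reportée sur la vérification que la surface obtenue est de genre zéro avec un \emph{unique} zéro, et cette vérification n'est pas une formalité. Pire, le critère que vous proposez pour la mener à bien --- choisir $v$, $w$ et l'ordre des collages de sorte que tout lien selle déconnecte la surface --- ne peut \emph{jamais} être satisfait par votre construction : le polygone $\mathfrak{P}$ et la partie polaire de $P_{1}$ sont recollés le long de \emph{deux} arêtes ($v$ et $-w$), donc couper le long de l'une d'elles laisse la surface connexe, puisque l'on passe d'un côté à l'autre de la coupure par l'autre arête. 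L'argument de déconnexion du lemme~\ref{lem:gzerogen1} est donc inutilisable tel quel pour votre surface.

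La lacune est réelle et pas seulement formelle, car la conclusion dépend d'un choix que vous ne faites jamais : le type $\tau$ de la partie polaire de $P_{1}$. Prenez $\mu=(b_{1}+b_{2}-2;-b_{1},-b_{2})$ et $r=(1,-1)$, avec $v-w=1$ et $\Ima v=\Ima w\neq0$. Votre polygone est un triangle de côtés $-1$, $v$, $-w$, recollé à la partie polaire de $P_{1}$ le long de $v$ et $w$ et à celle de $P_{2}$ le long du côté réel. Les sommets se répartissent alors en exactement deux classes, d'angles coniques $2\pi(b_{1}-\tau)$ et $2\pi(\tau+b_{2})$ : le genre est bien nul (le décompte d'Euler donne $V-E+F=2-3+3=2$), mais pour un type $\tau$ générique on obtient \emph{deux} zéros, d'ordres $b_{1}-\tau-1$ et $\tau+b_{2}-1$, c'est-à-dire une différentielle de la strate $\omoduli[0](b_{1}-\tau-1,\tau+b_{2}-1;-b_{1},-b_{2})$ et non de $\omoduli[0](a;-b_{1},-b_{2})$. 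Il faut imposer le type extrémal $\tau=b_{1}-1$ pour que l'une des deux classes devienne un point régulier, et il resterait encore à généraliser ce choix en présence de pôles de résidu nul et de pôles simples. Le papier contourne entièrement cette difficulté par une construction différente : il n'y a plus de polygone résiduel du tout, et c'est la partie polaire de $P_{1}$ qui joue le rôle de moyeu, en portant sur son bord \emph{tous} les résidus non nuls (les résidus négatifs, changés de signe, comme segments du domaine basique positif, et les résidus positifs comme segments du domaine basique négatif) ; chaque autre partie polaire est alors recollée à ce moyeu le long d'un unique segment, éventuellement à travers une chaîne de parties polaires triviales. Le graphe de recollement est un arbre, chaque lien selle déconnecte, et l'argument du lemme~\ref{lem:gzerogen1} s'applique mot pour mot, sans aucune discussion d'angles ni de types. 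Votre variante finale (plusieurs vecteurs non réels sur le bord de $P_{1}$) ne résout pas davantage le problème, car elle laisse subsister le double recollement entre le polygone et la partie polaire de $P_{1}$.
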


\begin{proof}
Par le lemme~\ref{lem:gzerogen1}, il suffit de considérer le cas où tous les $r_{i}$ sont colinéaires. Dans ce cas le polygone résiduel est dégénéré, mais comme les $r_{i}$ ne sont pas tous nuls ce polygone n'est pas réduit à un point. Sans perte de généralité, nous supposerons que les résidus sont réels. De plus, nous ordonnons les indices de telle sorte que l'ensemble des résidus non nuls $r^{\ast}=\left\{ r_{j_{1}},\dots,r_{j_{t}}\right\}$ satisfait $r_{j_{i}}<0$ pour $i\leq u$ et $r_{j_{i}}$ pour $u<i\leq t$. Nous notons $J=\left\{ j_{1},\dots,j_{t}\right\}$.

Prenons le pôle $P_{1}$ d'ordre $-b_{1}$. Si $r_{1}=0$, nous associons à ce pôle une partie polaire d'ordre $b_{i}$ associée aux vecteurs $(-r_{j_{1}},\dots,-r_{j_{u}};r_{j_{u+1}},\dots,r_{j_{t}})$. Si $r_{1}\neq 0$, on suppose que $r_{1}=r_{j_{1}}$ et on associe à ce pôle une partie polaire d'ordre $b_{1}$ associée aux vecteurs $(-r_{j_{2}},\dots,-r_{j_{u}};r_{j_{u+1}},\dots,r_{j_{t}})$.
Nous associons aux pôles $P_{j_{i}}$, pour $1\leq i\leq u$ si $1\notin J$ et $2\leq i\leq u$ si $1\in J$, une partie polaire d'ordre $b_{j_{i}}$ associée à $(\emptyset;-r_{j_{i}})$. Pour les $u< i\leq t$ nous prenons une partie polaire d'ordre $b_{i}$ associée à $(r_{i};\emptyset)$. Pour tous les pôles $P_{k}$ d'ordre $-b_{k}$ ayant un résidu nul, on prend une partie triviale d'ordre $b_{k}$ associée à $(\pm r_{i_{k}};\pm r_{i_{k}})$ avec $i_{k}\in J \setminus 1$. On colle les parties polaires triviales comme dans la preuve du lemme~\ref{lem:gzerogen1}. On obtient une union de surfaces plates à bord.  Il reste à coller les bords restants aux segments de la partie polaire de $P_{1}$.

La preuve du fait que la différentielle ainsi construite vérifie les propriétés désirées est en tout point similaire à la preuve de ce fait dans la démonstration du lemme~\ref{lem:gzerogen1}. 
\end{proof}

\subsection{Les résidus de tous les pôles sont nuls}
\label{sec:casreszero}
Nous complétons la preuve des cas i) et iii) du théorème~\ref{thm:geq0keq1}. Au vu des lemmes~\ref{lem:gzerogen1} et~\ref{lem:gzerogen2}, il reste à montrer le résultat suivant.
\begin{lem}\label{lem:condgzero}
Dans les strates $\omoduli[0](a_{1},\dots,a_{n};-b_{1},\dots,-b_{p})$ vérifiant $b_{1},\dots,b_{p} \geq 2$, l'image de l'application résiduelle ne contient pas l'origine $(0,\dots,0)$ si et seulement s'il existe un indice $i$ tel que $a_{i}>\sum_{j=1}^{p}b_{j}-(p+1)$.
\end{lem}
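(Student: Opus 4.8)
Posons $A=\sum_{i}a_{i}$, $B=\sum_{j}b_{j}$ et $N:=B-p=\sum_{j}(b_{j}-1)$. En genre nul $A-B=-2$, donc $B=A+2$; comme chaque $b_{j}\geq 2$ on a aussi $B\geq 2p$, c'est-à-dire $A\geq 2p-2$. La condition $a_{i}>B-(p+1)$ se réécrit $a_{i}+1>N$, de sorte que l'énoncé équivaut à: l'origine est dans l'image de $\appres[0](\mu)$ si et seulement si $a_{i}+1\leq N$ pour tout $i$.

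\textbf{Non-existence lorsque $a_{i}+1>N$.} Soit $(X,\omega)$ une différentielle de $\omoduli[0](a_{1},\dots,a_{n};-b_{1},\dots,-b_{p})$ dont tous les résidus sont nuls. Par le théorème des résidus (avec notre convention), l'intégrale de $\omega$ le long de tout lacet est nulle; ainsi $\omega=dF$ où $F$ est holomorphe sur $\PP^{1}$ privé des pôles et méromorphe en ces pôles, donc une fonction rationnelle $F\colon\PP^{1}\to\PP^{1}$. Au voisinage d'un pôle $P_{j}$ d'ordre $b_{j}$, une primitive de $\omega$ a un pôle d'ordre $b_{j}-1$; ce sont là les seuls pôles de $F$, d'où $\deg F=\sum_{j}(b_{j}-1)=N$. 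En un zéro $z_{i}$ d'ordre $a_{i}$ on a $F=F(z_{i})+c\,(z-z_{i})^{a_{i}+1}+\cdots$ avec $c\neq0$, donc $z_{i}$ est un point critique d'indice de ramification $a_{i}+1$. Comme l'indice de ramification en un point ne dépasse jamais le degré, $a_{i}+1\leq N$ pour tout $i$. Par contraposée, dès qu'un indice vérifie $a_{i}+1>N$, aucune telle différentielle n'existe et l'origine n'est pas atteinte. On obtient le même décompte par voie plate: la proposition~\ref{prop:coeurdege} ramène au cas d'un cœur dégénéré, où l'angle conique $2\pi(a_{i}+1)$ en $z_{i}$ se répartit entre les domaines polaires, chaque domaine polaire d'ordre $b_{j}$ ne pouvant absorber plus que son propre angle conique $2\pi(b_{j}-1)$.

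\textbf{Existence lorsque $a_{i}+1\leq N$ pour tout $i$.} La stratégie est de construire une différentielle à cœur dégénéré et à résidus tous nuls en recollant directement les briques de la section~\ref{sec:briques}. Pour chaque pôle on prend une partie polaire \emph{triviale} d'ordre $b_{j}$ dont tous les vecteurs de bord sont réels; par le lemme~\ref{lm:residu} elle porte un pôle d'ordre $-b_{j}$ de résidu nul. Le polygone résiduel étant ici réduit à un point, on recolle ces $p$ parties polaires les unes aux autres le long de leurs liens selles horizontaux, selon un motif planaire: par exemple, on place les zéros sur une droite en un chemin $z_{1}-\cdots-z_{n}$ dont on dédouble $p-1$ arêtes en bigones, créant ainsi exactement $p$ faces que l'on identifie aux $p$ domaines polaires. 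Comme au lemme~\ref{lem:gzerogen1}, on vérifie sur le motif de recollement que la surface est une sphère portant exactement les zéros voulus. Il reste à ajuster les longueurs pour que l'angle conique en chaque sommet recollé vaille $2\pi(a_{i}+1)$: l'angle conique total disponible dans les domaines polaires étant $2\pi N$, l'hypothèse $a_{i}+1\leq N$ (et la marge $A\geq 2p-2$) est précisément ce qui permet de loger l'angle requis en chaque zéro. On peut au préalable réduire le nombre de zéros grâce à l'éclatement de la proposition~\ref{prop:eclatZero}, qui préserve la nullité des résidus.

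\textbf{Principal obstacle.} Le point délicat sera la construction: il faudra réaliser \emph{exactement} les ordres $a_{i}$, sans zéro parasite, tout en obtenant une sphère et en faisant coïncider tous les angles coniques. De façon équivalente, via $\omega=dF$, il s'agit de montrer que la borne de ramification $a_{i}+1\leq N$, clairement nécessaire, est aussi \emph{suffisante} pour l'existence d'un revêtement ramifié de $\PP^{1}$ de degré $N$ ayant la ramification prescrite au-dessus de $\infty$ et aux points critiques, et aucune autre ramification — un problème d'existence à la Hurwitz que le recollement combinatoire des parties polaires résout concrètement.
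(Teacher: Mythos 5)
Your necessity direction is correct and takes a genuinely different route from the paper. Since in genus zero all periods of $\omega$ reduce to residues, the vanishing of all residues makes $F(z)=\int^{z}\omega$ a single-valued rational function of degree $N=\sum_{j}(b_{j}-1)$, and the bound $a_{i}+1\leq N$ is just ``ramification index $\leq$ degree''. The paper instead argues flat-geometrically: by Proposition~\ref{prop:coeurdege} one may assume the core is degenerate, cut the surface into half-planes, and count angles, using the zero-period condition to show each cone point meets each half-plane at most once. Your algebraic argument is shorter and self-contained, and is a real alternative for this half of the lemma.

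The existence direction, however, has a genuine gap --- one you flag yourself as the ``principal obstacle''. The sketch (glue trivial polar parts along a planar pattern, then ``adjust lengths so that the cone angle at each vertex is $2\pi(a_{i}+1)$'') is exactly the hard combinatorial content, and the auxiliary reduction you propose does not repair it: to produce $n$ zeros by blowing up (Proposition~\ref{prop:eclatZero}) a differential with fewer zeros and zero residues, you need the merged order to satisfy the same bound, i.e.\ $a_{1}+a_{2}\leq N-1$, and this can fail. For instance in $\omoduli[0](3,3,3;-2,-2,-2,-2,-3)$ one has $N-1=5$, every $a_{i}=3\leq 5$, so the origin must be realized, yet $a_{1}+a_{2}=6>5$, so no blow-up from a two-zero differential with zero residues is available. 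The paper treats precisely this regime separately: it shows that $a_{1}+a_{2}>\sum b_{j}-p-1$ forces one pole of order $-2$, and then builds the surface from a triangle glued to trivial polar parts, with a bipartite-graph argument distributing the angle contributions of the poles among the three zeros; only for $n\geq 4$ does the inequality $a_{1}+a_{2}\leq \sum b_{j}-p-1$ hold automatically for the two smallest orders, which is what makes the inductive blow-up reduction legitimate there. Equivalently, in your Hurwitz reformulation: the condition ``ramification index $\leq$ degree'' is clearly necessary, but its sufficiency for realizing the profile $(b_{1}-1,\dots,b_{p}-1)$ over $\infty$ together with critical points of indices $a_{i}+1$ and no other ramification is a nontrivial realizability statement, which your proposal asserts the gluing ``solves concretely'' without actually exhibiting the gluing. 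As it stands, the proposal proves one implication of the lemma and only restates the other.
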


\begin{proof}
Nous commençons par montrer que si l'origine est dans l'image de l'application résiduelle alors tous les zéros sont d'ordres inférieurs ou égaux à $\sum_{j=1}^p b_{j}-(p+1)$.
Soit~$\omega$ une différentielle sur la sphère de Riemann de la strate $\omoduli[0](a_{1},\dots,a_{n};-b_{1},\dots,-b_{p})$ dont les résidus aux pôles sont nuls et $S$ la surface plate associée. Nous pouvons supposer par la proposition~\ref{prop:coeurdege} que la surface $S$ possède un cœur dégénéré et des liens selles horizontaux. Coupons $S$ le long de ces liens selles et de toutes les demi-droites horizontales issues des singularités. On obtient une alors une union disjointe de demi-plan positifs et négatifs. De plus, remarquons que chaque pôle d'ordre $-b_{j}$ est associé à $b_{j}-1$ demi-plan positifs et  $b_{j}-1$ demi-plan négatifs.

Considérons une singularité conique, disons $z_{1}$ d'angle $2\pi(a_{1}+1)$. Comme les résidus aux pôles sont nuls, on obtient directement du théorème des résidus que tout chemin fermé  de $S$ possède une période nulle.
Cela implique que les points correspondant à $z_{1}$ peuvent apparaître au plus une fois par demi-plan. 
Comme l'angle à chaque sommet dans chaque demi plan est $\pi$ l'angle maximal de la singularité conique $z_{1}$ est $2\pi\sum_{j=1}^p (b_{j}-1)$. Cela est équivalent au fait que l'ordre $a_{1}$ de $z_{1}$ est inférieur ou égal à $\sum_{j=1}^p b_{j}-(p+1)$.
\smallskip
\par
Nous montrons maintenant que si tous les zéros d'une différentielle sont d'ordres inférieurs ou égaux à $\sum b_{j}-(p+1)$ alors l'origine est dans l'image de l'application résiduelle.
Considérons tout d'abord les strates  $\omoduli[0]\left(a_{1},a_{2};-b_{1},\dots,-b_{p}\right)$ ayant deux zéros avec $p-1\leq a_{1},a_{2}\leq \sum b_{j} -(p+1)$. Pour tous les pôles nous prenons une partie polaire triviale $S_{i}$ d'ordre $b_{i}$ et de type~$\tau_{i}$ associée à $(1;1)$. Nous choisissons les~$\tau_{i}$ tels que $\sum_{i}\tau_{i}=a_{1}+1$. Ce choix est possible car pour chaque $i$ l'inégalité $1\leq \tau_{i} \leq b_{i}-1$ implique en sommant sur les pôles que $p\leq\sum_{i}\tau_{i}\leq \sum b_{j}-p$. 

Ensuite, nous collons les bords des parties polaires de manière cyclique. Plus précisément, nous collons le segment supérieur de $S_{i}$ au segment inférieur de $S_{i+1}$ modulo $p$. Une telle construction est représentée sur la figure~\ref{fig:ordremax}. La surface plate ainsi obtenue possède deux singularités coniques et est de genre nul. De plus, l'angle de la singularité conique à gauche des liens selles est d'angle $2\pi\sum \tau_{i}$. La différentielle ainsi construite appartient à la strate  $\omoduli[0]\left(a_{1},a_{2};-b_{1},\dots,-b_{p}\right)$ et les résidus aux pôles sont nuls.

 \begin{figure}[htb]
\center
\begin{tikzpicture}[scale=1.2]

\begin{scope}[xshift=-6cm]
\fill[fill=black!10] (0,0) coordinate (Q) circle (1.1cm);

\draw[] (0,0) coordinate (Q) -- (1.1,0) coordinate[pos=.5](a);

\node[above] at (a) {$1$};
\node[below] at (a) {$3$};

\draw[] (Q) -- (-.5,0) coordinate (P) coordinate[pos=.5](c);

\fill (Q)  circle (2pt);

\fill[color=white!50!] (P) circle (2pt);
\draw[] (P) circle (2pt);
\node[above] at (c) {$a$};
\node[below] at (c) {$b$};
\end{scope}

\begin{scope}[xshift=-3.7cm]
\fill[fill=black!10] (0,0) coordinate (Q) circle (1.1cm);

\draw[] (0,0) -- (1.1,0) coordinate[pos=.5](a);

\node[above] at (a) {$2$};
\node[below] at (a) {$1$};
\fill (Q)  circle (2pt);
\end{scope}
\begin{scope}[xshift=-1.4cm]
\fill[fill=black!10] (0,0) coordinate (Q) circle (1.1cm);

\draw[] (0,0) -- (1.1,0) coordinate[pos=.5](a);

\node[above] at (a) {$3$};
\node[below] at (a) {$2$};
\fill (Q)  circle (2pt);
\end{scope}

\begin{scope}[xshift=2cm]
\fill[fill=black!10] (0,0)  circle (1.1cm);

\draw[] (0,0) coordinate (Q) -- (1.1,0) coordinate[pos=.5](a);

\node[above] at (a) {$4$};
\node[below] at (a) {$5$};

\draw[] (0,0) -- (-.5,0) coordinate (P) coordinate[pos=.5](c);

\fill (Q)  circle (2pt);

\fill[color=white!50!] (P) circle (2pt);
\draw[] (P) circle (2pt);
\node[above] at (c) {$b$};
\node[below] at (c) {$a$};
\end{scope}

\begin{scope}[xshift=4.3cm]
\fill[fill=black!10] (0,0) coordinate (Q) circle (1.1cm);
    
\draw[] (0,0) -- (1.1,0) coordinate[pos=.5](a);

\node[above] at (a) {$5$};
\node[below] at (a) {$4$};
\fill (Q)  circle (2pt);
\end{scope}
\end{tikzpicture}
\caption{Une différentielle dans $\omoduli[0](4,1;-3,-4)$ dont les résidus sont nuls.} \label{fig:ordremax}
\end{figure}
\smallskip
\par
Nous traitons maintenant les strates $\omoduli[0](a_{1},a_{2},a_{3};-b_{1},\dots,-b_{p})$, avec $a_{1}, a_{2}\leq a_{3}$. Il y a deux cas à considérer suivant que  $a_{1},a_{2}>\sum b_{i}-p-1$ ou non.
\par
Si $a_{1}+a_{2}\leq \sum b_{i}-p-1$ (et donc $p-1\leq a_{3} \leq \sum b_{i}-p-1$), alors il existe une différentielle à résidus nuls dans la strate $\omoduli[0](a_{1}+a_{2},a_{3};-b_{1},\dots,-b_{p})$. Donc en éclatant le zéro d'ordre $a_{1}+a_{2}$ en deux zéros d'ordres $a_{1}$ et $a_{2}$ par la proposition~\ref{prop:eclatZero}, nous obtenons la différentielle souhaitée.
\par
Supposons maintenant que $a_{1}+a_{2} > \sum b_{i}-p-1$ (ou de manière équivalente $a_{3}<p-1$). Dans le reste de la preuve nous noterons $b:=\sum b_{i}$. Remarquons que  $a_{1},a_{2}\leq a_{3}$ et $a_{1}+a_{2}+a_{3}=b-2$ impliquent que  $ 3a_{3} \geq b-2$. Nous obtenons donc que $3p>b+1$, ce qui implique que l'un des pôles est d'ordre~$-2$.
\par
Nous donnons maintenant la description d'une différentielle ayant les propriétés attendues. Cette construction est illustrée dans la figure~\ref{fig:troiszerossansres} dans le cas de  $\omoduli[0](3^3,-2^4,-3)$. Dans un premier temps, nous décrivons le procédé, puis nous ajusterons les constantes pour obtenir les singularités coniques souhaitées. 

Pour le pôle d'ordre $-2$, prenons une partie polaire triviale d'ordre $2$ associée à $(v_{1},v_{2};v_{3})$ avec $v_{3}=v_{1}+v_{2}$.  Pour chaque pôle $P_{i}$ nous prenons une partie polaire triviale d'ordre~$b_{i}$ de type $\tau_{i}$ associée  à  $(v_{j_{i}};v_{j_{i}})$ pour un $j_{i}\in\lbrace 1,2,3 \rbrace$. Puis nous collons le segment $v_{j_{i}}$ du triangle au segment $v_{j_{i}}$ correspondant au domaine basique négatif de cette partie polaire.  Cette opération ajoute une contribution angulaire de $2\pi\tau_{i}$ et $2\pi(b_{i}-\tau_{i})$ aux singularités coniques correspondant aux sommets du segment $v_{j_{i}}$. Nous faisons de même pour tous les pôles jusqu'à obtenir une surface plate $S_{1}$ dont le bord est composé des trois segments $v_{j}$. Nous prenons maintenant le triangle $v_{1}v_{2}v_{3}$ et collons par translation ses trois arêtes au bord de $S_{1}$.
La surface plate ainsi obtenue est de genre zéro, possède trois zéros distincts et n'a pas de résidus aux pôles. 
\par
Il reste à ajuster le choix  des segments $v_{j_{i}}$ et des types $\tau_{i}$ pour chaque pôle afin d'obtenir les  angles souhaités. La remarque clé est que chaque pôle $P_{i}$ contribue à exactement deux singularités coniques et que la contribution à chacune d'elles est d'angle compris entre $2\pi$ et $2\pi(b_{i}-1)$. Réciproquement, si cette condition est satisfaite, alors la construction précédente permet de construire la différentielle souhaitée.
\par
La situation peut donc être modélisée par un graphe biparti $\Gamma$ dont trois sommets~$A_{i}$ représentent les trois sommets du triangle $v_{1}v_{2}v_{3}$ et $p-1$ autres sommets $B_{i}$ représentent les pôles $P_{i}$ distincts du pôle avec la partie polaire associée au triangle. Il y a une arête entre les sommets $B_{i}$ et~$A_{j}$ pour chaque multiple de $2\pi$ de la contribution de $P_{i}$ à la singularité conique~$A_{j}$. Un exemple est schématisé dans la  figure~\ref{fig:troiszerossansres}. 
\begin{figure}[htb]
\center
\begin{tikzpicture}

\begin{scope}[xshift=-1.3cm]
\fill[fill=black!10] (0,0)  circle (1.3cm);
   
 \coordinate (A) at (1/1.71,0);
  \coordinate (B) at (120:1/1.71);
    \coordinate (C)  at (240:1/1.71);
    
    \fill (A)  circle (2pt);
\filldraw[fill=white] (B) circle (2pt);
\filldraw[fill=red] (C)  circle (2pt);

   \fill[color=white]     (A) -- (B) -- (C) --cycle;
 \draw[]     (A) -- (B) coordinate[pos=.4](a);
 \draw (B) -- (C) coordinate[pos=.5](b);
 \draw (C) -- (A)  coordinate[pos=.6](c);

\node[above] at (a) {$v_{1}^{1}$};
\node[left] at (b) {$v_{2}^{1}$};
\node[below ] at (c) {$v_{3}^{1}$};

\end{scope}

\begin{scope}[xshift=2cm]
 \coordinate (A) at (1/1.71,0);
  \coordinate (B) at (120:1/1.71);
    \coordinate (C)  at (240:1/1.71);
    
    \fill[fill=black!10]     (A) -- (B) -- (C) --cycle;

 \draw[]     (A) -- (B) coordinate[pos=.4](a);
 \draw (B) -- (C) coordinate[pos=.5](b);
 \draw (C) -- (A)  coordinate[pos=.6](c);

\node[above] at (a) {$v_{1}^{2}$};
\node[left] at (b) {$v_{2}^{2}$};
\node[below ] at (c) {$v_{3}^{3}$};


    \fill (A)  circle (2pt);

\filldraw[fill=white] (B) circle (2pt);
\filldraw[fill=red] (C)  circle (2pt);

\end{scope}

 \begin{scope}[xshift=-5cm]
\fill[fill=black!10] (0,0)  circle (1.1cm);
 
\draw[](0,-.7)coordinate(P) --(0,.3) coordinate[pos=.5](b)coordinate (Q) -- (0,1.1) coordinate[pos=.5](a);

\node[left] at (a) {$1$};
\node[right] at (a) {$2$};

\filldraw[fill=red] (P)  circle (2pt);

\filldraw[fill=white] (Q) circle (2pt);
\node[left] at (b) {$v_{2}^{2}$};
\node[right] at (b) {$v_{2}^{1}$};
\end{scope}
\begin{scope}[yshift=-2.4cm]

\begin{scope}[xshift=-5cm]
\fill[fill=black!10] (0,0)  circle (1.1cm);

\draw[] (0,0) coordinate (Q1) -- (0,1.1) coordinate[pos=.5](a);

\node[left] at (a) {$2$};
\node[right] at (a) {$1$};


\filldraw[fill=white] (Q1) circle (2pt);

\end{scope}


\begin{scope}[xshift=-2.5cm]
\fill[fill=black!10] (0,0) coordinate (Q) circle (1.1cm);
    
    \begin{scope}[rotate=120]
\draw[]     (0,-1/2) -- (0,1/2)coordinate[pos=.6](a);
\node[above] at (a) {$v_{3}^{1}$};
\node[xshift=5pt,yshift=-5pt] at (a) {$v_{3}^{2}$};

    \fill (0,-1/2)  circle (2pt);
    \filldraw[fill=red] (0,1/2)  circle (2pt);

    \end{scope}

\end{scope}

\begin{scope}[xshift=0cm]
\fill[fill=black!10] (0,0) coordinate (Q) circle (1.1cm);
    
    \begin{scope}[rotate=120]
\draw[]     (0,-1/2) -- (0,1/2)coordinate[pos=.6](a);
\node[above] at (a) {$v_{3}^{2}$};
\node[xshift=5pt,yshift=-5pt] at (a) {$v_{3}^{3}$};

    \fill (0,-1/2)  circle (2pt);
    \filldraw[fill=red] (0,1/2)  circle (2pt);

    \end{scope}

\end{scope}

\begin{scope}[xshift=2.5cm]
\fill[fill=black!10] (0,0) coordinate (Q) circle (1.1cm);
      
    \begin{scope}[rotate=240]
\draw[]     (0,-1/2) -- (0,1/2)coordinate[pos=.4](a);
\node[below] at (a) {$v_{1}^{1}$};
\node[xshift=5pt,yshift=5pt] at (a) {$v_{1}^{2}$};

\fill (0,1/2)  circle (2pt);
\filldraw[fill=white] (0,-1/2) circle (2pt);
    \end{scope}
\end{scope}
\end{scope}

\begin{scope}[xshift=6cm,yshift=-.5cm]
\filldraw[fill=red] (-1,0)coordinate (A1) circle (2pt);
\fill (0,0)coordinate (A2)  circle (2pt);
\filldraw[fill=white] (1,0)coordinate (A3) circle (2pt);

\fill (-1.5,-2)coordinate (B1)  circle (2pt);
\fill (-.5,-2)coordinate (B2)  circle (2pt);
\fill (.5,-2)coordinate (B3)  circle (2pt);
\fill (1.5,-2)coordinate (B4)  circle (2pt);

\draw (A1) -- (B1);
\draw (A1) -- (B2);
\draw (A1) -- (B3);
\draw (A2) -- (B2);
\draw (A2) -- (B3);
\draw (A2) -- (B4);
\draw (A3) -- (B4);
\draw (A3) .. controls ++(-90:.5) and ++(0:.5) .. (B1);
\draw (A3) .. controls ++(180:.5) and ++(90:.5) .. (B1);

\filldraw[fill=red] (A1) circle (2pt);
\filldraw[fill=white] (A3) circle (2pt);
    \end{scope}

\end{tikzpicture}
\caption{Une différentielle dans $\omoduli[0]((3^3);(-2^4),-3)$ avec résidus nuls et son graphe $\Gamma$.} \label{fig:troiszerossansres}
\end{figure}

Il suffit de montrer qu'il est possible de distribuer les arêtes de telle façon que les sommets~$B_{i}$ soient de valence $b_{i}$, connectés à précisément deux sommets $A_{j}$, et que la valence de $A_{j}$ soit $a_{j}$.
Une telle distribution peut être obtenue de la façon suivante. Rappelons que le plus grand zéro (que nous supposerons être $z_{3}$) est d'ordre $a_{3}\leq p-1$. Nous partons du graphe où tous les sommets $B_{i}$ sont connectés à $A_{1}$ par exactement une arête et toutes les autres arêtes connectent $A_{2}$. Prenons un sommet $B_{i}$ quelconque. Il y a $b_{i}-1$ arêtes entre $B_{i}$ et $A_{2}$. Si la valence de $A_{2}$ moins $b_{i}-1$ est supérieure ou égale à~$a_{2}$, alors nous remplaçons les $b_{i}-1$ arêtes entre $B_{i}$ et  $A_{2}$ par $b_{i}-1$ arêtes entre $B_{i}$ et~$A_{3}$. Nous recommençons alors cette opération jusqu'à l'indice $i_{0}$ tel que la valence de $A_{2}$ moins $b_{i_{0}}-1$ soit strictement inférieure à~$a_{2}$. Dans ce cas, nous remplaçons des arêtes entre $B_{i_{0}}$ et $A_{2}$ par des arêtes entre $B_{i_{0}}$ et~$A_{3}$, de telle façon que la valence de $A_{2}$  soit égale à~$a_{2}$. Le sommet $B_{i_{0}}$ est alors connecté aux trois sommets $A_{i}$. Donc nous enlevons l'arête entre $A_{1}$ et $B_{i_{0}}$ pour la mettre entre $A_{3}$ et $B_{i_{0}}$. Comme $a_{1}$ est strictement plus petit que $p-1$, cette opération est toujours possible.  Pour terminer, nous remplaçons autant d'arêtes que nécessaire entre $A_{1}$ et les $B_{j}$ pour $j > i_{0}$ pour les connecter à $A_{3}$ afin que la valence de $A_{1}$ soit~$a_{1}$. Notons qu'il existe toujours assez d'arêtes entre $A_{1}$ et les $B_{j}$ pour obtenir la valence $a_{3}$ à $A_{3}$ car il y a un total de $p-1$ sommets~$B_{i}$ et $a_{3}\leq p-1$.

Pour conclure le cas des différentielles de genre zéro avec des résidus nuls, nous considérons les strates ayant $n\geq4$ zéros.  Soient~$a_{1}$ et~$a_{2}$ les zéros de plus petits ordres. En notant  $b=\sum_{i} b_{i}$ on~a  
\begin{equation*}
 a_{1}+a_{2} \leq \frac{2}{n}\left(b-2\right)\leq \sum_{i}(b_{i}-1) -\frac{4}{n} < b -p,
\end{equation*}
 où la deuxième égalité s'obtient en remarquant que  $\tfrac{2}{n} \leq \tfrac {1}{2}$ et $b_{i}\geq2$ impliquent que $\tfrac{2b_{i}}{n}\leq b_{i}-1$. On en déduit que $ a_{1}+a_{2} \leq b -p - 1$ et donc que le cas $n\geq 4$ s'obtient en éclatant un zéro d'une différentielle ayant $n-1$ zéros d'ordres $a_{1}+a_{2}$, $a_{i}$ pour $i\geq 3$ et $p$ pôles d'ordres~$-b_{j}$.
\end{proof}

\subsection{Les pôles sont tous simples}
\label{sec:caslier}
Dans cette section, nous montrons le point ii) du théorème~\ref{thm:geq0keq1} qui traite du cas des différentielles en genre zéro n'ayant que des pôles simples. 

\smallskip
\par
Nous commençons par le cas des strates $\omoduli[0](s-2;(-1^{s}))$. Nous définissons deux types de graphes que nous utilisons de manière essentielle dans la preuve.

\begin{defn}
  Un {\em graphe de connexion} est un arbre biparti connexe $\Gamma$ possédant $A$ arêtes, dont les sommets sont partitionnés en $\Gamma_{-}\cup\Gamma_{+}$ et auxquels sont attribués des poids réels strictement positifs, tels que:
\begin{enumerate}[i)]
 \item la somme des poids des sommets de $\Gamma_{+}$ est égale à celle des poids des sommets de~$\Gamma_{-}$;
 \item considérant l'opération qui consiste à retirer une feuille et soustraire le poids de ce sommet à celui qui lui est relié, appliquant cette opération entre une et $A-1$ fois à $\Gamma$, on obtient alors des graphes dont les poids sont strictement positifs.
\end{enumerate}
\end{defn}

\begin{defn}
Soit $r_{1},\dots,r_{s}$ des nombres complexes $\RR$-colinéaires de somme nulle,  un {\em graphe associé aux~$r_{i}$} est un arbre biparti connexe vérifiant les propriétés suivantes.  \'Etant donné $\alpha\in\CC^{\ast}$ tel que $r_{i}':=\alpha r_{i}\in\RR^{\ast}$ pour tout $i\leq s$, les sommets de $\Gamma_{+}$ (resp. $\Gamma_{-}$) sont en bijection avec les $r_{i}'$ positifs (resp. négatifs) et le poids du sommet correspondant à $r_{i}$ est $|r_{i}'|$.
\end{defn}

L'image de l'application résiduelle des strates $\omoduli[0](s-2;(-1^{s}))$ est décrite par le résultat suivant en terme de graphes.
\begin{lem}\label{lem:gzeropolesimples}
 Soit $\omoduli[0](s-2;(-1^{s}))$ une strate de genre zéro avec $s$ pôles simples et un unique zéro d'ordre $s-2$. Les nombres complexes $(r_{1},\ldots,r_{s})$ sont dans l'image de l'application résiduelle si et seulement si 
 l'une des propriétés est satisfaite.
 \begin{enumerate}
  \item Les nombres $(r_{1},\ldots,r_{s})$ ne sont pas colinéaires.
  \item  Les nombres $(r_{1},\ldots,r_{s})$ sont colinéaires et il existe un graphe associé aux $r_{i}$ qui est un graphe de connexion.
 \end{enumerate}
\end{lem}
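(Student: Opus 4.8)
Je traite d'abord le cas~(1). Si les $r_{i}$ ne sont pas tous colinéaires, alors le $s$-uplet $(r_{1},\dots,r_{s})$ possède deux éléments non colinéaires et le lemme~\ref{lem:gzerogen1}, appliqué avec $p=0$, montre aussitôt que $(r_{1},\dots,r_{s})$ est dans l'image de $\appres[0](s-2;(-1^{s}))$. Il reste donc à supposer les $r_{i}$ colinéaires et à prouver l'équivalence entre réalisabilité et la condition~(2). Comme l'image de l'application résiduelle est invariante sous l'action de $\mathrm{GL}^{+}(2,\RR)$ --- qui agit à la fois sur la surface et, par la même matrice, sur les résidus --- et comme la notion de graphe associé aux $r_{i}$ ne dépend des résidus qu'à multiplication près par un scalaire $\alpha\in\CC^{\ast}$, on peut supposer sans perte de généralité que tous les $r_{i}$ sont réels et poser $\alpha=1$. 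L'idée directrice est le dictionnaire suivant: un graphe associé aux $r_{i}$ code l'adjacence des domaines polaires d'une différentielle à liens selles horizontaux, les arêtes correspondant aux liens selles, le poids $|r_{i}|$ d'un sommet à la longueur du bord du demi-cylindre infini attaché au pôle $P_{i}$, et la longueur d'un lien selle à celle de l'arête correspondante.

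Montrons que l'existence d'un graphe de connexion $\Gamma$ associé aux $r_{i}$ entraîne la réalisabilité. L'observation combinatoire clé est que, sur un arbre, la donnée aux sommets des sommes $\sum_{e\ni i}\ell_{e}=|r_{i}|$ détermine de façon unique des longueurs d'arêtes $\ell_{e}$ (la condition~(i) assurant leur compatibilité), et que ces $\ell_{e}$ sont précisément les poids rencontrés en épluchant successivement les feuilles de $\Gamma$; la condition~(ii) équivaut donc exactement à la positivité de toutes les longueurs $\ell_{e}$. Disposant de tels $\ell_{e}>0$, j'attache à chaque pôle $P_{i}$ une partie polaire d'ordre~$1$ de la forme $C(v_{1},\dots,v_{\deg(i)})$ dont le bord, de longueur totale $|r_{i}|$, est subdivisé en les liens selles de longueurs $\ell_{e}$ pour $e\ni i$; le lemme~\ref{lm:residu} garantit que le résidu associé est bien $r_{i}$. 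On recolle alors ces briques le long des arêtes de $\Gamma$ par translation. La surface obtenue est connexe puisque $\Gamma$ l'est, et, comme dans la preuve du lemme~\ref{lem:gzerogen1}, le fait que tout lien selle --- étant une arête d'un arbre --- déconnecte la surface entraîne qu'elle est de genre zéro et ne possède qu'un unique zéro, nécessairement d'ordre $s-2$ d'après la relation $\sum\ord=-2$. On obtient ainsi une différentielle de $\omoduli[0](s-2;(-1^{s}))$ de résidus prescrits.

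Réciproquement, supposons $(r_{1},\dots,r_{s})$ réalisable par $(X,\omega)$. Quitte à agir par $\mathrm{GL}^{+}(2,\RR)$, le lemme~2.2 de \cite{tahar} permet de supposer que la surface plate $S$ est à cœur dégénéré et à liens selles horizontaux; elle possède alors exactement $s-1$ liens selles et le complémentaire de son cœur est la réunion des $s$ domaines polaires, chacun étant un demi-cylindre infini de bord de longueur $|r_{i}|$. Je forme le graphe $\Gamma$ dont les sommets sont les pôles et dont chaque arête est un lien selle reliant les deux domaines polaires qu'il sépare. Comme $S$ est de genre zéro et n'a qu'un seul zéro, tout lien selle déconnecte $S$, de sorte que $\Gamma$ est un arbre. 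De plus $\Gamma$ est biparti pour la partition par signe du résidu: un demi-cylindre infini part vers le haut ou vers le bas selon le signe de $r_{i}$, et deux domaines polaires adjacents le long d'un même lien selle horizontal sont situés de part et d'autre de celui-ci, donc ont des résidus de signes opposés (en particulier ils sont distincts, ce qui exclut les boucles). Enfin, les longueurs des liens selles fournissent en chaque sommet des poids positifs d'arêtes de somme $|r_{i}|$; d'après l'équivalence rappelée ci-dessus, $\Gamma$ est un graphe de connexion associé aux $r_{i}$, ce qui établit~(2).

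La principale difficulté est d'établir rigoureusement ce dictionnaire géométrico-combinatoire. D'une part, il faut justifier que le graphe d'adjacence des domaines polaires est bien un arbre biparti: le caractère d'arbre résulte de l'argument de déconnexion déjà employé pour le lemme~\ref{lem:gzerogen1}, tandis que la bipartition repose sur un argument d'orientation reliant le signe de $r_{i}$ au sens dans lequel le demi-cylindre s'échappe à l'infini. D'autre part, il faut vérifier que l'épluchage des feuilles calcule effectivement les longueurs des liens selles et que la condition~(ii) équivaut à leur positivité. Le cas dégénéré $s=2$, où il n'y a pas de véritable zéro, où $S$ est un cylindre bi-infini sans lien selle et où $\Gamma$ se réduit à une unique arête joignant deux sommets de poids égaux, se traite à part et fournit le cas de base.
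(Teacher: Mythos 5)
Your proof is correct and follows essentially the same approach as the paper's: half-infinite cylinders glued along the edges of a bipartite tree for sufficiency, and the adjacency graph of the polar domains of a degenerate-core representative for necessity. The only differences are organizational — you solve globally for the edge lengths on the tree and identify condition (ii) with their positivity, where the paper instead peels leaves inductively (gluing step by step in one direction, cutting along a boundary saddle connection in the other), and you make explicit, via Lemma 2.2 of \cite{tahar} and the $\mathrm{GL}^{+}(2,\RR)$-invariance of the image, the degenerate-core decomposition that the paper's argument uses implicitly.
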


Avant de passer à la preuve de ce résultat, nous illustrons ces notions dans un exemple.
Nous illustrons maintenant les concepts introduits dans un exemple.
\begin{ex} \label{ex:graphedeconnexion}
 Nous illustrons la correspondance entre une différentielle de $\omoduli[0](5;(-1^7))$ et le graphe de connexion associé. La figure~\ref{fig:totliermaisdansimage} montre que l'image de l'application résiduelle de  cette strate contient les résidus $(3,(1^{3}),(-2^{3}))$.
 
\begin{figure}[htb]
\begin{tikzpicture}[scale=1.1]
\begin{scope}[xshift=-6cm]
\coordinate (a) at (-1,1);
\coordinate (b) at (2,1);
\coordinate (c) at (0,1);
\coordinate (d) at (1,1);

    \fill[fill=black!10] (a)  -- (c)coordinate[pos=.5](f)-- (d)coordinate[pos=.5](g)-- (b)coordinate[pos=.5](j) -- ++(0,1.2) --++(-3,0) -- cycle;
    \fill (a)  circle (2pt);
\fill[] (b) circle (2pt);
    \fill (c)  circle (2pt);
\fill[] (d) circle (2pt);
 \draw  (a) -- (b);
 \draw (a) -- ++(0,1.1) coordinate (d)coordinate[pos=.5](h);
 \draw (b) -- ++(0,1.1) coordinate (e)coordinate[pos=.5](i);
 \draw[dotted] (d) -- ++(0,.2);
 \draw[dotted] (e) -- ++(0,.2);
\node[below] at (f) {$1$};
\node[below] at (g) {$2$};
\node[below] at (j) {$3$};
\end{scope}

\begin{scope}[xshift=-2.5cm]
\coordinate (a) at (-1,1);
\coordinate (b) at (0,1);

    \fill[fill=black!10] (a)  -- (b)coordinate[pos=.5](f) -- ++(0,1.2) --++(-1,0) -- cycle;
    \fill (a)  circle (2pt);
\fill[] (b) circle (2pt);
 \draw  (a) -- (b);
 \draw (a) -- ++(0,1.1) coordinate (d)coordinate[pos=.5](h);
 \draw (b) -- ++(0,1.1) coordinate (e)coordinate[pos=.5](i);
 \draw[dotted] (d) -- ++(0,.2);
 \draw[dotted] (e) -- ++(0,.2);
\node[below] at (f) {$4$};
\end{scope}

\begin{scope}[xshift=-1cm]
\coordinate (a) at (-1,1);
\coordinate (b) at (0,1);

    \fill[fill=black!10] (a)  -- (b)coordinate[pos=.5](f) -- ++(0,1.2) --++(-1,0) -- cycle;
    \fill (a)  circle (2pt);
\fill[] (b) circle (2pt);
 \draw  (a) -- (b);
 \draw (a) -- ++(0,1.1) coordinate (d)coordinate[pos=.5](h);
 \draw (b) -- ++(0,1.1) coordinate (e)coordinate[pos=.5](i);
 \draw[dotted] (d) -- ++(0,.2);
 \draw[dotted] (e) -- ++(0,.2);
\node[below] at (f) {$5$};
\end{scope}

\begin{scope}[xshift=.5cm]
\coordinate (a) at (-1,1);
\coordinate (b) at (0,1);

    \fill[fill=black!10] (a)  -- (b)coordinate[pos=.5](f) -- ++(0,1.2) --++(-1,0) -- cycle;
    \fill (a)  circle (2pt);
\fill[] (b) circle (2pt);
 \draw  (a) -- (b);
 \draw (a) -- ++(0,1.1) coordinate (d)coordinate[pos=.5](h);
 \draw (b) -- ++(0,1.1) coordinate (e)coordinate[pos=.5](i);
 \draw[dotted] (d) -- ++(0,.2);
 \draw[dotted] (e) -- ++(0,.2);
\node[below] at (f) {$6$};
\end{scope}
\begin{scope}[xshift=-5.75cm,yshift=-1cm]
\coordinate (a) at (-1,1);
\coordinate (b) at (1,1);
\coordinate (c) at (0,1);

    \fill[fill=black!10] (a)  -- (b)coordinate[pos=.25](f)coordinate[pos=.75](g) -- ++(0,-1.2) --++(-2,0) -- cycle;
    \fill (a)  circle (2pt);
\fill[] (b) circle (2pt);
    \fill (c)  circle (2pt);
 \draw  (a) -- (b);
 \draw (a) -- ++(0,-1.1) coordinate (d)coordinate[pos=.5](h);
 \draw (b) -- ++(0,-1.1) coordinate (e)coordinate[pos=.5](i);
 \draw[dotted] (d) -- ++(0,-.2);
 \draw[dotted] (e) -- ++(0,-.2);
\node[above] at (f) {$1$};
\node[above] at (g) {$4$};
\end{scope}

\begin{scope}[xshift=-3.25cm,yshift=-1cm]
\coordinate (a) at (-1,1);
\coordinate (b) at (1,1);
\coordinate (c) at (0,1);

    \fill[fill=black!10] (a)  -- (b)coordinate[pos=.25](f)coordinate[pos=.75](g) -- ++(0,-1.2) --++(-2,0) -- cycle;
    \fill (a)  circle (2pt);
\fill[] (b) circle (2pt);
    \fill (c)  circle (2pt);
 \draw  (a) -- (b);
 \draw (a) -- ++(0,-1.1) coordinate (d)coordinate[pos=.5](h);
 \draw (b) -- ++(0,-1.1) coordinate (e)coordinate[pos=.5](i);
 \draw[dotted] (d) -- ++(0,-.2);
 \draw[dotted] (e) -- ++(0,-.2);
\node[above] at (f) {$2$};
\node[above] at (g) {$5$};
\end{scope}

\begin{scope}[xshift=-.75cm,yshift=-1cm]
\coordinate (a) at (-1,1);
\coordinate (b) at (1,1);
\coordinate (c) at (0,1);

    \fill[fill=black!10] (a)  -- (b)coordinate[pos=.25](f)coordinate[pos=.75](g) -- ++(0,-1.2) --++(-2,0) -- cycle;
    \fill (a)  circle (2pt);
\fill[] (b) circle (2pt);
    \fill (c)  circle (2pt);
 \draw  (a) -- (b);
 \draw (a) -- ++(0,-1.1) coordinate (d)coordinate[pos=.5](h);
 \draw (b) -- ++(0,-1.1) coordinate (e)coordinate[pos=.5](i);
 \draw[dotted] (d) -- ++(0,-.2);
 \draw[dotted] (e) -- ++(0,-.2);
\node[above] at (f) {$3$};
\node[above] at (g) {$6$};
\end{scope}

\begin{scope}[xshift=3cm,yshift=1.5cm]
\filldraw[fill=white] (-1,-2)coordinate (A1) circle (2pt);\node[below] at (A1) {$2$};
\filldraw[fill=white] (0,-2)coordinate (A2)  circle (2pt);\node[below] at (A2) {$2$};
\filldraw[fill=white] (1,-2)coordinate (A3) circle (2pt);\node[below] at (A3) {$2$};

\fill (-1.5,0)coordinate (B1)  circle (2pt);\node[above] at (B1) {$3$};
\fill (-.5,0)coordinate (B2)  circle (2pt);\node[above] at (B2) {$1$};
\fill (.5,0)coordinate (B3)  circle (2pt);\node[above] at (B3) {$1$};
\fill (1.5,0)coordinate (B4)  circle (2pt);\node[above] at (B4) {$1$};

\draw (A1) -- (B1);
\draw (A2) -- (B1);
\draw (A3) -- (B1);
\draw (A1) -- (B2);
\draw (A2) -- (B3);
\draw (A3) -- (B4);

\filldraw[fill=white] (-1,-2)coordinate (A1) circle (2pt);
\filldraw[fill=white] (0,-2)coordinate (A2)  circle (2pt);
\filldraw[fill=white] (1,-2)coordinate (A3) circle (2pt);
    \end{scope}

\end{tikzpicture}
\caption{Une différentielle dans $\omoduli[0](5;(-1^7))$ avec résidus $\left(3,(1^{3}),(-2^{3})\right)$ et son graphe de connexion.} \label{fig:totliermaisdansimage}
\end{figure}
\end{ex}

\begin{proof}[Démonstration du lemme~\ref{lem:gzeropolesimples}]
Soit $r:=(r_{1},\dots,r_{s})$ un élément de l'espace résiduel de la strate $\omoduli[0](s-2;(-1^{s}))$ avec $s\geq2$. Si tous les résidus ne sont pas colinéaires alors le polygone résiduel introduit dans la section~\ref{sec:geng0} est non dégénéré. On obtient la différentielle abélienne avec les invariants souhaités en collant des cylindres infinis à toutes les arêtes de ce polygone. 

Nous supposerons donc à partir de maintenant que les $r_{i}$ sont colinéaires. Supposons qu'il existe un graphe $\Gamma$ associé à $r$ qui soit un graphe de connexion. On construit une différentielle de la façon suivante. Pour chaque résidu $r_{i}$ on prend une partie polaire d'ordre $1$ associée à $r_{i}$. Considérons une feuille de $\Gamma$. On peut coller le segment au bord de la partie polaire correspondante au segment au bord de la partie polaire correspondant à l'autre sommet de l'arête. Puis on enlève la feuille du graphe et le poids de cette feuille à l'autre sommet. On recommence cette procédure pour une feuille du nouveau graphe.  Cette opération est faite de manière inductive jusqu'à ce que le graphe soit réduit à un sommet.

Nous justifions maintenant que cette opération est toujours possible. Supposons tout d'abord que le graphe possède strictement plus de deux sommets. Comme par le point~ii) le poids d'une feuille est toujours strictement plus petit que le poids du sommet auquel elle est reliée, le collage est toujours possible. Enfin le point~i) dit que la différence des poids est nulle, ce qui implique que la surface obtenue est sans bord. On vérifie maintenant que cette surface plate est de genre zéro et possède un unique zéro. La surface est formée de domaines polaires reliés par des liens selles tous horizontaux. La surface est de genre zéro car s'il existait un lacet fermé homotopiquement non trivial, cela impliquerait un chemin fermé sur le graphe de connexion. Or celui-ci est un arbre. L'arbre étant connexe, la surface l'est également. Il reste à montrer que les extrémités des liens selles constituent une unique singularité conique. En tant que graphe plongé dans la surface, le graphe des liens selles est dual de celui défini par le graphe de connexion avec des arcs reliant les pôles. Le graphe de connexion étant un arbre, il ne définit qu'une seule face. Par conséquent, le graphe des liens selles n'a qu'un seul sommet.
\smallskip
\par
Supposons maintenant qu'il existe une différentielle $\omega$ dans $\omoduli[0](s-2;(-1^{s}))$ dont les résidus sont $(r_{1},\dots,r_{s})$. Supposons que les résidus soient colinéaires, nous les supposerons réels sans perte de généralité. Nous construisons un graphe associé aux $r_{i}$ qui est un graphe de connexion. Les sommets de $\Gamma_{+}$ (resp. $\Gamma_{-}$) sont associés aux pôles de $\omega$ dont le résidu est positif (resp. négatif). Les poids sont les valeurs absolues des résidus. Enfin deux sommets sont connectés si et seulement si le bord de leurs domaines polaires respectifs contiennent un même lien selle.

Le fait que $\omega$ soit de genre zéro et ne possède qu'un zéro implique clairement que ce graphe biparti est un arbre. Le théorème des résidus implique directement le point i) qui dit que la différence des poids est nulle. Regardons maintenant l'effet de l'opération qui enlève une feuille au graphe de connexion (dans le cas où il possède au moins deux arêtes). Cette opération revient à couper la différentielle $\omega$ le long d'un lien selle $v$ dont l'un des côté est un unique domaine polaire. De plus, le domaine polaire adjacent à ce lien selle est remplacé par le domaine polaire associé aux vecteurs précédents privés de~$v$ (qui est un ensemble non vide car $\omega$ est non singulière). Les autres domaines polaires et les identifications restent les mêmes. 
\`A chaque étape, cette opération produit une différentielle de genre zéro avec un unique zéro et des pôles simples dont tous les résidus sont non nuls. Cela implique clairement le point ii) des graphes de connexion.
\end{proof}

Nous montrons maintenant que les $s$-uplets qui ne sont pas dans l'image de  l'application résiduelle $\appres[0](s-2;(-1^{s}))$ sont commensurables entre eux.
\begin{lem}\label{lem:finietcommens}
Soient $r:=(x_{1},\dots,x_{s_{1}},-y_{1},\dots,-y_{s_{2}})$ avec $x_{i}$ et $y_{j}$ réels strictement positifs. Si le $s$-uplet $r$ n'appartiennent pas à l'image de $\appres[0](s-2;(-1^{s}))$, alors les $x_{i}$ et $y_{j}$ sont commensurables entre eux. 
\end{lem}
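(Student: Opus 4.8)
The plan is to argue by contraposition: assuming the positive reals $x_1,\dots,x_{s_1},y_1,\dots,y_{s_2}$ are \emph{not} all commensurable---equivalently, that they span a $\QQ$-vector subspace $V\subset\RR$ of dimension at least $2$---I would produce a differential realizing $r$, and since the $r_i$ are real (hence colinear) it suffices by Lemma~\ref{lem:gzeropolesimples}(2) to exhibit a connection graph associated to $r$. I first reformulate the existence of such a graph as a \emph{resolution process} on the balanced pair of multisets $P=\{x_i\}$, $N=\{y_j\}$ (recall $\sum x_i=\sum y_j$): a single reduction step selects a weight together with a strictly larger weight on the opposite side, records an edge between them, deletes the smaller one, and replaces the larger by their difference. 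Condition ii) of a connection graph is then exactly the demand that every intermediate weight remain strictly positive, the terminal single vertex being forced to weight $0$ by condition i). Read in reverse, a sequence of $s-1$ such steps builds precisely a bipartite tree, so a valid resolution is the same datum as a connection graph.

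The elementary engine of the induction is the following availability statement: \emph{as long as the current weights are not all equal, a valid step exists}. Indeed, let $m$ be the global minimum weight; if there were no strictly larger weight on the side opposite to some minimal weight, then---distinguishing whether the minimum is attained on one side or on both---every weight would have to equal $m$. In particular a valid step is always available once the current weights span a $\QQ$-subspace of dimension $\geq 2$, since that forbids all weights being equal. I would then induct on $s=s_1+s_2$, the base cases $s\le 3$ being immediate: every balanced configuration with a single weight on one side resolves through the obvious star.

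For the inductive step I perform one valid step and recurse, the only issue being to keep the surviving $s-1$ weights non-commensurable so that the induction hypothesis still applies. Since a step alters only two weights and the new weight $e-m$ lies in the previous $\QQ$-span, the dimension drops by at most one; hence if the current span has dimension $\geq 3$ any valid step keeps it $\geq 2$ and we recurse. \textbf{The main obstacle is the case of span exactly $2$}, where a careless step can collapse the surviving weights onto a single commensurability line. Here I would argue that not every valid step can collapse: if two valid steps using distinct vertex pairs both collapsed, their overlapping sets of untouched weights would force all but at most one weight onto a common line $\ell\subset V$, and then the relation $\sum P=\sum N$ (whose common value lies in $\ell$) would push the last weight onto $\ell$ as well, contradicting $\dim V=2$. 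The delicate point, which I expect to require a short direct case analysis, is to guarantee the existence of two valid steps with distinct vertex pairs, and to settle by hand the smallest configurations---notably $s=4$---where valid steps are scarce.
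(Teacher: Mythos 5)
Your proposal is correct in substance but takes a genuinely different route from the paper. The paper uses the same elementary move (pair a weight with a strictly larger one on the opposite side and replace the larger by the difference), but it runs the induction on \emph{non-realizable} tuples: non-realizability is preserved for free under the reduction, since a connection graph for the reduced tuple would lift to one for the original tuple by attaching a leaf; the induction hypothesis then allows one to normalize the reduced tuple to coprime integers, and the heart of the paper's argument is an explicit ``staircase'' connection graph, built from interleaving partial sums, showing that re-inserting a non-integral residue always yields a realizable tuple. Your contrapositive version instead keeps as invariant ``the weights span a $\QQ$-vector space of dimension $\geq 2$'', and all the work goes into showing that some valid step preserves this invariant. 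What the paper's arrangement buys is the arithmetic refinement it needs right afterwards (non-realizable tuples are, up to scale, integral --- this feeds directly into the proof of the bound~\eqref{eq:polessimples}); what yours buys is self-containedness, since no explicit staircase construction is needed.

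The two points you flag as delicate do close, and quickly, so I would not count them as gaps. First, two valid steps with distinct vertex pairs always exist once the weights are not all equal: if $(a,b)$ were the only valid pair, every other cross pair $(x,y)$ would satisfy $x=y$; when both sides have cardinality $\geq 2$, picking $x\neq a$ and $y\neq b$ gives $x=y$, $x=b$, $a=y$, hence $a=b$, contradicting validity of $(a,b)$; and when one side is a singleton $\{p\}$, the balance relation gives $p=\sum_j n_j>n_j$ for each $j$, so \emph{every} cross pair is a valid step. Second, the case $s=4$ needs no collapse argument at all: after any valid step only three weights remain, and every balanced $3$-tuple is realizable (the star, as in your base case), so for $s=4$ any valid step finishes the induction; your overlap argument is then only invoked for $s\geq 5$, where the two sets of untouched weights do share an element. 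One last minor point deserves a sentence in a final write-up: you assert that a valid resolution ``is the same datum'' as a connection graph, whereas condition ii) of the paper's definition quantifies over leaf-removal sequences and your resolution exhibits only one. The two are in fact equivalent --- in any removal sequence the weight transferred across an edge is the absolute value of the sum of the signed weights of either component of the tree minus that edge, so the conditions tested are the same edge-wise sign conditions for every sequence --- and in any case the gluing construction in the proof of Lemma~\ref{lem:gzeropolesimples} only consumes one valid order, so your application of that lemma is sound.
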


\begin{proof}
Nous procédons à une démonstration par récurrence.
Si on a $s_{1}=1$ ou $s_{2}=1$, toutes les configurations de résidus sont réalisables. Si $s_{1}=s_{2}=2$, les seuls uplets qui ne sont pas dans l'image de l'application résiduelle sont proportionnels à $(1,1,-1,-1)$.

\`A présent, on suppose que la proposition est démontrée pour tous les couples $(a,b)\neq (s_{1},s_{2})$ tels que $a \leq s_{1}$ et $b \leq s_{2}$. Nous considérons un uplet qui n'est pas dans l'image avec $s_{1}$ nombres positifs et $s_{2}$ négatifs. Si tous les $x_{i}$ et $y_{j}$ sont égaux, alors on a $s_{1}=s_{2}$ et le uplet est proportionnel à $(1,\dots,1,-1,\dots,-1)$.

On peut donc supposer qu'il existe deux résidus, disons $x_{s_{1}}$ et $-y_{s_{2}}$, tels que $y_{s_{2}}<x_{s_{1}}$. Le $(s-1)$-uplet obtenu en retirant le résidu $-y_{s_{2}}$ et en remplaçant $x_{s_{1}}$ par $x_{s_{1}}-y_{s_{2}}$ n'est pas réalisable. En effet, si ce nouveau uplet était réalisable, il existerait un graphe de connexion qui lui serait associé. Il suffirait d'ajouter à ce graphe une branche avec comme poids $y_{s_{2}}$ au sommet de poids  $x_{s_{1}}-y_{s_{2}}$ et de remplacer le poids $x_{s_{1}}-y_{s_{2}}$ par $x_{s_{1}}$. Ce graphe serait un graphe de connexion pour la configuration initiale, ce qui est absurde. 

Ainsi, quitte à changer les signes, tout $s$-uplet non réalisable avec $s_{1}$ résidus positifs et~$s_{2}$ négatifs s'obtient à partir d'un $s-1$-uplet non réalisable $(x_{1},\dots,x_{s_{1}},-y_{1},\dots,-y_{s_{2}-1})$ auquel on ajoute un résidu $y_{s_{2}}$ et on remplace un résidu $x_{i}$ par $x_{i}+y_{s_{2}}$. On peut supposer, quitte à changer l'ordre, que $i=s_{1}$ et on note $x'_{s_{1}}:=x_{s_{1}}+y_{s_{2}}$. 
On cherche donc pour quelles valeurs de $y_{s_{2}}$ le $s$-uplet $(x_{1},\dots,x'_{s_{1}},-y_{1},\dots,-y_{s_{2}})$ est non réalisable. Par hypothèse de récurrence, on normalise ces nombres de telle sorte que les $x_{i}$ et les $y_{j}$ avec $j\neq s_{2}$ sont des entiers premiers entre eux. 
Si $y_{s_{2}}$ n'était pas un entier, alors ces résidus seraient dans l'image. En effet, un graphe de connexion serait obtenu de la façon suivante. On permute~$y_{1}$ et~$y_{s_{2}}$. On prend $s_{1}$ sommets en haut et $s_{2}$ sommets en bas. Le sommet $i_{0}$ en haut est relié au sommet $j_{0}$ en bas si et seulement si pour $J=j_{0}$ ou $J=j_{0}+1$, on a
 $$ \sum_{i\leq i_{0}-1}x_{i}\leq \sum_{j\leq J} y_{j} \leq  \sum_{i\leq i_{0}}x_{i}\, .$$
Les poids des sommets sont évidemment donnés par les $y_{j}$, les $x_{i}$ pour $i<s_{1}$ et $x'_{s_{1}}$ pour $i=s_{1}$.
Par conséquent, pour obtenir une configuration non réalisable, il est nécessaire que~$y_{s_{2}}$ soit un entier. En effet, dans le cas contraire, les sommes partielles ne peuvent pas coïncider car l'une des familles de sommes partielles n'est pas constituée d'entiers et donc l'opération consistant à retirer une feuille au graphe peut toujours s'effectuer. 
\end{proof}

Nous pouvons donc maintenant procéder à la preuve du point ii) du théorème~\ref{thm:geq0keq1} dans le cas des strates $\omoduli[0](s-2;(-1^{s}))$. Concrètement, nous montrons que l'image de l'application résiduelle de $\omoduli[0](s-2;(-1^{s}))$ avec $s\geq2$ est le complémentaire de l'union des plans $\CC^{\ast} \cdot (x_{1},\dots,x_{s_{1}},-y_{1},\dots,-y_{s_{2}})$ où $x_{i},y_{j} \in \NN$ sont premiers entre eux et  
\begin{equation*}
 \sum_{i=1}^{s_{1}} x_{i} = \sum_{j=1}^{s_{2}} y_{j} \leq s-2\,.
\end{equation*}
 \begin{proof}
 \'Etant donné un élément qui n’appartient pas à l'image de l'application résiduelle de $\omoduli[0](s-2;(-1^{s}))$.  Par le lemme~\ref{lem:finietcommens} on peut supposer que cet élément est de la forme $(x_{1},\dots,x_{s_{1}},-y_{1},\dots,-y_{s_{2}})$ avec  $x_{i}$ et $y_{j}$ des entiers premiers entre eux. Nous montrons  par récurrence que ce $s$-uplet vérifie l'équation~\eqref{eq:polessimples}.  
  Rappelons que cette équation est l'inégalité $\sum_{i=1}^{s_{1}} x_{i} = \sum_{j=1}^{s_{2}} y_{j} \leq s-2$.
 
 Nous commençons par montrer qu'il existe au moins quatre éléments du $s$-uplet qui sont égaux à $\pm1$. Cela est vrai dans le cas des strates $\omoduli[0](s-2;(-1^{s}))$ avec $s\leq 4$ puisque qu'on vérifie sans difficultés grâce au lemme~\ref{lem:gzeropolesimples} que le seul élément qui n'est pas dans l'image est $(1,1,-1,-1)$. Par récurrence, on considère un $s$-uplet $r := (x_{1},\dots,x_{s_{1}},-y_{1},\dots,-y_{s_{2}})$ qui n'est pas dans l'image de l'application résiduelle. Si tous les $x_{i},y_{j}$ sont égaux entre eux, le résultat est clair. Sinon on peut supposer que $\min(x_{i}) = x_{s_{1}} < \max(y_{j}) = y_{s_{2}}$. Le $s-1$-uplet $r' :=(x_{1},\dots,x_{s_{1}-1},-y_{1},\dots,-y_{s_{2}-1},x_{s_{1}}-y_{s_{2}})$ n'est pas réalisable. En effet, si c'était le cas il existerait un graphe de connexion associé à ces résidus On obtiendrait un graphe de connexion pour $r$ en collant une feuille de poids $x_{s_{1}}$ au sommet de poids $x_{s_{1}}-y_{s_{2}}$. Si le pgcd de $r'$ est $d > 1$, alors $r$ est de la forme $(dk_{1},\dots,dk_{s_{1}-1},x,-dl_{1},-dl_{s_{2}-1},-y)$ avec $x$ et $y$ premiers avec $d$. Mais alors il existe une différentielle qui possède ces résidus obtenue en collant sur un segment les cylindres de circonférence $(x,dk_{1},\dots,dk_{s_{1}-1})$ (dans cet ordre) en haut et $(-dl_{1},-dl_{s_{2}-1},-y)$ en bas. Par conséquent $d=1$. 
 
 En supposant (sans perte de généralité) que $x_{s_{1}}=\min(x_{i},y_{i})$, on obtient, par hypothèse de récurrence, que trois nombres parmi $x_{1},\dots,x_{s_{1}-1},y_{1},\dots,y_{s_{2}-1}$ sont égaux à $1$. Ajoutant $x_{s_{1}}$ (qui vaut $1$ par hypothèse) à cette liste, on obtient que $r$ possède au moins quatre éléments égaux à $\pm 1$.

 L'équation~\eqref{eq:polessimples} est clairement satisfaite pour l'unique élément $(1,1,-1,-1)$ qui n'est pas dans l'application résiduelle de $\omoduli[0](2;(-1^{4}))$. Supposons maintenant par récurrence qu'elle est satisfaite pour tous les $t$-uplet qui ne sont pas dans l'application résiduelle des strates $\omoduli[0](t-2;(-1^{t})$ avec $4 \leq t<s$. Considérons  l'élément $(x_{1},\dots,x_{s_{1}},-y_{1},\dots,-y_{s_{2}})$ qui n'est pas dans l'image de l'application résiduelle de $\omoduli[0](s-2;(-1^{s}))$.
 Si les résidus sont tous égaux entre eux, la borne est clairement satisfaite. Sinon, comme il existe un élément  du $s$-uplet égal à $1$, on peut supposer que $x_{s_{1}}>1$ et $y_{s_{2}}=1$. En enlevant le résidu $-y_{s_{2}}$ du $s$-uplet et en l'ajoutant à~$x_{s_{1}}$, on obtient un $(s-1)$-uplet qui n'est pas dans l'application résiduelle. Par récurrence, il vérifie l'équation~\eqref{eq:polessimples} avec $s-1$ termes. Comme $y_{s_{2}} = 1$, on a
$$\sum_{j=1}^{s_{2}} y_{j} = 1 + \sum_{j=1}^{s_{2}-1} y_{j} \leq  1 + (s-1)-2 = s-2\,.$$
Ceci démontre par récurrence la borne de l'équation~\eqref{eq:polessimples}.
 \smallskip
 \par
  Réciproquement, soit $(x_{1},\dots,x_{s_{1}},-y_{1},\dots,-y_{s_{2}})$  un élément avec  $x_{i}$ et $y_{j}$  des entiers premiers entre eux. S'il existe une différentielle $\omega$ de $\omoduli[0](s-2;(-1^{s-2}))$ qui possède ces résidus, alors  les longueurs de liens-selles de $\omega$ sont entières.    Comme il y a $s-1$ liens-selles, la somme des $x_{i}$ est supérieure ou égal à $s-1$ et donc l'équation~\eqref{eq:polessimples} n'est pas satisfaite. 
 \end{proof}

En utilisant cette caractérisation nous présentons dans la table~\ref{tab:noim} les éléments de l'espace résiduel qui ne sont pas dans l'image de l'application résiduelle des strates $\omoduli[0](s-2;(-1^{s}))$ pour~$s\leq6$.
\par
 \begin{table}[h]
\begin{tabular}{|c|c|}
  \hline
Strate & Éléments  pas dans l'image de l'application résiduelle (modulo $\CC^{\ast}$))  
\\
  \hline
  $\omoduli[0](0;(-1^{2}))$ & $\emptyset$  \\
  \hline
  $\omoduli[0](1;(-1^{3}))$ & $\emptyset$  \\
  \hline
 $\omoduli[0](2;(-1^{4}))$ & $(1,1,-1,-1)$  \\
  \hline
   $\omoduli[0](3;(-1^{5}))$ & $(2,1,-1,-1,-1)$  \\
  \hline
$\omoduli[0](4;(-1^{6}))$& $(1,1,1,-1,-1,-1), (2,1,1,-2,-1,-1), (2,2,-1,-1,-1,-1), (3,1,-1,-1,-1,-1)$  \\
  \hline
\end{tabular}
\caption{Les éléments qui ne sont pas des résidus de différentielles des strates $\omoduli[0](s-2;(-1^{s})$ avec $s\leq6$.}
\label{tab:noim}
\end{table}
 \par

Nous passons maintenant au cas des strates possédant au moins deux zéros. Elle s'obtient en utilisant le résultat pour les strates avec un unique zéro. Afin de décrire l'image de l'application résiduelle, nous faisons appel à la notion de différentielle stable rappelée dans la section~\ref{sec:pluridiffentre}.

\begin{lem}\label{lm:g0p-1plusieurszero}
 Soit $\omoduli[0](a_{1},\dots,a_{n};(-1^{s}))$ une strate de genre zéro avec $s$ pôles simples et~$n\geq2$ zéros. Le $s$-uplet $(r_{1},\ldots,r_{s})$ est dans l'image de l'application résiduelle si et seulement s'il existe une différentielle stable de genre zéro $(X,\omega)$ telle que:
 \begin{enumerate}
  \item la restriction de $\omega$ à chaque composante irreductible de $X$ possède un pôle simple aux points nodaux;
  \item la restriction de $\omega$ à la partie lisse de la courbe stable $X$ possède des singularités d'ordres  $(a_{1},\dots,a_{n};(-1^{s}))$ et les résidus aux pôles sont $(r_{1},\dots,r_{s})$;
  \item la restriction de $\omega$  à chaque composante irréductible de $X$ possède un unique zéro. 
 \end{enumerate}
\end{lem}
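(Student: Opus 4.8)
Le plan est de traiter séparément les deux implications, la réciproque reposant sur un lissage et le sens direct sur une dégénérescence de la surface plate.

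Pour la réciproque, je partirais d'une différentielle stable $(X,\omega)$ de genre zéro vérifiant les conditions (1)–(3). Comme le genre arithmétique de $X$ est nul, son graphe dual est un arbre dont toutes les composantes sont rationnelles. Les conditions (1) et (3) montrent alors que $\omega$ est une différentielle entrelacée de type $\mu=(a_{1},\dots,a_{n};(-1^{s}))$ dont l'ordre en chaque point nodal vaut $-1$, et comme $\omega$ est une différentielle stable les résidus y sont opposés. On peut donc appliquer le lemme~\ref{lem:lisspolessimples}, qui fournit un lissage dans la strate $\omoduli[0](\mu)$ — qui est bien de genre zéro puisque $\mu$ est une partition de $-2$ — sans modifier les résidus aux pôles non nodaux. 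Le résidu de la différentielle lisse ainsi obtenue est $(r_{1},\dots,r_{s})$, ce qui prouve que ce $s$-uplet appartient à l'image de $\appres[0](\mu)$.

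Pour le sens direct, je partirais d'une différentielle lisse $\omega\in\omoduli[0](\mu)$ de résidus $(r_{1},\dots,r_{s})$ et je dégénérerais la surface plate associée afin de séparer ses zéros, par récurrence sur le nombre $n$ de zéros. Le cas $n=1$ est immédiat (la différentielle lisse est elle-même la différentielle stable cherchée). Pour $n\geq 2$, il s'agit d'isoler un zéro, disons $z_{1}$, en épinçant une courbe fermée simple $\gamma$ séparant $z_{1}$ des autres zéros. Le décompte des ordres sur la sphère obtenue du côté de $z_{1}$ impose d'encercler exactement $a_{1}+1$ pôles simples, de sorte que cette composante soit un élément d'une strate $\omoduli[0](a_{1};(-1^{a_{1}+2}))$ ; d'autre part, pour que le nœud soit formé de deux pôles simples il faut que la période de $\gamma$, égale à la somme des résidus encerclés, soit non nulle. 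L'épinçage produit alors une différentielle stable à deux composantes : l'une ne contient que le zéro $z_{1}$ et des pôles simples, l'autre est une différentielle de genre zéro à $n-1$ zéros munie d'un pôle simple supplémentaire au nœud, à laquelle on applique l'hypothèse de récurrence. En recollant au nœud l'arbre obtenu sur la seconde composante, on obtient la différentielle stable recherchée.

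Le point délicat sera de garantir l'existence d'une telle courbe $\gamma$, c'est-à-dire d'une dégénérescence isolant un seul zéro avec un nœud à pôles simples : il faut réaliser simultanément le bon nombre de pôles encerclés, une holonomie non nulle (possible car les résidus des pôles simples sont non nuls), et vérifier que l'épinçage correspond bien à une vraie dégénérescence plate. C'est ici qu'interviendront de façon essentielle la géométrie plate des pôles simples (les demi-cylindres infinis laissant la latitude nécessaire pour router $\gamma$) et les techniques de lissage de~\cite{BCGGM} déjà employées dans les lemmes~\ref{lem:lissdeuxcomp} et~\ref{lem:lisspolessimples}, appliquées cette fois dans le sens inverse.
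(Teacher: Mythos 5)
Your converse direction is correct and is exactly the paper's argument: a genus-zero stable differential satisfying (1)--(3) is an entrelaced differential with orders $-1$ and opposite residues at every node, so Lemma~\ref{lem:lisspolessimples} smooths it inside $\omoduli[0](\mu)$ without touching the residues at the non-nodal poles.

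The direct implication, however, has a genuine gap, and it sits precisely at the step you flag as ``le point délicat'' and then defer. The whole content of this implication is the existence of the degeneration, and nothing in your sketch produces it. Pinching a simple closed curve $\gamma$ is not an operation freely available on a translation surface: a curve can be pinched inside the stratum, keeping the residues fixed, essentially only when it is freely homotopic to the core curve of a flat cylinder (the degeneration is then the stretching of that cylinder). An arbitrary topological curve separating $z_{1}$ and $a_{1}+1$ chosen poles from the rest, even with non-zero period, need not be of this type, and the smoothing results of \cite{BCGGM} quoted in Lemmas~\ref{lem:lisspolessimples} and~\ref{lem:lissdeuxcomp} cannot be ``applied in reverse'': they characterize which stable differentials smooth into $\omoduli[0](\mu)$, they do not assert that a given smooth differential degenerates onto a prescribed boundary configuration. (Choosing the subset of $a_{1}+1$ poles with non-zero residue sum is the easy part: if all such sums vanished, all residues would be equal, hence zero, contradicting that simple poles have non-zero residues.) The paper fills exactly this hole, and does so without induction: using period coordinates on the fixed-residue locus (an orbifold of dimension $n-1$, see \cite{BCGGM3}), it perturbs $\omega$ so that no horizontal saddle connection joins two distinct zeros; then, the residues being real in the relevant (collinear) case, every horizontal separatrix returns to its starting singularity, so cutting along the horizontal saddle connections decomposes the surface into finite cylinders and half-infinite cylinders. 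Sending the heights of all finite cylinders to infinity simultaneously produces the stable differential in one step, each component carrying exactly one zero because every finite cylinder separates two zeros. Those cylinders are precisely the ``pinchable curves'' your induction needs; constructing them is the heart of the proof, not a verification one can postpone.
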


\begin{proof}
Soit  $r:=(r_{1},\dots,r_{s})\in\espres[0](\mu)$ un élément de l'espace résiduel de la strate $\omoduli[0](a_{1},\dots,a_{n};(-1^{s}))$. 
Supposons que $r$ soit dans l'image de l'application résiduelle de cette strate. Montrons l'existence d'une différentielle stable vérifiant les conditions du lemme~\ref{lm:g0p-1plusieurszero}. Soit $\omega$ une différentielle de $\omoduli[0](\mu)$ ayant pour résidus~$r$. Quitte à perturber $\omega$ sans changer les résidus, on peut supposer qu'il n'existe pas de liens selles horizontaux entre deux singularités coniques distinctes. En effet, le lieu de la strate $\omoduli[0](\mu)$ où les résidus sont $r$ est une variété orbifold de dimension~$n-1$ munie d'un atlas où les coordonnées sont les périodes des cycles de l'homologie relative (voir \cite{BCGGM3}). Comme tous les résidus sont réels, toute demi-droite horizontale issue d'une singularité conique heurte cette même singularité en temps fini. Coupons la surface plate associée à $\omega$ le long de ces liens selles. On obtient une union disjointe de cylindres et de demi-cylindres infinis.

La hauteur des cylindres d'aire finie peut être choisie arbitrairement sans changer la strate et les résidus. En faisant tendre toutes les hauteurs de ces cylindres vers l'infini, on obtient une différentielle stable. De plus, comme chaque zéro est relié à un autre par un cylindre, il y a précisément un zéro sur chaque composante irréductible de cette différentielle.
\smallskip
\par
L'autre implication est claire une conséquence directe du lemme~\ref{lem:lisspolessimples}. En effet, on obtient une différentielle dans la strate $\omoduli[0](\mu)$ ayant les résidus~$r$ par le lissage de la différentielle stable satisfaisant les conditions du lemme~\ref{lm:g0p-1plusieurszero}.
\end{proof}

Nous sommes maintenant en mesure de prouver le point ii) du théorème~\ref{thm:geq0keq1} pour toutes les strates avec $n\geq 2$ zéros.

Il s'agit de montrer que l'image de l'application résiduelle de $\omoduli[0](a_{1},\dots,a_{n};(-1^{s}))$ avec $s\geq2$ est le complémentaire de l'union des plans $\CC^{\ast} \cdot (x_{1},\dots,x_{s_{1}},-y_{1},\dots,-y_{s_{2}})$ où $x_{i},y_{j} \in \NN$ sont premiers entre eux et  
\begin{equation*}
 \sum_{i=1}^{s_{1}} x_{i} = \sum_{j=1}^{s_{2}} y_{j} \leq \max(a_{1},\dots,a_{n})\,.
\end{equation*}

\begin{proof}
 Considérons tout d'abord un $s$-uplet $(x_{1},\dots,x_{s_{1}},-y_{1},\dots,-y_{s_{2}})$ avec  $x_{i}$ et $y_{j}$ des entiers premiers entre eux tels que $T:=\sum_{i=1}^{s_{1}} x_{i} = \sum_{j=1}^{s_{2}} y_{j} > \max(a_{1},\dots,a_{n})$.   Si $n=1$, on a déjà montré qu'il existe une différentielle avec ces invariants locaux. Cela implique que si la somme $T$ est strictement supérieure à $s-2$, alors il suffit d'éclater le zéro d'une différentielle de $\omoduli[0](s-2;(-1^{s}))$ qui possède ces résidus.  On considère à partir de maintenant les $s$-uplets tels que $\max(a_{i}) < T \leq  s-2$. 
 
 Si $s_{1}=s_{2}=s/2-1$, alors ce uplet est réalisable dans la strate $\omoduli[0](s/2-1, s/2-1;(-1^{s}))$. En effet il suffit  de lisser la différentielle stable formée de la façon suivante. Elle possède deux composantes irréductibles $X_{1}$ et~$X_{2}$ avec un nœud entre les deux. Chaque composante contient un zéro, la composante $X_{1}$ contient tous les résidus positifs et $X_{2}$ les résidus négatifs. Le résidu aux points nodaux est l'opposé des sommes des résidus.

On considère maintenant les strates distinctes de $\omoduli[0](s/2-1, s/2-1;(-1^{s}))$. Nous supposons par récurrence que le résultat est vrai pour $n-1$. Comme nous avons l'inégalité $\min(a_{i})+\max(a_{i}) \leq s-2$, il s'ensuit que $\min(1+a_{i})+\max(1+a_{i}) \leq \min(s_{1},s_{2})+\max(s_{1},s_{2})$. On en déduit l'inégalité $1 + \min(a_{i}) \leq \max (s_{1},s_{2})$. Cette inégalité est stricte sauf si $a_{1}=a_{2}=s/2-1$, que nous ne considérons pas ici. Sans perte de généralité, on suppose que $a_{1}$ est le zéro d’ordre minimal et que $s_{1}\geq s_{2}$. C'est donc l'inégalité $1 + a_1 < s_{1}$ qui est satisfaite.

Comme $T \leq s-2$, la configuration n'est pas réalisable dans la strate $\omoduli[0](s-2;(-1^{s}))$. Il est démontré dans la preuve du théorème~\ref{thm:geq0keq1} pour le cas $n=1$ qu'une telle configuration compte au moins quatre résidus de valeur absolue égale à~$1$.

Nous allons considérer la différentielle stable sur la courbe stable formée de deux composantes $X_{1}$ et~$X_{2}$ reliées par un unique nœud construite de la façon suivante. Choisissons un ensemble~$\mathcal{P}$ de $1 + a_1$ éléments~$x_{i}$ tel que, si tous les éléments de norme $1$ sont positifs, alors le complémentaire de $\mathcal{P}$ contient au moins un résidu de valeur absolue $1$. Sur  $X_1$, on met le zéro d’ordre $a_1$ et les poles simples dont les résidus sont dans~$\mathcal{P}$. Le résidu au point nodal est l'opposé de la somme de ces résidus. Une telle différentielle existe car elle possède un unique pôle simple dont
le résidu est  négatif. Sur la composante $X_2$, on met tous les autres zéros et pôles. Par construction, il existe au moins un pôle dont le résidu est de valeur absolue $1$. Donc le pgcd des résidus des pôles de cette composante est égal à $1$. De plus, la somme des résidus négatifs sur $X_{2}$ est égale à $T$, qui est par hypothèse strictement
supérieure à l'ordre du zéro maximal de $X_{2}$. Comme cet ordre coïncide avec le maximum sur tous les ordres, l’hypothèse de récurrence donne une différentielle sur $X_{2}$ possédant ces invariants. On obtient la différentielle souhaitée en lissant cette différentielle stable.
%
%
%
 \smallskip
 \par
Réciproquement, on considère un $s$-uplet avec $\sum_{i=1}^{s_{1}} x_{i} = \sum_{j=1}^{s_{2}} y_{j} \leq \max(a_{1},\dots,a_{n})$. Nous supposons que ce $s$-uplet est réalisé par une différentielle telle que $n$ est minimal. Notons que nous avons montré que $n\geq2$ dans la première partie de cette section. Sans perte de généralité, on supposera que $a_{n}= \max(a_{1},\dots,a_{n})$.

On considère une différentielle stable satisfaisant les conditions du lemme~\ref{lm:g0p-1plusieurszero}. Considérons une composante $X_{1}$ qui est reliée au reste de la courbe par un unique nœud, i.e. cette composante correspond à une feuille du graphe dual. Comme ce graphe contient au moins deux feuilles, nous supposerons que $X_{1}$ ne contient pas $a_{n}$.  

Supposons que le résidu de la restriction de la différentielle à $X_{1}$ possède un résidu positif au point nodal. Par stabilité, la restriction de la différentielle au complémentaire~$\tilde X$ de $X_{1}$ possède un pôle avec un résidu négatif au point nodal correspondant. Cette restriction satisfait aux conditions du lemme~\ref{lm:g0p-1plusieurszero} (en considérant le point nodal avec $X_{1}$ comme un pôle de la partie lisse). Supposons que les $\tilde{s_{1}}$ premiers résidus positifs appartiennent à $\tilde X$, on a alors $\sum_{i=1}^{\tilde{s_{1}}} x_{i} \leq \sum_{i=1}^{s_{1}} x_{i} \leq a_{n}$. En posant $d$ le pgcd des résidus de la partie lisse de $\tilde{X}$, cette inégalité implique que  $\sum_{i=1}^{\tilde{s_{1}}} \tfrac{x_{i}}{d} \leq \sum_{i=1}^{s_{1}} \tfrac{x_{i}}{d} \leq a_{n}$. Comme il y a $n-1$ zéros sur $\tilde X$, cela contredit la minimalité de notre exemple.
 \end{proof}

Nous terminons cette section par un exemple qui illustre les construction du lemme~\ref{lm:g0p-1plusieurszero}.
\begin{ex}\label{ex:graphedeliant}
Dans cet exemple, nous donnons des différentielles dans  $\omoduli[0](1,3;(-1^{6}))$ et $\omoduli[0](2,2;(-1^{6}))$ dont les r\'esidus sont $(2,1,1,-1,-1,-2)$ dans la figure~\ref{fig:graphdeliant}.  Les graphes duaux des différentielles stables que l'on obtient en faisant tendre les hauteurs des cylindres fini vers l'infini. Ces différentielles stables sont celles données dans le lemme~\ref{lm:g0p-1plusieurszero}. Les pôles simples correspondent aux demi-arêtes  et les labels aux résidus de ces pôles. On peut remarquer que ces r\'esidus ne sont pas dans l'image de $\appres[0](4;(-1^{6}))$.

\begin{figure}[htb]
\center
\begin{tikzpicture}[scale=1.2]
    
    \begin{scope}[xshift=-4cm]
\fill[fill=black!10] (-4,-1.9)  -- (-2,-1.9)-- (-2,.9)-- (-4,.9) -- cycle;

   \foreach \i in {1,2,...,5}
  \coordinate (a\i) at (-1.5-\i/2,-1); 
       \foreach \i in {1,2,3,5}
   \fill (a\i)  circle (2pt);

     \foreach \i in {1,2,3,5}
     \draw (a\i) -- ++(0,-1);
     
   \foreach \i in {1,2,...,5}
  \coordinate (b\i) at (-1.5-\i/2,0); 
      
        \foreach \i in {1,2,3,5}
     \draw (b\i) -- ++(0,1);
     
     \draw (a1) -- (b1);
     \draw (a5) -- (b5);

      \foreach \i in {1,2,3,5}
   \fill[white] (b\i)  circle (2pt);
          \foreach \i in {1,2,3,5}
     \draw (b\i)  circle (2pt);
     
     \node[yshift=-.6cm, xshift=.1cm] at (a5) {$1$};
     \node[yshift=-.6cm, xshift=-.1cm] at (a3) {$1$};
      \node[yshift=-.6cm, xshift=.1cm] at (a3) {$2$};
     \node[yshift=-.6cm, xshift=-.1cm] at (a2) {$2$};
      \node[yshift=-.6cm, xshift=.1cm] at (a2) {$3$};
     \node[yshift=-.6cm, xshift=-.1cm] at (a1) {$3$};

     \node[yshift=.6cm, xshift=.1cm] at (b5) {$4$};
     \node[yshift=.6cm, xshift=-.1cm] at (b3) {$4$};
      \node[yshift=.6cm, xshift=.1cm] at (b3) {$5$};
     \node[yshift=.6cm, xshift=-.1cm] at (b2) {$5$};
      \node[yshift=.6cm, xshift=.1cm] at (b2) {$6$};
     \node[yshift=.6cm, xshift=-.1cm] at (b1) {$6$};
      
   \node[yshift=.5cm, xshift=.1cm] at (a5) {$0$};    
   \node[yshift=.5cm, xshift=-.1cm] at (a1) {$0$};    

\coordinate  (x1)  at  (0,0);
\fill (0,-1) coordinate (x2) circle (2pt); 
\draw (x1) -- ++(.5,0) coordinate (r1);
\draw (x1) -- ++(0,.5) coordinate (r2);
\draw (x1) -- ++(-.5,0) coordinate (r3);
\node[right] at (r1) {$1$};
\node[above] at (r2) {$1$};
\node[left] at (r3) {$2$};

\draw[] (x1) -- (x2) coordinate[pos=.3](R1)coordinate[pos=.7](R2);
\node[left] at (R2) {$4$};\node[left] at (R1) {$-4$};

\draw (x2) -- ++(.5,0) coordinate (r4);
\draw (x2) -- ++(0,-.5) coordinate (r5);
\draw (x2) -- ++(-.5,0) coordinate (r6);
\node[right] at (r4) {$-1$};
\node[below] at (r5) {$-1$};
\node[left] at (r6) {$-2$};

 \fill[white] (x1)  circle (2pt);
     \draw (x1)  circle (2pt);
\end{scope}

   \begin{scope}[xshift=3cm]
\fill[fill=black!10] (-4,-1.9)  -- (-2,-1.9)-- (-2,-1)-- (-1.8,0)--(-2.8,0) -- (-3,-1) -- (-3,.9) -- (-4,.9) -- cycle;

   \foreach \i in {1,2,...,5}
  \coordinate (a\i) at (-1.5-\i/2,-1); 
       \foreach \i in {1,2,...,5}
   \fill (a\i)  circle (2pt);

     \foreach \i in {1,2,4,5}
     \draw (a\i) -- ++(0,-1);
     \draw (a1) -- ++(80:1);
     \draw (a3) -- ++(80:1);
     
   \foreach \i in {3,4,5}
  \coordinate (b\i) at (-1.5-\i/2,0); 

        \foreach \i in {3,4,5}
     \draw (b\i) -- ++(0,1);
     
     \draw (a3) -- (b3);
     \draw (a5) -- (b5);

          \foreach \i in {3,4,5}
   \fill[white] (b\i)  circle (2pt);
          \foreach \i in {3,4,5}
     \draw (b\i)  circle (2pt);  
     
        \node[yshift=-.6cm, xshift=.1cm] at (a5) {$1$};
     \node[yshift=-.6cm, xshift=-.1cm] at (a4) {$1$};
      \node[yshift=-.6cm, xshift=.1cm] at (a4) {$2$};
     \node[yshift=-.6cm, xshift=-.1cm] at (a2) {$2$};
      \node[yshift=-.6cm, xshift=.1cm] at (a2) {$3$};
     \node[yshift=-.6cm, xshift=-.1cm] at (a1) {$3$};

     \node[yshift=.6cm, xshift=.1cm] at (b5) {$4$};
     \node[yshift=.6cm, xshift=-.1cm] at (b4) {$4$};
      \node[yshift=.6cm, xshift=.1cm] at (b4) {$5$};
     \node[yshift=.6cm, xshift=-.1cm] at (b3) {$5$};
      \node[yshift=.6cm, xshift=.25cm] at (a3) {$6$};
     \node[yshift=.6cm, xshift=0cm] at (a1) {$6$};
      
   \node[yshift=.5cm, xshift=.1cm] at (a5) {$0$};    
   \node[yshift=.5cm, xshift=-.1cm] at (a3) {$0$};     
     
\coordinate  (x1)  at  (0,0);
\fill (0,-1) coordinate (x2) circle (2pt); 
\draw (x1) -- ++(45:.5) coordinate (r1);
\draw (x1) -- ++(135:.5) coordinate (r2);
\node[above] at (r1) {$1$};
\node[above] at (r2) {$1$};
 
\draw[] (x1) -- (x2)  coordinate[pos=.3](R1)coordinate[pos=.7](R2);
\node[left] at (R2) {$2$};\node[left] at (R1) {$-2$};

\draw (x2) -- ++(0:.5) coordinate (r3);
\draw (x2) -- ++(-60:.5) coordinate (r4);
\draw (x2) -- ++(-120:.5) coordinate (r5);
\draw (x2) -- ++(-180:.5) coordinate (r6);
\node[right] at (r3) {$2$};
\node[below] at (r4) {$-1$};
\node[below left] at (r5) {$-2$};
\node[left] at (r6) {$-1$};

 \fill[white] (x1)  circle (2pt);
     \draw (x1)  circle (2pt);
     
\end{scope}
\end{tikzpicture}
\caption{Différentielles dans $\omoduli[0](2,2;(-1^6))$ et $\omoduli[0](1,3;(-1^6))$ avec résidus $(2,1,1,-1,-1,-2)$ et leurs graphes duaux des différentielles stables associées.} \label{fig:graphdeliant}
\end{figure}

\end{ex}

\section{Les différentielles sur les surfaces de genre supérieur}
\label{sec:ggeq1}

Dans cette section nous prouvons le théorème~\ref{thm:ggeq1abel} qui énonce la surjectivité de l'application résiduelle de chaque composante connexe en genre $g\geq1$.
Nous allons tout d'abord traiter le cas des différentielles sur les surfaces de Riemann de genre $1$ dans la section~\ref{sec:g1}. Le cas des strates de genre supérieur ou égal à $2$ est donné dans la section~\ref{sec:ggeq2}.

\subsection{Différentielles en genre $1$}
\label{sec:g1}

En genre un, Boissy à montré dans la section~4.2 de \cite{Bo} que les composantes connexes des strates sont caractérisées par le nombre de rotation $\rot(S)$ des surfaces de translations associées. Pour une surface de translation $S$ définie par une différentielle méromorphe de $\omoduli[1](a_{1},\dots,a_{n};-b_{1},\dots,-b_{p})$ avec une base symplectique de lacets lisses de l'homologie $(\alpha,\beta)$ le {\em nombre de rotation} est $$\rot(S):=\pgcd(a_{1},\dots,a_{n};b_{1},\dots,b_{p},\ind(\alpha),\ind(\beta))\,,$$ où $\ind(\alpha)$ est l'indice de l’application de Gauss du lacet $\alpha$. On a le résultat suivant dû à Boissy.
\begin{itemize}
 \item[i)] Si $n=p=1$, la strate est $\omoduli[1](a;-a)$ avec $a\geq2$ et chaque composante connexe correspond à un nombre de rotation qui est un diviseur strict de $a$.
 \item[ii)] Sinon, il existe une composante connexe correspondant à chaque nombre de rotation qui est un diviseur de $\pgcd(a_{1},\dots,a_{n};b_{1},\dots,b_{p})$.
\end{itemize}

\medskip
\par
Nous commençons par montrer la surjectivité de l'application résiduelle $\appres[1](s;(-1^{s}))$ avec $s\geq2$. Par le résultat précédent, ces strates sont connexes. 
\begin{lem}\label{lem:gunspe1}
 L'application résiduelle de la strate $\omoduli[1](s;(-1^{s}))$ est surjective pour $s>1$.
\end{lem}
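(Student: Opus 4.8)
The plan is to reduce, as far as possible, to the genus zero results already proved, and to treat the residual cases by an explicit flat construction that exploits the extra topology of genus one.

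First I would record the shape of the stratum. Since $\sum_i a_i-\sum_j b_j = 2g-2 = 0$ and the only poles are the $s$ simple ones, the stratum $\omoduli[1](s;(-1^{s}))$ has a single zero, necessarily of order $s$. By Boissy's classification recalled above, with $n=1$ and $p=s\geq 2$ we are in case~ii), and the components correspond to the divisors of $\pgcd(s;1,\dots,1)=1$; hence the stratum is connected. Therefore surjectivity of $\appres[1](s;(-1^{s}))$ amounts to realizing, by some differential of the stratum, every $r=(r_1,\dots,r_s)$ with all $r_i\neq 0$ and $\sum_i r_i=0$. I would split the argument according to whether $r$ already lies in the image of the genus zero residual map $\appres[0](s-2;(-1^{s}))$.

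For the generic case, suppose $r$ lies in the image of $\appres[0](s-2;(-1^{s}))$. I would choose a genus zero differential of $\omoduli[0](s-2;(-1^{s}))$ with residues $r$ and attach a handle at its zero of order $s-2$ using Proposition~\ref{prop:attachanse}. This surgery leaves the residues at the poles unchanged and produces a differential of $\omoduli[1]((s-2)+2;(-1^{s}))=\omoduli[1](s;(-1^{s}))$ with residues $r$. (For $s=2$ the relevant point has order $0$, but the attachment still applies through Lemma~\ref{lem:lissdeuxcomp}, the inserted elliptic differential of type $(2;-2)$ having vanishing residue at its double pole.) This disposes of every configuration in the genus zero image; by Theorem~\ref{thm:geq0keq1}\,ii) together with Lemma~\ref{lem:gzerogen1} this is every $r$ except the $\C^{\ast}$--multiples of coprime integer vectors $(x_1,\dots,x_{s_1},-y_1,\dots,-y_{s_2})$, $x_i,y_j\in\NN$, with $\sum x_i=\sum y_j\leq s-2$.

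It remains to realize exactly these excluded colinear configurations. Up to the $\C^{\ast}$--action I may assume $r=(x_1,\dots,x_{s_1},-y_1,\dots,-y_{s_2})$ is real with $x_i,y_j\in\NN$ coprime and $T:=\sum x_i=\sum y_j\leq s-2$. Here I would build the surface directly, as in the proof of Lemma~\ref{lem:gzeropolesimples}: take one polar part of order one per pole, with boundary vectors summing to the corresponding residue, and glue their horizontal boundary segments. The decisive difference with genus zero is that I would perform the gluings along a ribbon graph carrying exactly one cycle rather than a tree. The unique cycle is the soul of a handle, and its period is a free parameter which is \emph{not} constrained by the residue theorem; this is precisely what removes the genus zero obstruction — there all saddle connection lengths were forced to be positive integers, whence $T\geq s-1$ — and lets $T$ be as small as we please. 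One then checks, exactly as in Lemmas~\ref{lem:gzerogen1} and~\ref{lem:gzeropolesimples}, that the resulting translation surface has a single conical singularity of angle $2\pi(s+1)$ (one zero of order $s$), the prescribed simple poles and residues, and first Betti number two, i.e. genus one.

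The main obstacle is this last construction: one must exhibit, for every admissible small value of $T$, an explicit unicyclic gluing of the $s$ polar parts that simultaneously yields genus exactly one and a single cone point, so that the saddle connections number $2g+n+p-2=s+1$ and close up consistently. I would organize it either by prescribing the boundary decomposition of the polar parts so as to reach this count, or by an induction mirroring the genus zero induction of Lemma~\ref{lem:finietcommens} (removing a unit residue and merging it into another), anchored at the base case $(1,1,-1,-1)\in\omoduli[1](4;(-1^{4}))$ built by hand. Verifying connectivity, the genus, and that the cone points coalesce into one — for which the point is that cutting along a saddle connection either keeps the surface connected or passes through the distinguished cycle — is where the care lies, the remainder being the bookkeeping already met in the genus zero constructions.
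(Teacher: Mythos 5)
Your first case --- pulling back any configuration already in the image of $\appres[0](s-2;(-1^{s}))$ by sewing a handle at the zero via Proposition~\ref{prop:attachanse} --- is correct, but it only covers the easy part of the statement. The whole point of Lemma~\ref{lem:gunspe1} is to realize the configurations that genus zero \emph{cannot} realize, namely the $\CC^{\ast}$-multiples of coprime integer vectors $(x_{1},\dots,x_{s_{1}},-y_{1},\dots,-y_{s_{2}})$ with $\sum x_{i}=\sum y_{j}\leq s-2$. For those, your text offers a strategy (glue the $s$ polar parts of order $1$ along a ribbon graph carrying exactly one cycle, the cycle providing a free period that removes the integrality obstruction) but no construction: you write that you ``would organize it either by prescribing the boundary decomposition \dots or by an induction'', and that verifying the genus, the connectivity and the coalescence of the cone points ``is where the care lies''. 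That verification, together with an explicit gluing valid for every admissible $(x_{i};y_{j})$ and every $s$, is precisely the mathematical content of the lemma; as written, it is missing, so the proof is incomplete exactly on the configurations the lemma exists to handle.

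For comparison, the paper avoids your case split entirely with one uniform construction: take a flat torus $S_{1}$ whose shortest saddle connection is longer than $\sum_{i}|r_{i}|$, remove the residual polygon $\mathfrak{P}(-r)$ --- which is allowed to be degenerate, i.e.\ a slit, and this is exactly what happens when the $r_{i}$ are colinear --- and glue one polar part of order $1$ with residue $r_{i}$ along each edge by translation. The torus supplies the free parameters that your ``unique cycle'' was meant to provide, and a single Euler-characteristic and cone-angle check then works for every $r$ simultaneously; note that Figure~\ref{fig:casgeneralgenreun} illustrates precisely the colinear case $(1,1,-1,-1)$ that your second case would have had to treat. If you want to salvage your plan, the torus-with-slit picture is the canonical realization of your unicyclic gluing: the complement of the slit in the torus plays the role of the cycle, and all the bookkeeping you postponed reduces to the single hypothesis on the systole of $S_{1}$.
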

\begin{proof}
Considérons la strate $\omoduli[1](s;(-1^{s}))$, avec $s>1$  et $r:=(r_{1},\dots,r_{s})$ un élément de l'espace résiduel $\espres[1](s;(-1^{s}))$. Prenons un tore plat $S_{1}$ tel que le lien selle le plus petit est strictement supérieur à $\sum|r_{i}|$. Nous enlevons de $S_{1}$ le polygone résiduel $\mathfrak{P}(-r)$. Cette opération est réalisable par notre hypothèse sur la longueur des liens selles de $S_{1}$.
Pour chacun des pôles~$P_{i}$, nous prenons une partie polaire d'ordre~$1$ associée à~$r_{i}$. Nous collons le bord de ces parties polaires au bord de $S_{1}\setminus \mathfrak{P}(r)$ par translation. La construction est représentée par le dessin de la figure~\ref{fig:casgeneralgenreun}. On vérifie sans problème que la surface ainsi obtenue est de genre $1$ et possède une unique singularité conique.
 \begin{figure}[htb]
 \center
\begin{tikzpicture}[scale=1.6]

\begin{scope}[xshift=-7.5cm,yshift=-.8cm]
\coordinate (a) at (-1,1);
\coordinate (b) at (0,1);

    \fill[fill=black!10] (a)  -- (b)coordinate[pos=.5](f) -- ++(0,1) --++(-1,0) -- cycle;
    \fill (a)  circle (2pt);
\fill[] (b) circle (2pt);
 \draw  (a) -- (b);
 \draw (a) -- ++(0,.9) coordinate (d)coordinate[pos=.5](h);
 \draw (b) -- ++(0,.9) coordinate (e)coordinate[pos=.5](i);
 \draw[dotted] (d) -- ++(0,.2);
 \draw[dotted] (e) -- ++(0,.2);
\node[above] at (f) {$1$};
\end{scope}

\begin{scope}[xshift=-2.5cm,yshift=-.8cm]
\coordinate (a) at (-1,1);
\coordinate (b) at (0,1);

    \fill[fill=black!10] (a)  -- (b)coordinate[pos=.5](f) -- ++(0,1) --++(-1,0) -- cycle;
    \fill (a)  circle (2pt);
\fill[] (b) circle (2pt);
 \draw  (a) -- (b);
 \draw (a) -- ++(0,.9) coordinate (d)coordinate[pos=.5](h);
 \draw (b) -- ++(0,.9) coordinate (e)coordinate[pos=.5](i);
 \draw[dotted] (d) -- ++(0,.2);
 \draw[dotted] (e) -- ++(0,.2);
\node[above] at (f) {$2$};
\end{scope}

\begin{scope}[xshift=-7.5cm,yshift=-1.2cm]
\coordinate (a) at (-1,1);
\coordinate (b) at (0,1);

    \fill[fill=black!10] (a)  -- (b)coordinate[pos=.5](f) -- ++(0,-1) --++(-1,0) -- cycle;
    \fill (a)  circle (2pt);
\fill[] (b) circle (2pt);
 \draw  (a) -- (b);
 \draw (a) -- ++(0,-.9) coordinate (d)coordinate[pos=.5](h);
 \draw (b) -- ++(0,-.9) coordinate (e)coordinate[pos=.5](i);
 \draw[dotted] (d) -- ++(0,-.2);
 \draw[dotted] (e) -- ++(0,-.2);
\node[below] at (f) {$3$};
\end{scope}

\begin{scope}[xshift=-2.5cm,yshift=-1.2cm]
\coordinate (a) at (-1,1);
\coordinate (b) at (0,1);

    \fill[fill=black!10] (a)  -- (b)coordinate[pos=.5](f) -- ++(0,-1) --++(-1,0) -- cycle;
    \fill (a)  circle (2pt);
\fill[] (b) circle (2pt);
 \draw  (a) -- (b);
 \draw (a) -- ++(0,-.9) coordinate (d)coordinate[pos=.5](h);
 \draw (b) -- ++(0,-.9) coordinate (e)coordinate[pos=.5](i);
 \draw[dotted] (d) -- ++(0,-.2);
 \draw[dotted] (e) -- ++(0,-.2);
\node[below] at (f) {$4$};
\end{scope}

\begin{scope}[xshift=-7cm,yshift=-.5cm]
\coordinate (a) at (0,0);
\coordinate (b) at (3,0);
\coordinate (c) at (3,1);
\coordinate (d) at (0,1);

    \fill[fill=black!10] (a)  -- (b) -- (c) -- (d) -- cycle;
 \draw  (a) -- (b) --(c)--(d)--(a);

\draw (.5,.5) coordinate (e) --(1.5,.5) coordinate (f)coordinate[pos=.5](h)  --(2.5,.5) coordinate (g)coordinate[pos=.5](i);
\fill (e)  circle (2pt);
\fill (f) circle (2pt);
\fill (g) circle (2pt);
\node[below] at (h) {$1$};
\node[below] at (i) {$2$};
\node[above] at (h) {$3$};
\node[above] at (i) {$4$};
\end{scope}
\end{tikzpicture}
\caption{Une différentielle de $\omoduli[1](4;(-1^{4}))$ dont les résidus sont $(1,1,-1,-1)$.} \label{fig:casgeneralgenreun}
\end{figure}
\end{proof}

Nous montrons maintenant la surjectivité de l'application résiduelle restreinte à chaque composante connexe de genre supérieur ou égal à un. 
\begin{proof}[Preuve du théorème~\ref{thm:ggeq1abel} en genre $1$]
Nous fixons une strate $\omoduli[1](\mu)$ de genre $1$ avec  la partition $\mu:=(a_{1},\dots,a_{n};-b_{1},\dots,-b_{p};(-1^{s}))$ et un élément $r$ de l'espace résiduel $\espres[1](\mu)$.
Commençons par remarquer qu'il suffit de montrer le résultat pour les strates avec un unique zéro. En effet, la proposition~7.1 de \cite{Bo} implique que dans chaque composante connexe de strates avec $n\geq2$ zéros, il existe une différentielle obtenue en éclatant le zéro d'une différentielle avec un unique zéro. Le résultat se déduit car la proposition~\ref{prop:eclatZero} montre que cette opération ne modifie pas les résidus aux pôles.  Dans la suite nous supposerons que $n=1$.

Dans le cas où $s\neq0$, les strates sont connexes. Si $p = 0$, la surjectivité de l'application résiduelle a été montrée dans le lemme~\ref{lem:gunspe1}.
Dans le cas où~$p\neq0$, alors le théorème~\ref{thm:geq0keq1} montre qu'il existe une différentielle dans $\omoduli[0](a-2;-b_{1},\dots,-b_{p};(-1^{s}))$ dont les résidus sont $r$.  On obtient une différentielle dans $\omoduli[1](\mu)$ ayant pour résidus $r$ en cousant une anse à ces différentielles (voir la proposition~\ref{prop:attachanse}). 

\`A partir de maintenant, nous supposerons que $s=0$, i.e. que les strates paramètrent des différentielles dont les pôles sont d'ordres~$b_{i}\geq 2$. La proposition est trivialement vraie lorsqu'il y a un unique pôle. On supposera donc que $p\geq2$  dans la suite de la preuve. La proposition~6.1 de \cite{Bo} nous indique que chaque strate peut s'obtenir en cousant une anse à différentielles de genre~$0$. De plus, le théorème~\ref{thm:geq0keq1} indique qu'il existe des différentielles dans ces strates dont les résidus sont $(r_{1},\dots,r_{p})\in \espres[0](\mu)\setminus\left\{(0,\dots,0)\right\}$.  La couture d'anse à partir de ces strates permet d'obtenir une différentielle dans chaque composante connexe de $\omoduli[1](a;-b_{1},\dots,-b_{p})$ dont les résidus sont $(r_{1},\dots,r_{p}) \neq (0,\dots,0)$. Il reste donc à construire dans chaque composante connexe des strates $\omoduli[1](a;-b_{1},\dots,-b_{p})$ une différentielle dont tous les résidus sont nuls.

Dans ce cas, nous considérons la construction suivante. Pour tous les pôles, nous prenons une partie polaire de type $b_{i}$ associée aux vecteurs $(1;1)$. On colle le bord supérieur de $P_{i}$ au bord inférieur de $P_{i+1}$.
Il reste deux liens selles homologues que l'on relie par un cylindre. La surface obtenue possède les invariants locaux souhaités.

 \begin{figure}[htb]
 \center
\begin{tikzpicture}[scale=1.25, decoration={    markings,
    mark=at position 0.4 with {\arrow[very thick]{>}}}]
\begin{scope}[xshift=-6cm]
\fill[fill=black!10] (0,0) coordinate (Q) circle (1.1cm);

\draw[] (0,0) coordinate (Q) -- (1.1,0) coordinate[pos=.5](a);

\node[above] at (a) {$1$};
\node[below] at (a) {$2$};

\draw[] (Q) -- (-.8,0) coordinate (P) coordinate[pos=.5](c);

\fill (Q)  circle (1pt);

\fill (P) circle (1pt);
\node[above] at (c) {$a$};
\node[below] at (c) {$c$};

 \draw[postaction={decorate},red] (-.7,0) .. controls ++(90:.5)  and ++(90:.5) .. (-.9,0)  .. controls         ++(-90:.5) and ++(-90:.5) .. (-.7,0);
 \node[red] at (-1.1,0) {$\beta$};
\end{scope}

\begin{scope}[xshift=-3.7cm]
\fill[fill=black!10] (0,0) coordinate (Q) circle (1.1cm);

\draw[] (0,0) -- (1.1,0) coordinate[pos=.5](a);

\node[above] at (a) {$2$};
\node[below] at (a) {$1$};
\fill (Q)  circle (1pt);
\end{scope}

\begin{scope}[xshift=1cm]
\fill[fill=black!10] (0,0)  circle (1.1cm);

\draw[] (0,0) coordinate (Q) -- (1.1,0) coordinate[pos=.5](a);

\node[above] at (a) {$1$};
\node[below] at (a) {$2$};

\draw[] (0,0) -- (-.8,0) coordinate (P) coordinate[pos=.5](c);

\fill (Q)  circle (1pt);

\fill (P) circle (1pt);
\node[above] at (c) {$a$};
\node[below] at (c) {$c$};

 \draw[postaction={decorate},red] (-.7,0) .. controls ++(90:.5)  and ++(90:.5) .. (-.9,0)  .. controls         ++(-90:.5) and ++(-90:.5) .. (-.7,0);
  \node[red] at (-1.1,0) {$\beta$};
\end{scope}

\begin{scope}[xshift=3.3cm]
\fill[fill=black!10] (0,0) coordinate (Q) circle (1.1cm);
    
\draw[] (0,0) -- (1.1,0) coordinate[pos=.5](a);

\node[above] at (a) {$2$};
\node[below] at (a) {$1$};
\fill (Q)  circle (1pt);
\end{scope}

\begin{scope}[yshift=-2.7cm]
 \begin{scope}[xshift=-6cm]
\fill[fill=black!10] (0,0) coordinate (Q) circle (1.1cm);

\draw[] (0,0) coordinate (Q) -- (1.1,0) coordinate[pos=.5](a);

\node[above] at (a) {$3$};
\node[below] at (a) {$4$};

\draw[] (Q) -- (-.8,0) coordinate (P) coordinate[pos=.5](c);

\fill (Q)  circle (1pt);

\fill (P) circle (1pt);
\node[above] at (c) {$c$};
\node[below] at (c) {$b$};

 \draw[postaction={decorate},red] (-.7,0) .. controls ++(90:.5)  and ++(90:.5) .. (-.9,0)  .. controls         ++(-90:.5) and ++(-90:.5) .. (-.7,0);
\end{scope}

\begin{scope}[xshift=-3.7cm]
\fill[fill=black!10] (0,0) coordinate (Q) circle (1.1cm);

\draw[] (0,0) -- (1.1,0) coordinate[pos=.5](a);

\node[above] at (a) {$4$};
\node[below] at (a) {$3$};
\fill (Q)  circle (1pt);
\end{scope}

\begin{scope}[xshift=1cm]
\fill[fill=black!10] (0,0)  circle (1.1cm);

\draw[] (0,0) coordinate (Q) -- (-1.1,0) coordinate[pos=.5](a);

\node[above] at (a) {$4$};
\node[below] at (a) {$3$};

\draw[] (0,0) -- (.8,0) coordinate (P) coordinate[pos=.5](c);

\fill (Q)  circle (1pt);

\fill (P) circle (1pt);
\node[above] at (c) {$c$};
\node[below] at (c) {$b$};

 \draw[postaction={decorate},red] (.1,0) .. controls ++(90:.5)  and ++(90:.5) .. (-.1,0)  .. controls         ++(-90:.5) and ++(-90:.5) .. (.1,0);
\end{scope}

\begin{scope}[xshift=3.3cm]
\fill[fill=black!10] (0,0) coordinate (Q) circle (1.1cm);
    
\draw[] (0,0) -- (-1.1,0) coordinate[pos=.5](a);

\node[above] at (a) {$3$};
\node[below] at (a) {$4$};
\fill (Q)  circle (1pt);

 \draw[postaction={decorate},red] (.1,0) .. controls ++(90:.3)  and ++(90:.3) .. (-.1,0)  .. controls         ++(-90:.3) and ++(-90:.3) .. (.1,0);
\end{scope}
\end{scope}


\begin{scope}[xshift=-5.25cm,yshift=-1.7cm]
\coordinate (a) at (0,0);
\coordinate (b) at (.8,0);
\coordinate (c) at (.8,.8);
\coordinate (d) at (0,.8);
\filldraw[fill=black!10] (a) -- (b)coordinate[pos=.5](h) -- (c) coordinate[pos=.5](e) -- (d) coordinate[pos=.5](f) -- (a) coordinate[pos=.5](g);
\fill (a) circle (1pt);\fill (b) circle (1pt);\fill (c) circle (1pt);\fill (d) circle (1pt);
\node[above] at (h) {$b$};
\node[below] at (f) {$a$};
\node[right] at (e) {$5$};
\node[left] at (g) {$5$};

\draw[postaction={decorate},red] (.1,0) -- (.1,.8);
\draw[postaction={decorate},blue] (0,.4) -- (.8,.4);
  \node[blue] at (.7,.5) {$\alpha$};
\end{scope}

\begin{scope}[xshift=1.75cm,yshift=-1.7cm]
\coordinate (a) at (0,0);
\coordinate (b) at (.8,0);
\coordinate (c) at (.8,.8);
\coordinate (d) at (0,.8);
\filldraw[fill=black!10] (a) -- (b)coordinate[pos=.5](h) -- (c) coordinate[pos=.5](e) -- (d) coordinate[pos=.5](f) -- (a) coordinate[pos=.5](g);
\fill (a) circle (1pt);\fill (b) circle (1pt);\fill (c) circle (1pt);\fill (d) circle (1pt);
\node[above] at (h) {$b$};
\node[below] at (f) {$a$};
\node[right] at (e) {$5$};
\node[left] at (g) {$5$};

\draw[postaction={decorate},red] (.1,0) -- (.1,.8);
\draw[postaction={decorate},blue] (0,.4) -- (.8,.4);
  \node[blue] at (.7,.5) {$\alpha$};
\end{scope}
\end{tikzpicture}
\caption{Une différentielle de la strate $\omoduli[1](6;-3,-3)$ dont les résidus sont $(0,0)$  et de nombre de rotation $1$ à gauche et $3$ à droite.} \label{fig:casgeneralgenreunbis}
\end{figure}

Une base de l'homologie est donnée par une géodésique périodique $\alpha$ du cylindre (donc d'indice zéro) et le lacet $\beta$ suivant. Il coupe $\alpha$ dans le cylindre puis le lien selle au bord du domaine polaire $P_{p}$, puis tourne à gauche avant de ressortir de ce domaine polaire en coupant l'autre lien selle et ainsi de suite. Le cas de la strate $\omoduli[1](6;-3,-3)$ est illustré dans la figure~\ref{fig:casgeneralgenreunbis}.
Remarquons que changer le type $\tau$ d'une partie polaire change d'autant l'indice de $\beta$. On peut donc obtenir pour $\beta$ tous les indices  dans $J=\left[ p,-p+\sum_{i=1}^{p} b_{i}\right]$.   \`A moins que la totalité des pôles ne soient d'ordre~$-2$, on obtient ainsi toutes les composantes connexes de la strate. En effet, si $\mu=(a;-3,-2,\dots,-2)$, alors la strate est connexe. Il suffit donc de montrer que la longueur de l'intervalle $J$ est supérieur ou égale à  $\min_{i} b_{i}$, si $(b_{1},\dots,b_{p})\neq (2,\dots,2)$ et $(b_{1},\dots,b_{p})\neq (3,2,\dots,2)$. Cette inégalité est clairement satisfaite en dehors du cas $p=2$ et $b_{1}=b_{2}=3$ (cas déjà considéré plus haut).

\smallskip
\par
Dans une strate minimale avec uniquement des pôles d'ordre~$-2$, il y a exactement deux composantes connexes. La construction qui précède ne permet d'obtenir que la composante dont le nombre de rotation a la même parité que $p$.
On propose alors une deuxième construction. On prend $p-1$ parties polaires et on les colle  comme précédemment. La dernière partie polaire est associée aux vecteurs $(i,1;1,i)$.
On identifie les vecteurs $i$ entre eux et les deux autres bords comme précédemment. Cette construction est illustrée dans la figure~\ref{fig:casgeneralgenreunter} dans le cas de la strate $\omoduli[1](4;-2,-2)$.
Le lacet $\beta$ est défini comme précédemment et  a pour indice~$p$. Le lacet $\alpha$ connecte le milieu de des segments $v$ sans sortir du domaine polaire de~$P_{p}$. Son indice est donc~$1$. On construit une différentielle avec n'importe quels résidus dans la composante connexe dont le nombre de rotation est $1$.

\begin{figure}[htb]
 \center
\begin{tikzpicture}[scale=1.5, decoration={    markings,
    mark=at position 0.5 with {\arrow[very thick]{>}}}]
\begin{scope}[xshift=1cm,yshift=0cm]
\fill[fill=black!10] (0,0) coordinate (Q) circle (1.3cm);
\coordinate (a) at (-.5,-.5);
\coordinate (b) at (.5,-.5);
\coordinate (c) at (.5,.5);
\coordinate (d) at (-.5,.5);
\fill (a)  circle (2pt);
\fill (b) circle (2pt);
\fill (c)  circle (2pt);
\fill (d) circle (2pt);
\fill[fill=white] (a)  -- (b) -- (c) -- (d) -- cycle;
\draw (a)  -- (b)coordinate[pos=.5](e) -- (c)coordinate[pos=.5](f) -- (d)coordinate[pos=.5](g) -- (a)coordinate[pos=.5](h);

\node[below] at (e) {$2$};
\node[above] at (g) {$3$};
\node[right] at (f) {$1$};
\node[left] at (h) {$1$};

 \draw[postaction={decorate},red] (-.4,.5) .. controls ++(90:.6)  and ++(90:.5) .. (-.8,0)  .. controls         ++(-90:.5) and ++(-90:.6) .. (-.4,-.5);
 \node[red] at (-.97,0) {$\beta$};
  \draw[postaction={decorate},blue] (-.5,.35) .. controls ++(180:.7)  and ++(180:.5) .. (0,1)  .. controls         ++(0:.5) and ++(0:.7) .. (.5,.35);
   \node[blue] at (.05,1.1) {$\alpha$};
\end{scope}

\begin{scope}[xshift=3.7cm,yshift=0cm]
\fill[fill=black!10] (0,0) coordinate (Q) circle (1.1cm);
\coordinate (a) at (-.5,0);
\coordinate (b) at (.5,0);
\fill (a)  circle (2pt);
\fill (b) circle (2pt);
\draw (a) -- (b)coordinate[pos=.5](d);

\node[above] at (d) {$2$};
\node[below] at (d) {$3$};

 \draw[postaction={decorate},red] (-.4,0) .. controls ++(90:.5)  and ++(90:.5) .. (-.7,0)  .. controls         ++(-90:.5) and ++(-90:.5) .. (-.4,0);
\end{scope}

\end{tikzpicture}
\caption{Une différentielle de $\omoduli[1](4;-2,-2)$ avec résidus $(0,0)$ dont le nombre de rotation est $1$.} \label{fig:casgeneralgenreunter}
\end{figure}

Il reste le cas des composantes connexes de $\omoduli[1](2p;(-2^{p}))$ avec $p$ impair pour lesquelles le nombre de rotation est $2$. On reprend la construction précédente pour les $p-2$ premiers pôles. On associe au pôle $P_{p-1}$ partie polaire est associée aux vecteurs $(i,1;1,i)$. On associe à~$P_{p}$ la partie polaire associée à $(i;i)$. On identifie les bords comme précédemment. Cette construction est illustrée dans la figure~\ref{fig:casgeneralgenreuncuar} dans le cas de la strate $\omoduli[1](6;-2,-2,-2)$  Ainsi, les lacets analogues à ceux de la construction précédente auront  pour indices respectifs~$p-1$ et~$2$. Le nombre de rotation de la surface est donc~$2$.
\begin{figure}[htb]
 \center
\begin{tikzpicture}[scale=1.5, decoration={    markings,
    mark=at position 0.5 with {\arrow[very thick]{>}}}]
\begin{scope}[xshift=1cm,yshift=0cm]
\fill[fill=black!10] (0,0) coordinate (Q) circle (1.3cm);
\coordinate (a) at (-.5,-.5);
\coordinate (b) at (.5,-.5);
\coordinate (c) at (.5,.5);
\coordinate (d) at (-.5,.5);
\fill (a)  circle (2pt);
\fill (b) circle (2pt);
\fill (c)  circle (2pt);
\fill (d) circle (2pt);
\fill[fill=white] (a)  -- (b) -- (c) -- (d) -- cycle;
\draw (a)  -- (b)coordinate[pos=.5](e) -- (c)coordinate[pos=.5](f) -- (d)coordinate[pos=.5](g) -- (a)coordinate[pos=.5](h);

\node[below] at (e) {$2$};
\node[above] at (g) {$3$};
\node[right] at (f) {$b$};
\node[left] at (h) {$a$};

 \draw[postaction={decorate},red] (-.4,.5) .. controls ++(90:.6)  and ++(90:.5) .. (-.8,0)  .. controls         ++(-90:.5) and ++(-90:.6) .. (-.4,-.5);
 \node[red] at (-.97,0) {$\beta$};
  \draw[postaction={decorate},blue] (-.5,.35) .. controls ++(180:.7)  and ++(180:.5) .. (0,1)  .. controls         ++(0:.5) and ++(0:.7) .. (.5,.35);
   \node[blue] at (.05,1.1) {$\alpha$};
\end{scope}

\begin{scope}[xshift=3.7cm,yshift=0cm]
\fill[fill=black!10] (0,0) coordinate (Q) circle (1.1cm);
\coordinate (a) at (-.5,0);
\coordinate (b) at (.5,0);
\fill (a)  circle (2pt);
\fill (b) circle (2pt);
\draw (a) -- (b)coordinate[pos=.5](d);

\node[above] at (d) {$2$};
\node[below] at (d) {$3$};

 \draw[postaction={decorate},red] (-.4,0) .. controls ++(90:.5)  and ++(90:.5) .. (-.7,0)  .. controls         ++(-90:.5) and ++(-90:.5) .. (-.4,0);
\end{scope}

\begin{scope}[xshift=-1.7cm,yshift=0cm]
\fill[fill=black!10] (0,0) coordinate (Q) circle (1.1cm);
\coordinate (a) at (0,-.5);
\coordinate (b) at (0,.5);
\fill (a)  circle (2pt);
\fill (b) circle (2pt);
\draw (a) -- (b)coordinate[pos=.5](d);

\node[left] at (d) {$b$};
\node[right] at (d) {$a$};

 \draw[postaction={decorate},blue] (0,.4) .. controls ++(180:.5)  and ++(180:.5) .. (0,.7)  .. controls         ++(0:.5) and ++(0:.5) .. (0,.4);
\end{scope}
\end{tikzpicture}
\caption{Une différentielle de $\omoduli[1](6;-2,-2,-2)$ avec résidus $(0,0,0)$ dont le nombre de rotation est $2$.} \label{fig:casgeneralgenreuncuar}
\end{figure}
\end{proof}

\subsection{Différentielles en genre supérieur ou égal à $2$}
\label{sec:ggeq2}

Les composantes connexes des strates ont été classifiées par Boissy dans le théorème 1.2 de \cite{Bo}. La preuve du théorème~\ref{thm:ggeq1abel} dans le cas $g\geq2$ n'utilise pas la description des composantes connexes mais repose sur le fait que chaque composante connexe peut s'obtenir via les opérations de couture d'anse et éclatement d'un zéro.

\begin{proof}[Preuve du théorème~\ref{thm:ggeq1abel}  en genre $g\geq2$]
Commençons par le cas des strates avec un unique zéro. Plus précisément, fixons une partition $\mu:=(a;-b_{1},\dots,-b_{p};(-1^{s}))$ de $2g-2$ avec $g\geq2$ et un uplet $r$ dans l'espace résiduel $\espres(\mu)$. Par le résultat de la section~\ref{sec:g1}, chaque composante connexe de la strate $\omoduli[1](a-2g;-b_{1},\dots,-b_{p};(-1^{s}))$ contient une différentielle $\omega$ dont les résidus sont $r$.  De plus, la proposition~6.1 de \cite{Bo} montre que chaque composante connexe d'une strate de genre $g \geq 2$ peut être obtenue par l'ajout d'une anse à un zéro d'une différentielle de genre $g-1$. La proposition~\ref{prop:attachanse} montre que la couture d'anse ne modifie pas les résidus. On obtient donc une différentielle dans la strate $\omoduli[g](\mu)$ dont les résidus sont~$r$ en cousant successivement $g-1$ anses à l'unique zéro de~$\omega$.

Nous considérons maintenant le cas des strates avec $n\geq 2$ zéros. Fixons une partition quelconque $\mu:=(a_{1},\ldots,a_{n};-b_{1},\dots,-b_{p};(-1^{s}))$ de $2g-2$ avec $g\geq2$. 
La proposition~7.1 de \cite{Bo} implique que l'éclatement du zéro de différentielles $\omoduli[g](\sum a_{i};-b_{1},\dots,-b_{p};(-1^{s}))$ permet d'obtenir une différentielle dans chaque composante connexe de $\omoduli(\mu)$. La proposition~\ref{prop:eclatZero} montre que cette opération ne modifie pas les résidus aux pôles. La surjectivité de la restriction  de l'application résiduelle $\appres(\mu)$ à chaque composante connexe est donc induite par la surjectivité de l'application résiduelle sur chaque composantes connexes de la strate $\omoduli[g](\sum a_{i};-b_{1},\dots,-b_{p};(-1^{s}))$ que nous avons montré au paragraphe précédent.
\end{proof}

%
%

\section{Applications}
\label{sec:appli}

Cette section est dédiée à la preuve des applications énoncées dans la section~\ref{sec:apliintro}. La section~\ref{sec:limwei} donne la preuve de la proposition~\ref{prop:limWei} sur les points de Weierstra\ss~et la section~\ref{sec:cylindres} la preuve de la proposition~\ref{prop:cylindres} sur les cylindres dans les surfaces de translation.

\smallskip
\par
\subsection{Limites des points de Weierstra\ss}
\label{sec:limwei}

L'étude des limites des points de \Weierstrass~dans la compactification de Deligne-Mumford a pris une grande ampleur grâce aux travaux d'Eisenbud et Harris sur les séries linéaires limites (voir e.g. \cite{eiha}). L'une des limites de leur méthode est de se restreindre aux courbes de type compact. Esteves et Medeiros l'ont étendue aux courbes ayant deux composantes dans \cite{esme}. Nos résultats et la description de la compactification de la variété d'incidence de \cite{BCGGM} permettent en théorie une description complète des limites de points de \Weierstrass~dans~$\barmoduli[g,1]$.  Nous illustrons cela dans les cas de la proposition~\ref{prop:limWei}.
\smallskip
\par
Nous montrons tout d'abord que l'adhérence du lieu de \Weierstrass~ne rencontre pas celui des courbes stables $X$ où $g$ courbes elliptiques sont attachées à un $\PP^{1}$ contenant le point marqué (voir le théorème 3.1 de \cite{eiha}). Ces courbes sont représentées à gauche de la figure~\ref{fig:courbesWei}.

 \begin{figure}[htb]
\begin{tikzpicture}[scale=1.8]
\begin{scope}[xshift=-2.5cm]
\draw (-4,0.1) coordinate (x1) .. controls (-3.8,-.3) and (-4.2,-.6) .. (-4,-1);
\draw (-3.4,0.1) coordinate (x4) .. controls (-3.2,-.3) and (-3.6,-.6) .. (-3.4,-1);
\draw (-2,0.1)  coordinate (x2).. controls (-1.8,-.3) and (-2.2,-.6) .. (-2,-1);

\draw (-4.2,-.8) .. controls (-3.3,-1) and (-2.6,-.6) .. (-1.8,-.8) coordinate (x3)
coordinate [pos=.6] (z);
\fill (z) circle (1pt);\node [below] at (z) {$z$};
\node [above] at (x1) {$X_{1}$};\node [above] at (x2) {$X_{g}$};\node [right] at (x3) {$\PP^{1}$};\node [above] at (x4) {$X_{2}$};
\node at (-2.7,-.3) {$\dots$};
\end{scope}

\begin{scope}[xshift=2.5cm]
\draw (-4.8,-1) .. controls (-4.6,.5) and (-4.2,.5) .. (-4,-1) coordinate[pos=.5] (x1);
\draw (-3.4,0.1) coordinate (x4) .. controls (-3.2,-.3) and (-3.6,-.6) .. (-3.4,-1);
\draw (-2,0.1)  coordinate (x2).. controls (-1.8,-.3) and (-2.2,-.6) .. (-2,-1);

\draw (-5,-.8) .. controls (-3.7,-1) and (-2.9,-.6) .. (-1.8,-.8) coordinate (x3)
coordinate [pos=.7] (z);
\fill (z) circle (1pt);\node [below] at (z) {$z$};
\node [above] at (x1) {$X_{0}$};\node [above] at (x2) {$X_{g-2}$};\node [right] at (x3) {$\PP^{1}$};\node [above] at (x4) {$X_{1}$};
\node at (-2.7,-.3) {$\dots$};
\end{scope}
\end{tikzpicture}
\caption{Les courbes pointées considérées dans la  proposition~\ref{prop:limWei}. Les courbes $X_{i}$ sont de genre~$1$.}
\label{fig:courbesWei}
\end{figure}

Par \cite[Section~3.6]{BCGGM}, l'adhérence du lieu de \Weierstrass~dans $\barmoduli[g,1]$ coïncide avec la projection de la compactification de la variété d'incidence de $\omoduli[g](g,1,\dots,1)$. D'après le théorème~1.3 de l'article loc. cit., il suffit de montrer qu'il n'existe pas de différentielle entrelacée lissable sur une courbe semi-stablement équivalente à~$X$.

Soit~$\xi$ une différentielle entrelacée sur (une courbe semi-stablement équivalente à)~$X$. La restriction~$\xi_{0}$ de $\xi$ à $\PP^{1}$ possède un zéro d'ordre supérieur ou égal à $g$. De plus,  $\xi_{0}$ possède une singularité d'ordre supérieur ou égal à $-2$ (et distinct de $-1$) aux points nodaux de~$\PP^{1}$. Notons~$-b_{i}$ l'ordre des pôles aux points nodaux. Comme la différentielle $\xi_{0}$ possède au plus $g$ pôles et que ceux-ci sont d'ordre $-2$, on obtient l'inégalité $\sum (b_{i}-1) \leq g$. Donc la différentielle~$\xi_{0}$ sur $\PP^{1}$ vérifie l'inégalité~\eqref{eq:genrezeroresiduzerofr}. Le théorème~\ref{thm:geq0keq1} implique qu'au moins deux pôles possèdent des résidus non nuls. La {\em condition résiduelle globale} de la définition 1.2 de \cite{BCGGM} n'est donc pas satisfaite et~$\xi$ n'est pas lissable.

Maintenant, nous montrons que l'adhérence du lieu de \Weierstrass~coupe le lieu de $\barmoduli[g,1]$ donné de la façon suivante. Ces courbes sont formées d'un $\PP^{1}$ contenant le point marqué attaché à $g-2$ courbes elliptiques $X_{1},\dots,X_{g-2}$ par un unique point et à une courbe elliptique~$X_{0}$ par deux points. Elles sont représentées à droite de la figure~\ref{fig:courbesWei}.

Il suffit construire une différentielle entrelacée lissable de type $(g,(1^{g-2}))$ sur une de ces courbes. Sur toutes les courbes elliptiques, on prend la différentielle holomorphe. Sur la courbe projective, on prend une différentielle dans $\omoduli[0](g,(1^{g-2});(-2^{g}))$ avec $g-2$ résidus nuls. Une telle différentielle existe par le théorème~\ref{thm:geq0keq1}. On colle alors la courbe elliptique~$X_{0}$ aux deux pôles dont les résidus ne sont pas nuls. Les autres courbes elliptiques sont collées aux pôles dont les résidus sont nuls. Le théorème 1.3 de \cite{BCGGM} implique que cette différentielle entrelacée est lissable.

\smallskip
\par
\subsection{Cylindres dans une surface plate}
\label{sec:cylindres}
Un {\em cylindre} dans une surface de translation est un cylindre composé de géodésiques fermées parallèles et dont le bord est formé de connexions de selles.
Naveh a montré dans \cite{Na} que le nombre maximal de cylindres disjoints dans une surface de la strate $S:=\omoduli(a_{1},\dots,a_{n})$ est $g+n-1$.
\smallskip
\par
Nous décrivons les périodes possibles des circonférences de ces cylindres, prouvant la proposition~\ref{prop:cylindres}. On fixe $(\lambda_{1},\dots,\lambda_{t})\in (\mathbb{C}^{\ast}/\{\pm1\})^{t}$ pour le reste de cette section.
\par
Supposons qu'il existe une différentielle $\omega$ de $S$ qui possède une famille de~$t$ cylindres disjoints de circonférences respectives $\lambda_{1},\dots,\lambda_{t}$. Nous montrons l'existence d'une différentielle stable dont les zéros sont d'ordres $(a_{1},\dots,a_{n})$ avec des pôles simples aux nœuds dont les résidus sont $\pm \lambda_{i}$.  Coupons $\omega$ le long d'une d'une géodésique périodique dans chaque cylindre. Nous pouvons alors remplacer les deux demi-cylindres obtenus par deux demi-cylindres infinis de même circonférence. On obtient donc une différentielle entrelacée avec des pôles simples aux nœuds. De plus, les résidus de ces pôles sont égaux à plus ou moins la circonférence des cylindres. On en déduit le sens direct de la proposition~\ref{prop:cylindres}.
\par
La direction réciproque est une application directe du lissage des nœuds des différentielles stables avec pôles simples aux nœuds (voir le lemme~\ref{lem:lisspolessimples}).
\smallskip
\par
Ce résultat permet de comprendre parfaitement les circonférences des cylindres sur une surface de translation pour $t <g$ et dans le cas minimal pour $t=g$.
\begin{cor}\label{cor:cylindres}
%
 Tous les $t$-uplets de $\CC^{\ast}/\{\pm1\}$ avec $t < g$ sont  réalisables comme périodes des circonférences de cylindres disjoints d'une différentielle de $\omoduli(a_{1},\dots,a_{n})$.
 
 Un $g$-uplet de $\CC^{\ast}/\{\pm1\}$ est  constitué des périodes des circonférences de cylindres disjoints d'une différentielle de $\omoduli(2g-2)$ si et seulement si il n'est pas de la forme $c \cdot (\lambda_{1},\dots, \lambda_{g})$ avec $c$ dans $\CC^{\ast}$ et $(\lambda_{1},\dots,\lambda_{g})\in \NN$ tel que  $\sum_{i=1}^{g} \lambda_{i} \leq 2g-2$.
\end{cor}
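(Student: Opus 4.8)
Le plan est de déduire les deux assertions de la proposition~\ref{prop:cylindres} : celle-ci ramène la réalisation de $t$ cylindres disjoints de circonférences $\lambda_1,\dots,\lambda_t$ dans $\omoduli(a_1,\dots,a_n)$ à l'existence d'une différentielle stable à $t$ nœuds, à pôles simples de résidus $\pm\lambda_i$ aux nœuds, et dont la partie lisse porte les zéros d'ordres $(a_1,\dots,a_n)$. Tout revient donc à décider, à l'aide des théorèmes~\ref{thm:ggeq1abel} et~\ref{thm:geq0keq1}, de l'existence d'une telle différentielle stable.

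Pour la première assertion (on peut supposer $1\leq t<g$), je construirais cette différentielle stable sur une unique composante lisse de genre $g-t\geq1$. On lui demande d'appartenir à la strate $\omoduli[g-t](a_1,\dots,a_n;(-1^{2t}))$ et d'avoir pour résidus aux $2t$ pôles simples le $2t$-uplet $(\lambda_1,-\lambda_1,\dots,\lambda_t,-\lambda_t)$ ; celui-ci étant de somme nulle, la surjectivité de l'application résiduelle en genre $\geq1$ (théorème~\ref{thm:ggeq1abel}) en assure l'existence. On forme alors $t$ nœuds en recollant, pour chaque $i$, le pôle de résidu $\lambda_i$ à celui de résidu $-\lambda_i$ ; la condition des résidus assortis est vérifiée et la partie lisse porte exactement les zéros voulus, d'où la conclusion par la proposition~\ref{prop:cylindres}.

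Pour la seconde assertion ($t=g$, $n=1$), l'étape décisive est de montrer que toute différentielle stable donnée par la proposition~\ref{prop:cylindres} est portée par un unique $\PP^1$ muni de $g$ nœuds joignant chacun deux de ses points. En effet, une composante ne contenant pas le zéro d'ordre $2g-2$ porte une différentielle sans zéro, donc $2g_v-2=-e_v$ où $e_v\geq1$ est le nombre de demi-nœuds qu'elle porte ; ceci force $g_v=0$ et $e_v=2$, c'est-à-dire une composante rationnelle à exactement deux points spéciaux, qui est instable. Il n'y a donc qu'une seule composante, de genre $g_0$, portant le zéro et les $2g$ demi-nœuds, et le comptage des degrés $2g_0-2=(2g-2)-2g=-2$ impose $g_0=0$. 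L'existence d'une différentielle stable équivaut ainsi à l'appartenance du $2g$-uplet $(\lambda_1,-\lambda_1,\dots,\lambda_g,-\lambda_g)$ à l'image de $\appres[0](2g-2;(-1^{2g}))$.

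Il resterait à appliquer le point~ii) du théorème~\ref{thm:geq0keq1} : ce $2g$-uplet est dans l'image si et seulement s'il n'est pas proportionnel à un uplet d'entiers premiers entre eux $(x_1,\dots,x_{s_1},-y_1,\dots,-y_{s_2})$ vérifiant $\sum x_i=\sum y_j\leq 2g-2$. Comme les résidus apparaissent ici par paires opposées, une telle proportionnalité force $s_1=s_2=g$ et équivaut exactement à l'écriture $\lambda_i=c\,n_i$ avec $c\in\CC^{\ast}$ et $n_i\in\NN$ de somme $\sum_i n_i\leq 2g-2$ (le facteur commun éventuel étant absorbé dans $c$), soit la forme exceptionnelle de l'énoncé. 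Le point qui me semble le plus délicat est précisément cette rigidité structurelle : c'est la stabilité de la courbe (ou, de façon géométrique, le fait qu'un cylindre est bordé par des connexions de selles, donc par le zéro) qui exclut les modèles comportant des composantes rationnelles superflues et ramène le cas maximal $t=g$ au cas du genre zéro déjà traité.
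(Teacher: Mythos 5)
Your proposal is correct and takes essentially the same route as the paper: Proposition~\ref{prop:cylindres} as the bridge to stable differentials, Theorem~\ref{thm:ggeq1abel} plus gluing of opposite-residue simple poles on a single genus $g-t$ component for $t<g$, and reduction of the case $t=g$ to point ii) of Theorem~\ref{thm:geq0keq1} via irreducibility of the underlying stable curve. Your degree-count-and-stability argument simply fills in the detail behind the paper's one-line assertion that every component of such a stable differential must carry a zero.
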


Avant de passer à la preuve de ce résultat nous faisons une remarque sur le cas $t=g$ avec $n\geq2$ zéros.
\begin{rem}
 On considère une strate $\omoduli(a_{1},\dots,a_{n})$.  En général, il peut exister une différentielle qui possède des cylindres disjoints de circonférences $(\lambda_{1},\dots,\lambda_{g})\in \NN$ avec $\sum_{i=1}^{g} \lambda_{i} \leq \max (a_{1},\dots,a_{n})$. Par exemple, il existe une différentielle avec des cylindres disjoints de circonférences $(1,1,1,1)$ dans la strate $\omoduli[4](4,1,1)$. Une telle différentielle peut être obtenue en lissant la différentielle stable suivante. Elle est formée à partir de deux différentielles. La première est une différentielle de la strate $\omoduli[0](1,1;-1,-1,-1,-1)$ dont les résidus sont égaux à $(1,1,-1,-1)$. La seconde est une différentielle de $\omoduli[1](4;-1,-1,-1,-1)$ dont les résidus sont $(1,1,-1,-1)$. Pour chaque différentielle, on colle deux pôles simples de résidus opposés entre eux. Puis on colle les deux pôles simples restant d'une différentielle aux deux pôles simples restant de l'autre différentielle.
\end{rem}

\begin{proof}
 Si $t<g$, il existe une différentielle stable sur une courbe stable irréductible qui satisfait les hypothèses de la proposition~\ref{prop:cylindres}. En effet,  le théorème~\ref{thm:ggeq1abel} donne une différentielle $\omega$ dans la strate $\omoduli[g-t](a_{1},\dots,a_{n};(-1^{2t}))$ dont les résidus sont $\pm \lambda_{i}$. On forme la différentielle stable de la proposition~\ref{prop:cylindres} en collant deux pôles simples dont les résidus sont opposés.   
 \smallskip
 \par
 On considère maintenant le cas où $t=g$ et dans la strate $\omoduli(2g-2)$.  Supposons que le $g$-uplet n'est pas de la forme $c \cdot (\lambda_{1},\dots, \lambda_{g})$ avec $c$ dans $\CC^{\ast}$ et $(\lambda_{1},\dots,\lambda_{g})\in \ZZ$ tel que  $\sum_{i=1}^{g} \lambda_{i} \leq 2g-2$.  Le point ii) du théorème~\ref{thm:geq0keq1} donne l'existence d'une différentielle  dans $\omoduli[0](2g-2;(-1^{2g}))$ dont les résidus sont $\pm \lambda_{i}$. On forme la différentielle stable de la proposition~\ref{prop:cylindres} en collant deux pôles simples dont les résidus sont opposés.  Réciproquement, supposons que le $g$-uplet n'est pas de la forme donné par le corollaire~\ref{cor:cylindres}. Notons tout d'abord que toutes les différentielles stables de la proposition~\ref{prop:cylindres} sont irréductibles dans ce cas. En effet, chaque composante contient au moins un zéro de la différentielle. En normalisant la courbe stable sous jacente on obtient une différentielle de $\omoduli[0](2g-2;(-1^{2g}))$ dont les résidus sont $\pm \lambda_{i}$. Cela contredit le point ii) du théorème~\ref{thm:geq0keq1}.
\end{proof}

\smallskip
\par
Pour terminer, nous montrons la proposition~\ref{prop:cylcc} qui affirme  l'existence une différentielle avec $g+n-1$ cylindres disjoints dans chaque composante connexe des strates de différentielles holomorphes. La classification des composantes connexes est donnée dans les théorèmes 1 et 2 de \cite{kozo}. Nous rappelons simplement ici que les composantes connexes sont classifiées par hyperellipticité et parité de la structure spin. 

La preuve de la proposition~\ref{prop:cylcc} suit le schéma suivant. Dans le cas des strates minimales, i.e. avec $n=1$, nous construisons une différentielle stable sur une courbe algébrique irréductible de genre géométrique~$0$ avec $g$ nœuds dont le lissage par le lemme~\ref{lem:lisspolessimples} donne une différentielle dans la composante souhaitée. Chaque nœud correspond à un cylindre sur la différentielle lissée. De cette manière, on obtient la borne souhaitée dans toutes les composantes de la strate minimale. Les strates non minimales se traitent en dégénérant les différentielles stables du cas minimal, puis en lissant ces différentielles.

 Commençons par la strate $\omoduli[3](4)$ qui possède deux composantes: l'une hyperelliptique et l'autre impaire. Considérons trois nombres réels strictement positifs $r_{1},r_{2},r_{3}$ non commensurables entre eux et les parties polaires d'ordre $1$ associées à $(\pm r_{i};\emptyset)$. Collons successivement sur un segment  les cylindres associés à $r_{i}$  avec $i$ croissant. Sur la partie inférieure du segment, on colle successivement les cylindres associés à $-r_{3}$, $-r _{2}$ et $-r_{1}$ dans le cas hyperelliptique et $-r_{2}$, $-r_{3}$ et $-r_{1}$ dans le cas impair. Les différentielles possédant les propriétés souhaitées sont obtenues en lissant les différentielles stables construites en collant les points à l'infini des cylindres de même circonférence. 
%
%
%

Dans le cas $g\geq4$ nous construisons une différentielle  qui contient~$g$ cylindres dans chacune des trois composantes de $\omoduli(2g-2)$. Dans la suite, nous décrirons uniquement les normalisations des différentielles stables dont le lissage possède ces propriétés. Considérons~$g$ nombres réels strictement positifs $r_{i}$ non commensurables entre eux. Prenons les~$2g$ parties polaires d'ordre~$1$ associées respectivement à $(\pm r_{i};\emptyset)$. On colle successivement sur un segment  les $g$ cylindres positifs associés à~$r_{i}$  avec $i$ croissant. Sur la partie inférieure de ce segment on colle successivement les cylindres associés à $-r_{i}$ avec $i$ décroissant dans le cas hyperelliptique. Pour les composantes de parité, nous collons les cylindres associés à $-r_{i}$ dans les ordres $g,1,2,\dots,g-1$ et $g,1,2,\dots,g-3,g-1,g-2$. Ces deux constructions sont illustrées dans la figure~\ref{fig:compconcyl}.

\begin{figure}[htb]
\center
\begin{tikzpicture}[scale=1.2, decoration={    markings,
    mark=at position 0.7 with {\arrow[very thick]{>}}}]
    
    \begin{scope}[xshift=-4cm]
\fill[fill=black!10] (-2,-.9)  -- (2,-.9)-- (2,.9)-- (-2,.9) -- cycle;

\coordinate (a0) at (-2,0);
\coordinate (a1) at (-1,0);
\coordinate (a2) at (.2,0);
\coordinate (a3) at (1.3,0);
\coordinate (a4) at (2,0);
\coordinate (b0) at (-2,0);
\coordinate (b1) at (-1.3,0);
\coordinate (b2) at (-.3,0);
\coordinate (b3) at (.9,0);
\coordinate (b4) at (2,0);

\foreach \i in {0,...,4}
\fill (a\i)  circle (1pt);
   \foreach \i in {0,...,4}
  \draw (a\i) -- ++(0,.9);
   \foreach \i in {0,...,4}
  \draw (b\i) -- ++(0,-.9);
   \foreach \i in {1,2,3}
   \fill (b\i)  circle (1pt);
     
     \node at (-1.5,1) {$1$};
     \node at (-.4,1) {$2$};
     \node at (.75,1) {$3$};
     \node at (1.65,1) {$4$};
       \node at (-.8,-1) {$1$};
     \node at (.3,-1) {$2$};
     \node at (1.45,-1) {$3$};
     \node at (-1.65,-1) {$4$};
     
 \draw[postaction={decorate},red] (-.8,-.9) .. controls ++(90:.6)  and ++(-45:.5) .. (-1.15,0)  .. controls         ++(135:.5) and ++(-90:.6) .. (-1.5,.9);    
\draw[postaction={decorate},red] (.3,-.9) .. controls ++(90:.6)  and ++(-45:.5) .. (-.05,0)  .. controls         ++(135:.5) and ++(-90:.6) .. (-.4,.9);
\draw[postaction={decorate},red] (1.45,-.9) .. controls ++(90:.6)  and ++(-45:.5) .. (1.1,0)  .. controls         ++(135:.5) and ++(-90:.6) .. (.75,.9);

 \draw[postaction={decorate},red] (-1.65,-.9) .. controls ++(90:.6)  and ++(0:.3) .. (-2,.1);    
  \draw[red] (-1,.1) .. controls ++(180:.1)  and ++(180:.1) .. (-1,-.1)  .. controls         ++(0:.1) and ++(0:.1) .. (-1,.1);
  \draw[red] (.2,.1) .. controls ++(180:.1)  and ++(180:.1) .. (.2,-.1)  .. controls         ++(0:.1) and ++(0:.1) .. (.2,.1);
    \draw[postaction={decorate},red] (1.3,.1) .. controls ++(180:.1)  and ++(180:.1) .. (1.3,-.1)  .. controls         ++(0:.2) and ++(-90:.5) .. (1.65,.9);
\end{scope}

  \begin{scope}[xshift=3cm]
\fill[fill=black!10] (-2,-.9)  -- (2,-.9)-- (2,.9)-- (-2,.9) -- cycle;

\coordinate (a0) at (-2,0);
\coordinate (a1) at (-1,0);
\coordinate (a2) at (.2,0);
\coordinate (a3) at (1.3,0);
\coordinate (a4) at (2,0);
\coordinate (b0) at (-2,0);
\coordinate (b1) at (-1.3,0);
\coordinate (b2) at (-.3,0);
\coordinate (b3) at (.8,0);
\coordinate (b4) at (2,0);

\foreach \i in {0,...,4}
\fill (a\i)  circle (1pt);
   \foreach \i in {0,...,4}
  \draw (a\i) -- ++(0,.9);
   \foreach \i in {0,...,4}
  \draw (b\i) -- ++(0,-.9);
   \foreach \i in {1,2,3}
   \fill (b\i)  circle (1pt);
     
     \node at (-1.5,1) {$1$};
     \node at (-.4,1) {$2$};
     \node at (.75,1) {$3$};
     \node at (1.65,1) {$4$};
       \node at (-.8,-1) {$1$};
     \node at (.35,-1) {$3$};
     \node at (1.4,-1) {$2$};
     \node at (-1.65,-1) {$4$};
     
 \draw[postaction={decorate},red] (-.8,-.9) .. controls ++(90:.6)  and ++(-45:.5) .. (-1.15,0)  .. controls         ++(135:.5) and ++(-90:.6) .. (-1.5,.9);    
\draw[postaction={decorate},red] (.35,-.9) .. controls ++(90:.5)  and ++(-90:.5) .. (.75,.9);

\draw[postaction={decorate},red] (1.4,-.9) .. controls ++(90:.5)  and ++(0:.2) .. (.8,.1)  .. controls         ++(180:.1) and ++(180:.1) .. (.8,-.1);
\draw[postaction={decorate},red] (-.3,-.1) .. controls ++(0:.3)   and ++(-90:.5) .. (-.4,.9);

 \draw[postaction={decorate},red] (-1.65,-.9) .. controls ++(90:.6)  and ++(0:.3) .. (-2,.1);    
  \draw[red] (-1,.1) .. controls ++(180:.1)  and ++(180:.1) .. (-1,-.1)  .. controls         ++(0:.1) and ++(0:.1) .. (-1,.1);
  \draw[red] (.2,.1) .. controls ++(180:.1)  and ++(180:.1) .. (.2,-.1)  .. controls         ++(0:.1) and ++(0:.1) .. (.2,.1);
    \draw[postaction={decorate},red] (1.3,.1) .. controls ++(180:.1)  and ++(180:.1) .. (1.3,-.1)  .. controls         ++(0:.2) and ++(-90:.5) .. (1.65,.9);
\end{scope}

\end{tikzpicture}
\caption{Différentielles stables dans le bord des composantes impaires et paires de la composante $\omoduli[4](6)$.} \label{fig:compconcyl}
\end{figure}

Les deux différentielles ainsi formées sont clairement non hyperelliptiques. Il reste donc à vérifier qu'elles sont de parités distinctes. Nous décrivons des lacets qui forment une base symplectique de l'homologie sur la différentielle lissée. Les $g$ premiers lacets $\alpha_{i}$ sont donnés par une géodésique périodique dans chaque cylindre associé à $r_{i}$. Les $g$ autres lacets~$\beta_{i}$ sont des courbes qui vont du cylindre de résidu $-r_{i}$ à celui de résidu $r_{i}$ en intersectant uniquement~$\alpha_{i}$.  Un choix possible pour les $\beta_{i}$ est représenté sur la figure~\ref{fig:compconcyl}.  Les indices des~$\alpha_{i}$ sont nuls. De plus, dans la première différentielle, on peut choisir pour $i\in\{1,\dots,g-1\}$ les $\beta_{i}$ tels qu'ils n'intersectent pas les demi-droites verticales représentées sur la figure~\ref{fig:compconcyl}. Dans la seconde différentielle, c'est possible pour $i\in\{ 1,\dots,g-3,g-1 \}$. Tous ces lacets sont d'indice nul. Dans les deux différentielles, pour joindre les cylindres d'indice $g$, on peut choisir un lacet d'indice $g-1$ comme représenté sur la figure~\ref{fig:compconcyl}. Enfin dans la seconde différentielle, on peut choisir $\beta_{g-2}$ d'indice $1$ comme représenté sur la figure~\ref{fig:compconcyl}. Donc les indices des courbes de ces deux familles diffèrent uniquement pour le lacet $\beta_{g-2}$. L'équation~(4) de \cite{kozo} de la parité d'une différentielle en fonction des indices des courbes d'une base symplectique implique que ces deux différentielles sont de parités distinctes.

Enfin nous montrons  qu'il existe une différentielle qui possède $g+n-1$ cylindres dans chaque composante connexe des strates avec $n\geq2$ zéros. Pour cela, nous dégénérons les différentielles stables du cas minimal de la façon suivante. Soit $a_{1}$ un zéro d'ordre inférieur ou égal à $g-1$ et maximal pour cette propriété. Nous choisissons sur les différentielles précédentes $a_{1}+1$ pôles consécutifs dont les résidus sont de même signe. Puis nous ajoutons un cylindre vertical fini entre ces cylindres et les autres cylindres de la différentielle. Enfin nous faisons tendre la hauteur de ce cylindre vers l'infini. Les graphes duaux de différentielles stables obtenues de manière similaire sont présentées dans la figure~\ref{fig:graphdeliant}. On considère maintenant le zéro d'ordre $a_{2}\leq a_{1}$ et maximal pour cette propriété. On fait la même construction sur la composante irréductible de la différentielle stable qui contient le zéro d'ordre $2g-2-a_{1}$. En effet, cette composante possède au moins $\tfrac{2g-2-a_{1}}{2} \geq a_{2}$ pôles dont les résidus sont de même signes. En itérant cette construction on obtient une différentielle stable au bord de chaque  composante connexe. De plus, elle préserve la parité (et l'hyperellipticité dans le cas $\omoduli(g-1,g-1)$)  des différentielles stables.  Comme cette opération ajoute un cylindre pour chaque zéro, la différentielle obtenue en lissant ces différentielles stables possède $g+n-1$ cylindres et appartient à la composante souhaitée.

\printbibliography

\end{document}